\definecolor{webcolor}{rgb}{0.8,0,0.2}
\definecolor{webbrown}{rgb}{.6,0,0}
\numberwithin{equation}{section}
\renewcommand{\AA}{\mathbb A}
\newcommand{\CC}{\mathbb C}
\newcommand{\FF}{\mathbb F}
\newcommand{\PP}{\mathbb P}
\newcommand{\QQ}{\mathbb Q}
\newcommand{\ZZ}{\mathbb Z} 
\newcommand{\Zhat}{\widehat\ZZ}
\newcommand{\calG}{\mathcal G}  \newcommand{\calF}{\mathcal F}
\newcommand{\calH}{\mathcal H}
\newcommand{\calJ}{\mathcal J}
\newcommand{\calS}{\mathcal S}
\newcommand{\calA}{\mathcal A}
\newcommand{\calX}{\mathcal X}
\newcommand{\p}{\mathfrak p}
\newcommand{\scrE}{\mathscr E}
\def\cyc{{\operatorname{cyc}}}
\def\ab{{\operatorname{ab}}}
\def\Spec{\operatorname{Spec}}
\def\Gal{\operatorname{Gal}}
\def \GL {\operatorname{GL}}
\def \SL {\operatorname{SL}}
\def \PSL {\operatorname{PSL}}
\def\Aut{\operatorname{Aut}}
\newcommand{\defi}[1]{\textsf{#1}} % for defined terms
\newcommand\blank[1]{}
\def\bbar#1{\setbox0=\hbox{$#1$}\dimen0=.2\ht0 \kern\dimen0 
\overline{\kern-\dimen0 #1}}
\newcommand{\Qbar}{{\overline{\mathbb Q}}} 
\newcommand{\Kbar}{{\bbar{K}}}
\newtheorem{thm}{Theorem}[section]
\newtheorem{lemma}[thm]{Lemma}
\newtheorem{prop}[thm]{Proposition}
\newtheorem{conj}[thm]{Conjecture}
\theoremstyle{definition}
\theoremstyle{remark}
\newtheorem{remark}[thm]{Remark}
\newtheorem{example}[thm]{Example}
\newenvironment{romanenum}{\hfill \begin{enumerate} }{\end{enumerate}}
\newenvironment{alphenum}{\hfill \begin{enumerate} }{\end{enumerate}}
\begin{document}

\title{Open image computations for elliptic curves over number fields}
\subjclass[2020]{Primary 11G05; Secondary 11F80}
%% MSC-class: 11G05 (Primary) 11F80 (Secondary)
%%\keywords{}
\author{David Zywina}
\address{Department of Mathematics, Cornell University, Ithaca, NY 14853, USA}
\email{zywina@math.cornell.edu}
%\urladdr{http://www.math.cornell.edu/~zywina}

\begin{abstract}
For a non-CM elliptic curve $E$ defined over a number field $K$, the Galois action on its torsion points gives rise to a Galois representation $\rho_E\colon \Gal(\Kbar/K)\to \GL_2(\Zhat)$ that is unique up to isomorphism.   A renowned theorem of Serre says that the image of $\rho_E$ is an open, and hence finite index, subgroup of $\GL_2(\Zhat)$.  In an earlier work of the author, an algorithm was given that computed the image of $\rho_E$  up to conjugacy in $\GL_2(\Zhat)$ in the special case $K=\QQ$.   A fundamental ingredient of this earlier work was the Kronecker--Weber theorem whose conclusion fails for number fields $K\neq \QQ$.   We shall give an overview of an analogous algorithm for a general number field and work out the required group theory.  We also give some bounds on the index in Serre's theorem for a typical elliptic curve over a fixed number field.
\end{abstract}

\maketitle

\section{Introduction}
\subsection{Serre's open image theorem}

Let $E$ be an elliptic curve defined over a number field $K$.  We denote its $j$-invariant by $j_E$.  For each integer $N>1$, let $E[N]$ be the $N$-torsion subgroup of $E(\Kbar)$, where $\Kbar$ is a fixed algebraic closure of $K$.  The group $E[N]$ is a free $\ZZ/N\ZZ$-module of rank $2$.    The absolute Galois group $\Gal_K:=\Gal(\Kbar/K)$ acts on $E[N]$ and respects the group structure.   We may express this Galois action in terms of a representation $\rho_{E,N}\colon \Gal_K\to \Aut(E[N])\cong \GL_2(\ZZ/N\ZZ)$.  By choosing compatible bases and taking the inverse limit, these representations combine into a single Galois representation 
\[
\rho_E\colon \Gal_K \to \GL_2(\Zhat),
\]
where $\Zhat$ is the profinite completion of $\ZZ$.  The representation $\rho_E$ is uniquely determined up to isomorphism and hence the image $\rho_E(\Gal_K)$ is uniquely determined up to conjugacy in $\GL_2(\Zhat)$.    With respect to the profinite topologies, we find that $\rho_E$ is continuous and hence $\rho_E(\Gal_K)$ is a closed subgroup of the compact group $\GL_2(\Zhat)$. 

In \cite{Serre-Inv72}, Serre proved the following theorem which says that, up to finite index, the image of $\rho_E$ is as large as possible when $E$ is non-CM.

\begin{thm}[Serre's open image theorem] \label{T:Serre 1972}
Let $E$ be a non-CM elliptic curve defined over a number field $K$.  Then $\rho_E(\Gal_K)$ is an open subgroup of $\GL_2(\Zhat)$.  Equivalently, $\rho_E(\Gal_K)$ is a finite index subgroup of $\GL_2(\Zhat)$.
\end{thm}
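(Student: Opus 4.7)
The plan is to exploit the product decomposition $\GL_2(\Zhat) = \prod_\ell \GL_2(\ZZ_\ell)$ and reduce the theorem to two $\ell$-adic statements: first, that for every prime $\ell$ the image $G_\ell := \rho_{E,\ell^\infty}(\Gal_K)$ is open in $\GL_2(\ZZ_\ell)$; and second, that $G_\ell = \GL_2(\ZZ_\ell)$ for all but finitely many $\ell$. Given these, a Goursat-style argument controls the possible entanglement between the different $\ell$-adic representations and yields openness of $\rho_E(\Gal_K)$ in the full product.

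For openness at a fixed prime $\ell$, I would pass to Lie algebras. Since $G_\ell$ is a closed subgroup of the $\ell$-adic Lie group $\GL_2(\ZZ_\ell)$, the $p$-adic version of Cartan's theorem shows it is itself a Lie subgroup with Lie algebra $\mathfrak{g}_\ell \subseteq \mathfrak{gl}_2(\QQ_\ell)$, and openness is equivalent to $\dim \mathfrak{g}_\ell = 4$. Two inputs pin this down: the Weil pairing identifies $\det \circ \rho_{E,\ell^\infty}$ with the cyclotomic character, whose image has open Lie algebra, so $\tr$ maps $\mathfrak{g}_\ell$ onto $\QQ_\ell$; and the non-CM hypothesis, combined with the semisimplicity of the rational Tate module $V_\ell E$ as a $\Gal_K$-module, forces $\mathfrak{g}_\ell$ to be non-abelian (otherwise $\End_{\Gal_K}(V_\ell E)$ would be too large). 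A direct inspection of the subalgebras of $\mathfrak{gl}_2(\QQ_\ell)$ then shows that any non-abelian subalgebra with surjective trace must contain $\mathfrak{sl}_2$, hence must equal $\mathfrak{gl}_2(\QQ_\ell)$.

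For almost-everywhere surjectivity, I would work modulo $\ell$ using Dickson's classification of subgroups of $\GL_2(\FF_\ell)$. Let $H_\ell := \rho_{E,\ell}(\Gal_K)$. Since the Weil pairing forces $\det H_\ell = (\ZZ/\ell\ZZ)^*$, if $H_\ell \neq \GL_2(\FF_\ell)$ then $H_\ell$ is contained in a Borel subgroup, the normalizer of a split or nonsplit Cartan, or has exceptional projective image $A_4$, $S_4$, or $A_5$. Each possibility translates into a $K$-point on a modular curve: an $\ell$-isogeny in the Borel case, a point of $X_{\mathrm{sp}}(\ell)$ or $X_{\mathrm{ns}}(\ell)$ in the Cartan cases, and a point of an exceptional modular curve otherwise. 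Outside a finite set of $\ell$ the Borel and split-Cartan-normalizer cases are ruled out by isogeny-character arguments (the pair of characters on the diagonal are constrained by the Weil pairing and ramification); the remaining subgroup types fall to a Chebotarev argument, whereby $\tr(\Frob_\p)^2/\det(\Frob_\p)$ would have to lie in a small explicit subset of $\FF_\ell$ for every unramified prime $\p$, which for $\ell$ sufficiently large contradicts the distribution of Frobenius traces forced by the non-CM property.

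The main obstacle is precisely this last step, ruling out the nonsplit Cartan normalizer and the exceptional projective cases for almost all $\ell$; this is where Serre's theorem becomes genuinely deep and where most of the original argument is spent. Once openness and almost-everywhere surjectivity are both in hand, the proof concludes with a Goursat-style argument: for $\ell$ outside a finite bad set $S$, the simple groups $\PSL_2(\FF_\ell)$ are pairwise non-isomorphic and do not occur as composition factors of $\GL_2(\ZZ_{\ell'})$ for $\ell' \neq \ell$, so no nontrivial gluing among the different factors is possible, and $\rho_E(\Gal_K)$ contains a product of open subgroups of the $\GL_2(\ZZ_\ell)$ with factor equal to all of $\GL_2(\ZZ_\ell)$ for almost every $\ell$. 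This is exactly the condition for openness in $\GL_2(\Zhat)$.
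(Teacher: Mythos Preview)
The paper does not prove this theorem; it is stated with attribution to Serre's 1972 paper \cite{Serre-Inv72} and used as a black box throughout. So there is no in-paper proof to compare against. Your outline does follow the architecture of Serre's original argument---local openness at each $\ell$, surjectivity of $\rho_{E,\ell}$ for almost all $\ell$ via the subgroup classification in $\GL_2(\FF_\ell)$, and a Goursat/composition-factor argument to rule out entanglement between distinct primes---and you correctly flag the normalizer-of-nonsplit-Cartan case as the deep step.

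There is, however, a gap in your Lie-algebra argument for local openness. The claim ``any non-abelian subalgebra of $\mathfrak{gl}_2(\QQ_\ell)$ with surjective trace must contain $\mathfrak{sl}_2$'' is false: the Borel subalgebra of upper-triangular matrices is non-abelian, has surjective trace, and does not contain $\mathfrak{sl}_2$. What is missing is irreducibility of $V_\ell E$, which excludes the Borel. You do invoke semisimplicity earlier, and indeed semisimple $+$ non-abelian does force irreducibility in dimension $2$ (a reducible semisimple $2$-dimensional representation has image in a split torus, hence abelian Lie algebra); but the final implication needs to be stated as ``irreducible $+$ non-abelian $+$ surjective trace $\Rightarrow \mathfrak{g}_\ell = \mathfrak{gl}_2$'', not with irreducibility dropped. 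Note also that your route to non-abelianness via $\End_{\Gal_K}(V_\ell E)$ and semisimplicity implicitly invokes Faltings' theorems, which postdate 1972; Serre's own argument obtained irreducibility from \v{S}afarevi\v{c}-type finiteness and used Hodge--Tate theory at $\ell$ to control the Lie algebra.
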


Let $E$ be a non-CM elliptic curve over a number field $K$.    We will find it convenient to instead work with the dual representation
\[
\rho_E^*\colon \Gal_K\to \GL_2(\Zhat)
\]
of $\rho_E$, i.e., $\rho_E^*(\sigma)$ is the transpose of $\rho_E(\sigma^{-1})$.   Similarly, we can define $\rho_{E,N}^*$ for each $N\geq 1$.  

Define the group $G_E:=\rho_E^*(\Gal_K)$.   The group $G_E$ is open in $\GL_2(\Zhat)$ by Theorem~\ref{T:Serre 1972} and is uniquely determined up to conjugacy in $\GL_2(\Zhat)$.  Of course, computing $G_E$ is equivalent to computing the image of $\rho_E$ since $\rho_E(\Gal_K)=\{A^t: A \in G_E\}$.
The group $G_E$, when known, will have a simple description since it is open in $\GL_2(\Zhat)$, i.e., it is given by its level $N$ and a set of generators for its image modulo $N$ in $\GL_2(\ZZ/N\ZZ)$.  

Unfortunately, Serre's proof is ineffective in general.   In the special case $K=\QQ$, the author has recently given, and fully implemented, an algorithm to compute $G_E$ up to conjugacy, see \cite{openimage}.   The images for all non-CM elliptic curves over $\QQ$ of conductor at most $500000$ are easily accessible via the LMFDB \cite{lmfdb}.     

The goal of this article is to begin the study of how to compute the groups $G_E$ for a general number $K$.  A vital ingredient in the arguments of \cite{openimage} is that the commutator subgroup of $G_E$ agrees with $G_E\cap \SL_2(\Zhat)$ when $K=\QQ$; this makes use of the Kronecker--Weber theorem, cf.~\S\ref{SS:cyclotomic constraint}.   When $K\neq \QQ$, the commutator subgroup of $G_E$ is often strictly smaller than $G_E\cap \SL_2(\Zhat)$.  Much of this paper is dedicated to dealing with the new group theoretic complications that arise for this reason.

Fix a number field $K$.   We shall give an algorithm which defines a finite set $J_K\subseteq K$ and computes the group $G_E$, up to conjugacy, for all non-CM elliptic curves $E$ over $K$ whose $j$-invariant does not lie in $J_K$ and for which $\rho_{E,\ell}(\Gal_K)\supseteq \SL_2(\ZZ/\ell\ZZ)$ holds for primes $\ell>19$ (conjecturally the condition on the images of the $\rho_{E,\ell}$ can be removed by making $J_K$ large enough, cf.~Conjecture~\ref{C:uniformity}).  

Our set $J_K$ and our algorithm will both depend only on a \emph{finite} number of modular curves and morphisms which do not depend on $K$; these curves and morphisms can thus be precomputed.   Given any value in $K$, we will be able to determine whether or not it lies in $J_K$ (explicitly giving the full set is much harder since its finiteness makes use of Faltings' theorem).   The group theoretic aspects of the algorithm have been fully implemented.  The modular curve computations now seem reasonable but have not been performed yet.  Indeed, one of the main goals of this work was to confirm there were not too many cases as to make the modular curve computations infeasible.

\subsection{Index bounds}

For a non-CM $E/K$, the group $\det(G_E)$ depends only on $K$, cf.~\S\ref{SS:cyclotomic constraint}.  Since $[\GL_2(\Zhat):G_E]=[\Zhat^\times:\det(G_E)] \cdot [\SL_2(\Zhat):G_E \cap\SL_2(\Zhat)]$, we will thus focus on the index $[\SL_2(\Zhat):G_E \cap\SL_2(\Zhat)]$ when $K$ is fixed.

\begin{thm} \label{T:indices}
Let $K$ be a number field. There is a finite set $J_K \subseteq K$ such that for any non-CM elliptic curve $E$ over $K$ with  $j_E \notin J_K$ and $\rho_{E,\ell}(\Gal_K)\supseteq \SL_2(\ZZ/\ell\ZZ)$ for all primes $\ell>19$, we have
\[
[\SL_2(\Zhat): G_E \cap \SL_2(\Zhat)] \leq \begin{cases}
      1382400, &  \\
      677376 & \text{if $K\not\supseteq \QQ(\sqrt{-1},\sqrt{2},\sqrt{3},\sqrt{5})$},\\
      172800& \text{if $K\cap \QQ(\sqrt{-1})=\QQ$},\\
      30000& \text{if $K \cap \QQ(\sqrt{-1},\sqrt{2},\sqrt{3})=\QQ$},\\      
      7200& \text{if $K \cap \QQ(\sqrt{-1},\sqrt{2},\sqrt{3},\sqrt{5},\sqrt{7},\sqrt{11})=\QQ$},\\            
	1536 & \text{if $K=\QQ$.}      
\end{cases}
\]
\end{thm}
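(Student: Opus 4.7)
My approach is to derive this as a consequence of the algorithm's classification: once the finite set $J_K$ absorbs all elliptic curves whose $j$-invariants correspond to $K$-rational points on modular curves of genus at least $2$, the remaining possibilities for $G_E$ fall into an explicit finite list depending only on certain cyclotomic invariants of $K$, and the six bounds are obtained by maximizing $[\SL_2(\Zhat): G_E \cap \SL_2(\Zhat)]$ over the appropriate sublist in each case.

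First I decompose the index prime-by-prime. Writing $G_{E,\ell^\infty} := \pi_\ell(G_E) \subseteq \GL_2(\ZZ_\ell)$, the index $[\SL_2(\Zhat) : G_E \cap \SL_2(\Zhat)]$ is the product of the local factors $[\SL_2(\ZZ_\ell) : G_{E,\ell^\infty} \cap \SL_2(\ZZ_\ell)]$ together with an entanglement contribution tracked via Goursat-type arguments. The hypothesis $\rho_{E,\ell}(\Gal_K) \supseteq \SL_2(\ZZ/\ell\ZZ)$ for $\ell>19$, combined with the standard fact that any closed subgroup of $\SL_2(\ZZ_\ell)$ surjecting modulo $\ell$ is all of $\SL_2(\ZZ_\ell)$ (valid for $\ell \geq 5$), forces the local factor at each $\ell>19$ to be trivial. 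Only $\ell \in \{2,3,5,7,11,13,17,19\}$ contribute, and the entanglement among this finite set is itself bounded once the individual projections are controlled.

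Second, for each such $\ell$, the modular-curve method classifies the possible images. Each proper open $H \subseteq \GL_2(\ZZ_\ell)$ corresponds to a modular curve $X_H/\QQ$, and $G_{E,\ell^\infty}$ is contained in a conjugate of $H$ if and only if $j_E$ is the image of a $K$-point of $X_H$. By Faltings' theorem, only those $X_H$ of genus $0$ or $1$ with infinitely many $K$-points can produce infinite families of $j$-invariants; the finite contributions of all $X_H$ of genus at least $2$ are absorbed into $J_K$. Modulo $J_K$, the conjugacy class of $G_{E,\ell^\infty}$ belongs to an explicit, $K$-independent finite list. The cyclotomic constraint $\det(G_E) = \chi_{\cyc}(\Gal_K)$ from \S\ref{SS:cyclotomic constraint} then thins this list further: whether $K$ contains $\sqrt{d}$ for $d \in \{-1,2,3,5,7,11\}$ controls which quadratic characters appear in $\det(G_E)$, and hence which groups $H$ in the list are genuinely realizable over $K$. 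The six successively stronger cyclotomic hypotheses of the theorem correspond exactly to successively smaller admissible sublists.

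Finally, for each of the six cases, I read off the maximum local index at each contributing prime, multiply across primes, and incorporate the entanglement factor to obtain the stated bound. The main obstacle is the group-theoretic enumeration at $\ell=2$, where $\GL_2(\ZZ_2)$ has by far the richest collection of open subgroups and provides the dominant contributions to the maxima; this enumeration, already implemented by the author as part of the group-theoretic components of the algorithm, reduces the final step to a finite but lengthy case check whose output is exactly the table of six constants.
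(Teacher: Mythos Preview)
Your proposal has a genuine gap and misses the paper's central mechanism. You propose to bound $[\SL_2(\Zhat):G_E\cap\SL_2(\Zhat)]$ by a product of local indices $[\SL_2(\ZZ_\ell):G_{E,\ell^\infty}\cap\SL_2(\ZZ_\ell)]$ together with an ``entanglement contribution''. But the inclusion $G_E\cap\SL_2(\Zhat)\subseteq\prod_\ell(G_{E,\ell^\infty}\cap\SL_2(\ZZ_\ell))$ shows the product of local indices is a \emph{lower} bound for the global index, not an upper bound; the entanglement factor is precisely the quantity you need to bound from above, and you give no mechanism to do so. Classifying the local images $G_{E,\ell^\infty}$ via low-genus modular curves does not by itself control the global group $G_E\cap\SL_2(\Zhat)$, and there is no finite $K$-independent list of global groups $G_E$ to maximize over in the way you suggest.

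The paper's route is entirely different and avoids this problem. It never decomposes prime-by-prime. Instead it introduces the \emph{agreeable closure} $\calG$ of $G_E$: the minimal open subgroup of $\GL_2(\Zhat)$ containing $G_E$ and all scalars whose level is supported on the primes dividing the level of $[\calG,\calG]$. The two key facts are that (i) $[\calG,\calG]=[G_E,G_E]$, so by Lemma~\ref{L:KW}(\ref{L:KW ii}) one has $[\calG,\calG]\subseteq G_E\cap\SL_2(\Zhat)$ and hence $[\SL_2(\Zhat):G_E\cap\SL_2(\Zhat)]\le[\SL_2(\Zhat):[\calG,\calG]]$; and (ii) under the hypotheses of the theorem, $\calG$ is conjugate to one of the finitely many groups in the precomputed set $\calA_1$ (Theorem~\ref{T:loose agreeable}). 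Since $\calG$ contains $\Zhat^\times I$, the field $K_\calG$ is multiquadratic, and the constraint $K_\calG\subseteq K$ is what filters $\calA_1$ in each of the six cases. The bounds then come from tabulating $[\SL_2(\Zhat):[\calG,\calG]]$ and $K_\calG$ over all $\calG\in\calA_1$. Your reading of the quadratic-field conditions as constraints on $\det(G_E)$ is also slightly off: $\det(G_E)=\chi_\cyc(\Gal_K)$ is fixed by $K$; the conditions instead restrict which $\calG\in\calA_1$ satisfy $K_\calG\subseteq K$.
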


We will prove Theorem~\ref{T:indices} in \S\ref{SS:agreeable closure overview} where we reduce it to a direct calculation involving a finite number of open subgroups of $\GL_2(\Zhat)$ coming from our group theoretic computations.

\begin{remark}
\begin{romanenum}
\item
A well-known uniformity conjecture (Conjecture~\ref{C:uniformity}) would imply that Theorem~\ref{T:indices} still holds after removing the assumption on the images of the $\rho_{E,\ell}$.
\item
The index $[\SL_2(\Zhat): G_E \cap \SL_2(\Zhat)]$ can become arbitrarily large as we vary over all number fields $K$ and all non-CM elliptic curves $E/K$.  For example, consider a fixed non-CM elliptic curve $E/\QQ$ base extended by $K:=\QQ(E[2^i])$ with integers $i\geq 0$.    Theorem~\ref{T:indices}, along with Conjecture~\ref{C:uniformity}, shows that large indices are rare when the number field $K$ is fixed.
\end{romanenum}
\end{remark}

\subsection{Notation}

We now give some notation that will hold throughout.   All profinite groups will be viewed as topological groups with their profinite topology.  In particular, finite groups will have the discrete topology.   For a topological group $G$, we define its \defi{commutator subgroup} $[G,G]$ to be the smallest closed normal subgroup of $G$ for which $G/[G,G]$ is abelian.  Equivalently, $[G,G]$ is the closed subgroup of $G$ generated by the set of commutators $\{ghg^{-1}h^{-1}: g,h \in G\}$.

For each integer $N>1$, we let $\ZZ_N$ be the ring obtained by taking the inverse limit of the $\ZZ/N^e\ZZ$ with $e\geq 1$. Let $\Zhat$ be the ring obtained taking the inverse limit of $\ZZ/n\ZZ$ over all positive integers $n$ ordered by divisibility.  With the profinite topology, $\ZZ_N$ and $\Zhat$ are compact topological rings.  We have natural isomorphisms 
\[
\ZZ_N = {\prod}_{\ell|N} \ZZ_\ell\quad  \text{and} \quad \Zhat= \ZZ_N \times {\prod}_{\ell \nmid N} \ZZ_\ell ={\prod}_{\ell}\ZZ_\ell,
\]   
where the products are over primes $\ell$.   The symbol $\ell$ will always denote a rational prime.     Fix a positive integer $n$ dividing a power of $N$.  For a subgroup $G$ of $\GL_2(\ZZ_N)$ or $\GL_2(\Zhat)$, let $G_n\subseteq \GL_2(\ZZ_n)$ be the group that is the image of $G$ under the $n$-adic projection map.  From context it should be clear when we have $G_i$ with an index $i$ instead.

The \defi{level} of an open subgroup $G$ of $\GL_2(\Zhat)$ is the smallest positive integer $n$ for which $G$ contains the kernel of the reduction modulo $n$ homomorphism $\GL_2(\Zhat)\to \GL_2(\ZZ/n\ZZ)$.  The \defi{level} of an open subgroup $G$ of $\GL_2(\ZZ_N)$ is the smallest positive integer $n$ that divides some power of $N$ and for which $G$ contains the kernel of the reduction modulo $n$ homomorphism $\GL_2(\ZZ_N)\to \GL_2(\ZZ/n\ZZ)$. Similarly, we can define the \defi{level} of open subgroups of $\SL_2(\Zhat)$ and $\SL_2(\ZZ_N)$.

We shall view $\Qbar$ and any other algebraic field extension  of $\QQ$ that arise as subfields of $\CC$ (this is mainly to ensure easy comparison with the analytic theory when working with modular curves).

\subsection{Ideas underlying the algorithm}

We now briefly give some ideas and motivation behind our algorithm to compute the groups $G_E$.  Full details will be presented in \S\ref{S:overview} and this section will not be used later on.

Fix a number field $K$ and let $K^\cyc \subseteq \Kbar$ be the cyclotomic extension of $K$.  Let $\chi_\cyc\colon \Gal_\QQ \to  \Zhat^\times$ be the cyclotomic character, cf.~\S\ref{SS:cyclotomic constraint}. 

\subsubsection{An alternate description of the image}
Consider a non-CM elliptic curve $E$ over $K$.  We shall give an expression for $G_E$ in terms of a triple $(\calG,G,\gamma_E)$, see (\ref{E:GE presentation}) below.   This description of $G_E$ will reduce its computation to the computation of the triple.  The first step is to consider a larger group 
\[
G_E \subseteq \calG \subseteq \GL_2(\Zhat)
\]
 called the \defi{agreeable closure} of $G_E$, cf.~\S\ref{SS:agreeable basics}.   This group $\calG$ contains all the scalar matrices of $\GL_2(\Zhat)$ and satisfies $[\calG,\calG]=[G_E,G_E]$.   The group $\calG$ also has the advantage that its level is often significantly smaller than the level of $G_E$.  We have inclusions
 \[
 [\calG,\calG] = [G_E,G_E] \subseteq G_E \cap \SL_2(\Zhat) \subseteq \calG \cap \SL_2(\Zhat)
 \]
of open subgroups of  $\SL_2(\Zhat)$.   When $K=\QQ$, we will always have $G_E \cap \SL_2(\Zhat)=[G_E,G_E]=[\calG,\calG]$, cf.~Lemma~\ref{L:KW}; this need not hold when $K\neq \QQ$.
 
 We now fix an open subgroup $G$ of $\calG$ for which $G \cap \SL_2(\Zhat) = G_E \cap \SL_2(\Zhat)$.   The group $G$ is normal in $\calG$ with $\calG/G$ a finite abelian group since $G$ contains $[\calG,\calG]$ and is open in $\calG$.  Let
 \begin{align*}
\alpha_{G,E}\colon \Gal_K \xrightarrow{\rho^*_E} G_E \subseteq \calG \to \calG/G
 \end{align*}
 be the homomorphism obtained by composing $\rho_E^*$ with the obvious quotient map.   
 
 By Lemma~\ref{L:KW}, we have $G \cap \SL_2(\Zhat)=G_E\cap \SL_2(\Zhat)=\rho_E^*(\Gal(\Kbar/K^\cyc))$.  Therefore, $\alpha_{G,E}$ factors as $\Gal_K\to \Gal(K^\cyc/K) \to \calG/G$, where the first map is the quotient map.  There is thus a unique homomorphism 
\[
\gamma_E \colon \chi_\cyc(\Gal_K) \to \calG/G
\]
that satisfies $\alpha_{G,E}(\sigma)=\gamma_E(\chi_\cyc(\sigma))^{-1}$ for all $\sigma \in \Gal_K$.

We claim that 
\begin{align} \label{E:GE presentation}
G_E=\big\{ g \in \calG : \det g \in \chi_{\cyc}(\Gal_K), \, gG = \gamma_E(\det g) \big\}.
\end{align}
Denote by $\calH_E$ the group on the right-hand side of (\ref{E:GE presentation}).  For any $\sigma \in \Gal_K$, the above definitions give $\rho_E^*(\sigma) G =\gamma_E(\chi_\cyc(\sigma))^{-1}$.  As observed in \S\ref{SS:cyclotomic constraint}, we have $\det \circ \rho_E^* = \chi_\cyc^{-1}$ and hence $\rho_E^*(\sigma) G =\gamma_E(\det(\rho_E^*(\sigma)))$ for all $\sigma\in \Gal_K$.  This proves the inclusion $G_E \subseteq \calH_E$.   Since $G_E \subseteq \calH_E$ and $\calH_E\cap \SL_2(\Zhat)=G\cap \SL_2(\Zhat)=G_E\cap \SL_2(\Zhat)$, to prove the claim it suffices to show that $\det(G_E) \supseteq \det(\calH_E)$.  The claim thus follows since $\det(G_E)=\chi_\cyc(\Gal_K) \supseteq \det(\calH_E)$.

\subsubsection{A computation perspective}

Let $E$ be a non-CM elliptic curve over $K$. For simplicity, let us assume that $\rho_{E,\ell}(\Gal_K) \supseteq \SL_2(\ZZ/\ell\ZZ)$ for all primes $\ell>19$.      We wish to use (\ref{E:GE presentation}) to compute $G_E$.  We first need to find the agreeable closure $\calG$ of $G_E$.

We will see in \S\ref{SS:agreeable closure overview} that only finitely many $\calG$ can occur as we vary over all such elliptic curves $E/K$ with our fixed number field $K$.   Moreover, we shall prove that there is a finite set $J_K \subseteq K$, depending only on $K$, and a finite set of open subgroups $\calA_1$ of $\GL_2(\Zhat)$ such that if $j_E\notin J_K$, then $\calG$ is conjugate in $\GL_2(\Zhat)$ to a unique group in $\calA_1$.   Our set $\calA_1$ does not depend on $K$ or the curve $E/K$ and has cardinality $11960$.  One of the main tasks of this paper is to explicitly compute such a set $\calA_1$.   

We shall now impose the additional condition that $j_E\notin J_K$ and hence $\calG$ is conjugate in $\GL_2(\Zhat)$ to a unique group in $\calA_1$.  The group $\calG$ will in fact be conjugate to the group $\calG' \in \calA_1$ with maximal index $[\GL_2(\Zhat):\calG']$ so that $G_E$ is conjugate in $\GL_2(\Zhat)$ to a subgroup of $\calG'$.   Whether $G_E$ is conjugate to a subgroup of $\calG'$ can be readily determined after computing the modular curve $X_{\calG'}$ and its morphism to the $j$-line, cf.~\S\ref{SS:modular curves}.   Since $\calA_1$ is finite, the curve $X_{\calG'}$ and its morphism to the $j$-line can be precomputed for all $\calG' \in \calA_1$.  This gives the general idea of how to compute $\calG$; it is only unique determined up to conjugacy in $\GL_2(\Zhat)$ which is fine since the group $G_E$ also has this property.

Now assume that we know the agreeable closure $\calG$ of $G_E$.   For each subgroup $[\calG,\calG] \subseteq H \subseteq \calG \cap \SL_2(\Zhat)$, choose an open subgroup $G$ of $\calG$ for which $G\cap \SL_2(\Zhat)=H$ and define the character
\[
\alpha_{G,E}\colon \Gal_K \xrightarrow{\rho_E^*} \calG \to \calG/G.
\]
We will use modular curves and specializations to compute $\alpha_{G,E}$; these modular curves can also be precomputed.  

Amongst all the $G$, we find one with maximal index $[\SL_2(\Zhat): G\cap \SL_2(\Zhat)]$ for which $\alpha_{G,E}(\Gal(\Kbar/K^\cyc))=1$;  this group will satisfy $G\cap \SL_2(\Zhat)=G_E \cap \SL_2(\Zhat)$.  Now that we have $\calG$, $G$ and $\alpha_{G,E}$, we let $\gamma_E\colon \chi_\cyc(\Gal_K)\to \calG/G$ be the character for which $\alpha_{G,E}(\sigma)=\gamma_E(\chi_{\cyc}(\sigma))^{-1}$ for all $\sigma\in \Gal_K$.    So after conjugating appropriately in $\GL_2(\Zhat)$, we deduce that (\ref{E:GE presentation}) holds with our computable $\calG$, $G$ and $\alpha_{G,E}$.

\subsection{Acknowledgements}

The computations in this paper were performed using the Magma computer algebra system \cite{Magma}.  Many thanks to the referees for their useful comments and suggestions.

\section{Overview of ideas and group theoretic results} \label{S:overview}

\subsection{Cyclotomic and abelian extensions} \label{SS:cyclotomic constraint}

Let  $\chi_\cyc\colon \Gal_\QQ \to \Zhat^\times$ be the \defi{cyclotomic character}, i.e., the continuous homomorphism such that for every integer $n\geq 1$ and every $n$-th root of unity $\zeta\in \Qbar$ we have $\sigma(\zeta)=\zeta^{\chi_\cyc(\sigma)\bmod{n}}$ for all $\sigma\in \Gal_\QQ$.

Fix a number field $K$.  We can identify $K$ with a subfield of $\Qbar$ and take $\Kbar=\Qbar$.   Let $K^\cyc$ be the cyclotomic extension of $K$ in $\Kbar$.  Let $K^\ab$ be the maximal abelian extension of $K$ in $\Kbar$.   We have an inclusion $K^\cyc \subseteq K^\ab$.  We have $K^\cyc \subsetneq K^\ab$ whenever $K\neq \QQ$.  The \emph{Kronecker--Weber theorem} says that $\QQ^\cyc = \QQ^\ab$. 

 Let $E$ be a non-CM elliptic curve over $K$.    Using the Weil pairing on $E[n]$ for $n\geq 1$, one finds that $\det\circ \rho_E^*=\chi_\cyc^{-1}|_{\Gal_K}$.  In particular, $\det(G_E)=\chi_\cyc(\Gal_K)$ is an open subgroup of $\Zhat^\times$ that depends only on $K$.

 \begin{lemma} \label{L:KW}
 \begin{romanenum}
 \item  \label{L:KW i}
We have $\rho_E^*(\Gal_{K^\cyc})=G_E\cap \SL_2(\Zhat)$ and  $\rho_E^*(\Gal_{K^\ab})=[G_E,G_E]$. 
\item \label{L:KW ii}
We have an inclusion $[G_E,G_E]\subseteq G_E \cap \SL_2(\Zhat)$.
\item \label{L:KW iii}
If $K=\QQ$, then $[G_E,G_E]=G_E \cap \SL_2(\Zhat) $. 
\end{romanenum}
\end{lemma}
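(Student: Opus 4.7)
The plan is to unwind the definitions and use the identity $\det\circ\rho_E^*=\chi_\cyc^{-1}|_{\Gal_K}$ together with the relationship between abelian/cyclotomic extensions and the abelianization of $\Gal_K$.

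For part (\ref{L:KW i}), I first note that a continuous character gives $\sigma\in\rho_E^{*-1}(\SL_2(\Zhat))$ iff $\det(\rho_E^*(\sigma))=1$ iff $\chi_\cyc(\sigma)=1$. Since the kernel of $\chi_\cyc|_{\Gal_K}$ is exactly $\Gal_{K^\cyc}$ (as $K^\cyc$ is the fixed field of the kernel), this yields $\rho_E^{*-1}(\SL_2(\Zhat))=\Gal_{K^\cyc}$, and applying $\rho_E^*$ gives $\rho_E^*(\Gal_{K^\cyc})=G_E\cap\SL_2(\Zhat)$. For the second equality, I use that $\Gal_{K^\ab}=[\Gal_K,\Gal_K]$ as closed subgroups (this is the definition of the maximal abelian extension via the abelianization of the profinite group $\Gal_K$). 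Since $\rho_E^*$ is a continuous surjection from the compact group $\Gal_K$ onto $G_E$, the image $\rho_E^*([\Gal_K,\Gal_K])$ is closed in $G_E$ and contains every commutator $[\rho_E^*(\sigma),\rho_E^*(\tau)]$, so it equals the closed commutator subgroup $[G_E,G_E]$. Combining these gives $\rho_E^*(\Gal_{K^\ab})=[G_E,G_E]$.

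Part (\ref{L:KW ii}) is immediate from (\ref{L:KW i}) via the inclusion $K^\cyc\subseteq K^\ab$, which gives $\Gal_{K^\ab}\subseteq\Gal_{K^\cyc}$ and hence $[G_E,G_E]\subseteq G_E\cap\SL_2(\Zhat)$. (Alternatively, $\det$ embeds $G_E/(G_E\cap\SL_2(\Zhat))$ into the abelian group $\Zhat^\times$, forcing the commutator subgroup into $G_E\cap\SL_2(\Zhat)$.)

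Part (\ref{L:KW iii}) is where the Kronecker--Weber theorem enters: when $K=\QQ$ we have $\QQ^\ab=\QQ^\cyc$, so $\Gal_{\QQ^\ab}=\Gal_{\QQ^\cyc}$, and applying $\rho_E^*$ together with part (\ref{L:KW i}) yields $[G_E,G_E]=\rho_E^*(\Gal_{\QQ^\ab})=\rho_E^*(\Gal_{\QQ^\cyc})=G_E\cap\SL_2(\Zhat)$.

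There is no real obstacle here; the only subtle point is the use of compactness of $\Gal_K$ to ensure that the continuous image of the closed commutator subgroup is again the closed commutator subgroup of $G_E$, rather than merely a dense subgroup of it.
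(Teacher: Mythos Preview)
Your proof is correct and follows essentially the same route as the paper: both use $\det\circ\rho_E^*=\chi_\cyc^{-1}$ to identify $\rho_E^*(\Gal_{K^\cyc})$, use that $\Gal_{K^\ab}$ is the commutator subgroup of $\Gal_K$ to identify $\rho_E^*(\Gal_{K^\ab})$, and then deduce (\ref{L:KW ii}) and (\ref{L:KW iii}) from the inclusion $K^\cyc\subseteq K^\ab$ and Kronecker--Weber respectively. You supply more detail than the paper on the one genuinely nontrivial step, namely why $\rho_E^*([\Gal_K,\Gal_K])=[G_E,G_E]$ as \emph{closed} subgroups, which the paper asserts without comment.
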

\begin{proof}
We  have $\rho_E^*(\Gal(\Kbar/K^\ab))= [G_E,G_E]$ since $\Gal(\Kbar/K^\ab)$ is the commutator subgroup of $\Gal_K$.    Since $\chi_\cyc^{-1}=\det\circ \rho_E^*$, we have $\rho_E^*(\Gal(\Kbar/K^\cyc))=G_E\cap \SL_2(\Zhat)$.   The inclusion $[G_E,G_E]\subseteq G_E \cap \SL_2(\Zhat)$ thus follows from $K^\cyc \subseteq K^\ab$.  We have equality when $K=\QQ$ since $\QQ^\cyc=\QQ^\ab$.
\end{proof}

\begin{example} \label{ex:not surjective}
Consider any number field $K \neq \QQ$.   The main result of \cite{MR2721742} implies that for a ``random'' elliptic curve $E/K$, we have $G_E\supseteq \SL_2(\Zhat)$.  For such an elliptic curve $E/K$, we have
\[
[G_E,G_E] \subsetneq \SL_2(\Zhat)=G_E \cap \SL_2(\Zhat),
\]
where $[G_E,G_E] \neq \SL_2(\Zhat)$ can be shown by noting that the commutator subgroup of $\GL_2(\ZZ/2\ZZ)$ is a proper subgroup of $\SL_2(\ZZ/2\ZZ)$.

Now suppose that $K=\QQ$.   Since $[G_E,G_E]=G_E \cap \SL_2(\Zhat)$ by Lemma~\ref{L:KW}(\ref{L:KW iii}) and $[G_E,G_E]\subsetneq \SL_2(\Zhat)$, we find that $G_E\not\supseteq \SL_2(\Zhat)$.  That $\rho_E\colon \Gal_\QQ\to \GL_2(\Zhat)$ is never surjective was first observed by Serre, cf.~Proposition~22 of \cite{Serre-Inv72}. 
\end{example}

\subsection{Modular curves} \label{SS:modular curves}

Let $G$ be an open subgroup of $\GL_2(\Zhat)$ that contains $-I$.   We define $K_G$ to be the unique subfield of $\QQ^\cyc$ for which $\chi_\cyc(\Gal(\Qbar/K_G))=\det(G)$; it is a number field due to the openness of $G$.  

 Associated to $G$, there is a \defi{modular curve} $X_G$; it is a smooth projective and geometrically irreducible curve defined over $K_G$ that comes with a morphism 
\[
\pi_G\colon X_G \to \PP^1_{K_G}=\AA_{K_G}^1 \cup \{\infty\}.
\]
We define the \defi{genus} of the group $G$ to be the genus of the curve $X_G$.  For our applications, the following property of these curves are key.

\begin{prop} \label{P:key property}
For any number field $K\subseteq \Qbar$ and non-CM elliptic curve $E/K$, $\rho_E^*(\Gal_K)$ is conjugate in $\GL_2(\Zhat)$ to a subgroup of $G$ if and only if $K\supseteq K_G$ and $j_E \in \pi_G(X_G(K))$.
\end{prop}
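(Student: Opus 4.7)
The plan is to argue via the coarse moduli interpretation of the modular curve $X_G$. Recall that the open subscheme $Y_G := X_G \setminus \{\text{cusps}\}$ is a coarse moduli space over $K_G$ parametrizing pairs $(E', \alpha G)$, where $E'$ is an elliptic curve and $\alpha G$ is the $G$-orbit of an isomorphism $\alpha\colon \hat{T}(E') \xrightarrow{\sim} \Zhat^2$; the composition of $\pi_G$ with this identification recovers the $j$-invariant. The hypothesis $-I \in G$ is precisely what makes this moduli problem coarsely representable over $K_G$, since it absorbs the universal automorphism $[-1]$ of each elliptic curve into the $G$-orbit.

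I would first dispose of the cyclotomic condition. Using $\det \circ \rho_E^* = \chi_\cyc^{-1}|_{\Gal_K}$ from \S\ref{SS:cyclotomic constraint}, an inclusion $G_E \subseteq G$ (which is allowed after conjugation) gives $\chi_\cyc(\Gal_K) \subseteq \det(G) = \chi_\cyc(\Gal_{K_G})$; since $K_G \subseteq \QQ^\cyc$, Galois theory for $\QQ^\cyc/\QQ$ then forces $K \supseteq K_G$. For the remainder of the forward direction, after conjugating assume $G_E \subseteq G$ and pick a basis of $\hat{T}(E)$ realizing the identification $\rho_E^*(\Gal_K) = G_E$. The corresponding isomorphism $\alpha_E\colon \hat{T}(E) \xrightarrow{\sim} \Zhat^2$ has a $\Gal_K$-stable $G$-orbit (each $\sigma$ translates $\alpha_E$ into $G_E \cdot \alpha_E \subseteq G \cdot \alpha_E$), so $(E, \alpha_E G)$ determines a point of $Y_G(K) \subseteq X_G(K)$ above $j_E$.

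For the reverse direction, take $P \in X_G(K)$ with $\pi_G(P) = j_E$; since $j_E \in \AA^1(K)$ and cusps of $X_G$ lie above $\infty$, we have $P \in Y_G(K)$. By the coarse moduli property, $P$ corresponds to a pair $(E', \alpha_{E'} G)$ over $K$ (modulo a quadratic twist ambiguity controlled by $\Aut(E'_\Kbar) = \{\pm 1\}$) with $j_{E'} = j_E$. The existence of the $G$-orbit $\alpha_{E'} G$ yields an embedding of $G_{E'}$ into $G$ up to conjugation. Since $E$ is non-CM and $j_{E'} = j_E$, the curve $E'$ is a twist of $E$ by a class in $H^1(\Gal_K, \{\pm 1\})$; this twists $\rho_E^*$ by a character valued in $\{\pm I\} \subseteq G$, so $G_E$ too is conjugate to a subgroup of $G$.

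The main obstacle I anticipate is making the twist descent in the reverse direction rigorous: a non-cuspidal $K$-point of the coarse moduli space $Y_G$ only determines an elliptic curve with $G$-level structure up to a controlled ambiguity (quadratic twisting, since $\Aut(E_\Kbar)=\{\pm 1\}$ for non-CM $E$), and the key point is that $-I \in G$ absorbs exactly this ambiguity into the $G$-orbit, so the conclusion ``$G_E$ conjugate into $G$'' holds for all twists simultaneously. A fully rigorous treatment would invoke the descent theory for level structures on coarse modular curves, as developed in Deligne--Rapoport or Katz--Mazur.
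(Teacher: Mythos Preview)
Your approach is sound and is the standard moduli-theoretic argument. The paper itself does not give a self-contained proof: in Remark~\ref{R:modular curve follow-up} it simply observes that $\det(G_E)\subseteq \det(G)$ is equivalent to $K\supseteq K_G$ (exactly your first step) and then defers the rest to \cite[Proposition~6.4]{openimage}, noting that the argument there carries over verbatim once one works over $K_G$ instead of $\QQ$. That referenced argument is phrased in terms of the function field $\calF_N^{\bbar G}$ of modular functions rather than in terms of a moduli functor, but the content is the same: a $K$-point of $X_G$ over $j_E$ is a $\Gal_K$-fixed $G$-orbit of level structures on $E_{\Kbar}$, and $-I\in G$ makes the resulting constraint on the Galois image insensitive to quadratic twisting. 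Your write-up is thus more explicit than the paper's, and the twist/descent issue you flag at the end is handled in \cite{openimage} exactly as you suggest, via $-I\in G$ and $\Aut(E_{\Kbar})=\{\pm1\}$ for non-CM $E$.

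One point to watch: the paper defines $X_G$ via the $\bbar G$-invariants of $\calF_N$, and it is this specific construction that makes the relevant representation $\rho_E^*$ rather than $\rho_E$ (hence the paper's preference for the dual). In your moduli description ``$\alpha\colon \hat T(E)\xrightarrow{\sim}\Zhat^2$ with $\Gal_K$ acting on $\alpha G$'', the natural Galois action computes $\rho_E$, not $\rho_E^*$; the two differ by a transpose. This is a convention issue, not a gap, but to match the paper's statement exactly you should either parametrize surjections $\Zhat^2\twoheadrightarrow \hat T(E)$ instead, or note that $G_E$ and $G_E^t=\rho_E(\Gal_K)$ are simultaneously conjugate into $G$ only because the paper's $G$ arises from a $\bbar G$-fixed field (so the transpose is already baked in).
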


\begin{remark} \label{R:modular curve follow-up}
Modular curves are fully discussed in \S3 of \cite{openimage} with the additional assumption $\det(G)=\Zhat^\times$ (equivalently, $K_G=\QQ$).   We now make some remarks indicating that everything in \S3 of \cite{openimage} carries over straightforwardly to the general setting.  Let $N$ be a positive integer divisible by the level of $G$ and let $\bbar{G}\subseteq \GL_2(\ZZ/N\ZZ)$ be the image of $G$ modulo $N$.   With notation as in \cite[\S3]{openimage}, we define $X_G$ to be the smooth, projective and geometrically irreducible curve over $K_G$ whose function field is $\calF_N^{\bbar{G}}$; this field indeed has transcendence degree $1$ over $\QQ$ and the number field $K_G$ is the algebraical closure of $\QQ$ in $\calF_N^{\bbar{G}}$.   The field $K_G(X_G)=\calF_N^{\bbar{G}}$ consists of modular functions of level $N$ and contains the modular $j$-invariant $j$.  The inclusion of function fields $K_G(X_G)\supseteq K_G(j)$ induces a dominant morphism $\pi_G\colon X_G \to  \PP^1_{K_G}=\Spec K_G[j] \cup \{\infty\}=\AA^1_{K_G} \cup \{\infty\}$ of curves over $K_G$.   Since $\det(G_E)=\chi_\cyc(\Gal_K)$, we have $\det(G_E)\subseteq \det(G)$ if and only if $K\supseteq K_G$.  Proposition~\ref{P:key property} is proved in the same manner as \cite[Proposition 6.4]{openimage} by working over $K_G$ instead of $\QQ$.

Let $\Gamma_G$ be the congruence subgroup consisting of matrices in $\SL_2(\ZZ)$ whose image modulo $N$ lies in $\bbar{G}$.  With our implicit embedding $K_G\subseteq \CC$, there is an isomorphism of smooth compact Riemann surfaces between $X_G(\CC)$ and $\calX_{\Gamma_G}:=\Gamma_G\backslash\calH^*$, where $\calH^*$ is the extended complex upper half-plane and $\Gamma_G$ acts on it by linear fractional transformations (as noted in \S3.3 of \cite{openimage}, they have the same function field).  In particular, the genus of $\Gamma_G$, i.e., the genus of $\calX_{\Gamma_G}$, agrees with the genus of $G$.
\end{remark}

There are many approaches to computing models of $X_G$.   In \S5 of \cite{openimage}, we give an algorithm for computing a model of $X_G$ using modular forms under the assumption $\det(G)=\Zhat^\times$.  This assumption is not needed with the only change being that the spaces of modular forms $M_{k,G}$ that arise in \cite{openimage} are now vector spaces over $K_G$ instead of $\QQ$.    For our exposition below, we will take it for granted that one can always compute a model of $X_G$ in terms of modular forms/functions.   With respect to such a model, one can also compute the morphism $\pi_G$.

\subsection{The agreeable closure} \label{SS:agreeable closure overview}

Our initial strategy for computing the group $G_E$ is to instead compute a larger group that has the same commutator subgroup.  We first define the class of groups we will consider.  

We say that a subgroup $\calG$ of $\GL_2(\Zhat)$ is \defi{agreeable} if it is open, contains all the scalar matrices in $\GL_2(\Zhat)$, and each prime dividing the level of $\calG$ also divides the level of $[\calG,\calG]\subseteq \SL_2(\Zhat)$.   This definition uses that $[\calG,\calG]$ is open in $\SL_2(\Zhat)$, cf.~Lemma~\ref{L:openness of commutator}.  For any open subgroup $G$ of $\GL_2(\Zhat)$, there is a unique minimal agreeable subgroup $\calG$ of $\GL_2(\Zhat)$ that satisfies $G\subseteq \calG$, cf.~\S\ref{SS:agreeable basics}.  We call $\calG$ the \defi{agreeable closure} of $G$.   We have $[G,G]=[\calG,\calG]$, cf.~Lemma~\ref{L:agreeable commutators agree}.

Let $\calA_1'$ be the set of subgroups of $\GL_2(\Zhat)$ that are agreeable and have genus at most $1$.  The set $\calA_1'$ is stable under conjugation by $\GL_2(\Zhat)$.   Let $\calA_1$ be a set of representatives of the $\GL_2(\Zhat)$-conjugacy classes of $\calA_1'$.

\begin{thm} \label{T:main agreeable 1}
The set $\calA_1$ is finite and computable.  For all $G\in \calA_1$, the level of $[G,G]\subseteq \SL_2(\Zhat)$ is not divisible by any prime $\ell>19$.
\end{thm}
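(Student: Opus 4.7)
The plan is to reduce finiteness of $\calA_1$ to the explicit Cummins--Pauli classification of congruence subgroups of $\SL_2(\ZZ)$ of genus at most~$1$. Given $G\in\calA_1'$, Remark~\ref{R:modular curve follow-up} identifies the genus of $G$ with that of the congruence subgroup $\Gamma_G\subseteq \SL_2(\ZZ)$ arising from $G\cap \SL_2(\Zhat)$. Hence $\Gamma_G$ lies in the finite Cummins--Pauli table, which bounds its level uniformly and shows that every prime dividing that level lies in $\{2,3,5,7,11,13,17,19\}$.

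Next I would upgrade this to a level bound on $G$ itself, using that $G$ contains all scalars. At a prime $\ell$, the $\ell$-adic image $G_\ell$ contains both $\Gamma_{G,\ell}:=G_\ell\cap \SL_2(\ZZ_\ell)$ and $\ZZ_\ell^\times I$. For odd $\ell$, every $M\in \ker(\GL_2(\ZZ_\ell)\to \GL_2(\ZZ/\ell^e\ZZ))$ factors as $\lambda\cdot(\lambda^{-1}M)$, where $\lambda$ is the unique square root of $\det M$ in $1+\ell^e\ZZ_\ell$ and $\lambda^{-1}M\in \ker(\SL_2(\ZZ_\ell)\to \SL_2(\ZZ/\ell^e\ZZ))$; this shows the level of $G_\ell$ divides that of $\Gamma_{G,\ell}$. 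At $\ell=2$ one picks up at most an extra factor of~$2$. The level of $G$ is thus uniformly bounded by some $N_0$ depending only on the Cummins--Pauli table, and conjugacy in $\GL_2(\Zhat)$ among subgroups of level dividing $N_0$ is decided by conjugacy in $\GL_2(\ZZ/N_0\ZZ)$ of the reductions. So $\calA_1$ is produced by enumerating open subgroups of $\GL_2(\ZZ/N_0\ZZ)$ and sifting by the agreeable and genus-$\leq 1$ conditions, each a finite group-theoretic check.

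For the claim that the level of $[G,G]$ involves no prime $\ell>19$, fix such an $\ell$. By the first paragraph $\ell$ does not divide the level of $\Gamma_G$, so $G_\ell\cap \SL_2(\ZZ_\ell)=\SL_2(\ZZ_\ell)$. Since $\ell\geq 5$, the group $\SL_2(\ZZ_\ell)$ is topologically perfect: $\SL_2(\FF_\ell)$ is perfect, and a standard Frattini-style argument using the irreducible adjoint action on each successive quotient of the congruence filtration propagates perfectness to $\SL_2(\ZZ_\ell)$. Hence $[G_\ell,G_\ell]\supseteq [\SL_2(\ZZ_\ell),\SL_2(\ZZ_\ell)]=\SL_2(\ZZ_\ell)$, and since commutator formation commutes with continuous surjections, the $\ell$-component of $[G,G]$ is all of $\SL_2(\ZZ_\ell)$; thus $\ell$ does not divide the level of $[G,G]$.

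The main obstacle is the bookkeeping in the enumeration step: for each $\Gamma$ in the Cummins--Pauli table one must explicitly find every agreeable $G\subseteq \GL_2(\Zhat)$ whose $\SL_2$-intersection reduces to $\Gamma$, verify the agreeable condition (which requires computing $[G,G]$ and its level), and organize the results into $\GL_2(\Zhat)$-conjugacy classes. The prime $\ell=2$ is the most delicate, because the squaring map on $1+2\ZZ_2$ is not surjective, so the scalar-versus-$\SL_2$ decomposition of the $\GL_2$ congruence kernel acquires corrections at $2$ that must be tracked carefully.
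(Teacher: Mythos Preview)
Your overall strategy matches the paper's proof in \S\ref{SS:proof main agreeable}: reduce to the Cummins--Pauli tables via $\Gamma_G$, bound the level of $G$ using $G\supseteq \Zhat^\times\cdot(G\cap\SL_2(\Zhat))$ (this is Lemma~\ref{L:level by adjoining scalars} and Lemma~\ref{L:searching for agreeable} in the paper), and enumerate. That part is essentially correct.

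There is, however, a genuine gap in your third paragraph. From $G_\ell\supseteq\SL_2(\ZZ_\ell)$ and perfectness you correctly obtain $[G,G]_\ell=[G_\ell,G_\ell]=\SL_2(\ZZ_\ell)$, i.e., the $\ell$-adic \emph{projection} of $[G,G]$ is full. But ``$\ell$ does not divide the level of $[G,G]$'' is the strictly stronger assertion that $[G,G]\supseteq\{I\}\times\SL_2(\ZZ_\ell)$ inside $\SL_2(\Zhat)=\prod_p\SL_2(\ZZ_p)$, and surjectivity of a projection does not give this: an open subgroup of a product can surject onto a factor while still being a proper graph-like subgroup entangled with the other factors. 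What is missing is a Goursat argument (Lemma~\ref{L:Goursat}) together with the observation that, for $\ell>5$, the unique simple quotient $\PSL_2(\FF_\ell)$ of $\SL_2(\ZZ_\ell)$ cannot occur as a composition factor of any closed subgroup of $\prod_{p\neq\ell}\GL_2(\ZZ_p)$; this forces the entanglement to be trivial and is precisely the content of Lemma~\ref{L:old agreeable}, which the paper invokes at this step.

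Your second paragraph has a milder version of the same projection-versus-containment conflation: bounding the level of each $G_\ell$ does not by itself bound the level of $G$. There the repair is easy---your factorization $M=\lambda\cdot(\lambda^{-1}M)$ can be run with $\lambda$ placed only in the $\ell$-slot of $\Zhat^\times I\subseteq G$ and $\lambda^{-1}M$ only in the $\ell$-slot of $H\subseteq G$, giving the required global containment---but in the third paragraph no such trivial rephrasing is available.
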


Theorem~\ref{T:main agreeable 1}, along with the other theorems in \S\ref{SS:agreeable closure overview}, will be proved in \S\ref{S:proofs of agreeable closure overview}.  The groups in $\calA_1$, up to conjugacy, can be found in the public repository \cite{newgithub}.  The set $\calA_1$ has cardinality $11960$.    The number of groups $G\in \calA_1$ in terms of their genus and the index $[\Zhat^\times:\det(G)]$ is given in Table~\ref{table-A1}.   The largest integer that occurs as the level of a group in $\calA_1$ is $1176$.   

\begin{table}[h]
\begin{tabular}{l |lllll}
   & $1$ & $2$ & $2^2$ & $2^3$ & $2^4$ \\
\hline  genus $0$ &   $418$  &   $1490$  &  $1319$     &  $417$     &   $38$    \\
genus  $1$ &  $1078$   &  $3379$   &   $2891$    &   $866$    &      $64$
\end{tabular}
\caption{Number of groups $G$ in $\calA_1$ broken up by the genus and the index of $\det(G)$ in $\Zhat^\times$.}
\label{table-A1}
\end{table}

%{* [r`genus,r`det_index] : r in Families | r`is_agreeable *};
%{*
%    [ 0, 1 ]^^418,
%    [ 0, 2 ]^^1490,
%    [ 0, 4 ]^^1319,
%    [ 0, 8 ]^^417,
%    [ 0, 16 ]^^38,
%    [ 1, 1 ]^^1078,
%    [ 1, 2 ]^^3383,
%    [ 1, 4 ]^^2897,
%    [ 1, 8 ]^^868,
%    [ 1, 16 ]^^64
%*}

%> {*r`N: r in Families | r`is_agreeable *};                                
%{* 1, 2^^3, 3^^10, 4^^29, 5^^17, 6^^54, 7^^19, 8^^338, 9^^36, 10^^40, 11^^9, 
%12^^523, 13^^5, 14^^20, 15^^69, 16^^343, 17^^4, 18^^37, 19^^4, 20^^187, 21^^38, 
%22, 24^^3382, 25^^3, 26^^5, 27^^8, 28^^84, 30^^67, 32^^49, 33^^11, 36^^158, 
%39^^7, 40^^877, 42^^26, 44^^15, 48^^396, 49^^2, 51^^4, 52^^22, 56^^377, 57^^4, 
%60^^412, 66, 68^^7, 72^^669, 75^^3, 76^^6, 78^^5, 84^^139, 88^^55, 96^^49, 
%100^^5, 104^^102, 108^^12, 120^^2141, 132^^18, 136^^27, 147^^2, 152^^22, 
%156^^25, 168^^632, 196^^3, 200^^19, 204^^7, 216^^44, 228^^6, 264^^66, 300^^5, 
%312^^113, 392^^11, 408^^27, 456^^22, 588^^3, 600^^19, 1176^^11 *}
%> {*r`degree: r in Families | r`is_agreeable *};                           
%{* 1^^10, 2^^22, 3^^26, 4^^26, 5^^32, 6^^267, 7^^10, 8^^111, 9^^32, 10^^134, 
%11^^10, 12^^703, 14^^98, 15^^147, 16^^150, 18^^644, 20^^437, 21^^58, 22^^22, 
%24^^1305, 27^^48, 28^^246, 30^^485, 32^^162, 33^^16, 36^^2679, 40^^410, 42^^453,
%45^^99, 48^^1005, 54^^94, 55^^64, 56^^294, 60^^182, 63^^67, 64^^16, 72^^1236, 
%81^^16, 84^^80, 96^^44, 108^^32 *}

Let $\calA_2'$ be the set of agreeable subgroups $G\subseteq\GL_2(\Zhat)$ such that the following hold:
\begin{itemize}
\item 
$G$ has genus at least $2$ and every agreeable group $G\subsetneq G' \subseteq \GL_2(\Zhat)$ has genus at most $1$,
\item
the level of $G$ is not divisible by any prime $\ell>19$.
\end{itemize}
The set $\calA_2'$ is stable under conjugation by $\GL_2(\Zhat)$.   Let $\calA_2$ be a set of representatives of the $\GL_2(\Zhat)$-conjugacy classes of $\calA_2'$.

\begin{thm} \label{T:main agreeable 2}
\begin{romanenum}
\item \label{T:main agreeable 2 i}
The set $\calA_2$ is finite and computable.
\item \label{T:main agreeable 2 ii}
Take any agreeable subgroup $G$ of $\GL_2(\Zhat)$ with genus at least $2$ that satisfies $G_\ell \supseteq \SL_2(\ZZ_\ell)$ for all $\ell >19$.  Then $G$ is conjugate in $\GL_2(\Zhat)$ to a subgroup of some group in $\calA_2$.
\end{romanenum}
\end{thm}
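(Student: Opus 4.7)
The plan is to prove (ii) first, since (i) will then reduce to a finite, bounded enumeration.

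For (ii), let $G$ be as in the hypothesis. The collection of agreeable subgroups of $\GL_2(\Zhat)$ that contain $G$ is finite, because $G$ is open (hence of finite index) and every agreeable overgroup corresponds to a subgroup of the finite quotient $\GL_2(\Zhat)/N$ for any open normal $N\subseteq G$. Since $G$ has genus at least $2$ while $\GL_2(\Zhat)$ itself has genus $0$, one may walk upward from $G$ through this finite poset until reaching an agreeable group $G'\supseteq G$ of genus at least $2$ with the property that every agreeable $G''$ satisfying $G'\subsetneq G''\subseteq \GL_2(\Zhat)$ has genus at most $1$. It suffices to verify that the level of $G'$ is supported on primes at most $19$, because then $G'\in \calA_2'$ and $G\subseteq G'$ is conjugate to a subgroup of some element of $\calA_2$.

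Fix a prime $\ell>19$. The inclusion $G\subseteq G'$ combined with the hypothesis gives $G'_\ell\supseteq \SL_2(\ZZ_\ell)$, and the perfectness of $\SL_2(\ZZ_\ell)$ for $\ell\geq 5$ yields $[G',G']_\ell=\SL_2(\ZZ_\ell)$ (with $[G',G']$ viewed as a closed subgroup of $\SL_2(\Zhat)$). Applying Goursat's lemma to $[G',G']\subseteq \SL_2(\ZZ_\ell)\times \prod_{p\neq \ell}\SL_2(\ZZ_p)$, the quotient of $\SL_2(\ZZ_\ell)$ by $[G',G']\cap (\SL_2(\ZZ_\ell)\times \{I\})$ is isomorphic to a quotient of the prime-to-$\ell$ projection of $[G',G']$. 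Every nontrivial continuous quotient of $\SL_2(\ZZ_\ell)$ has the nonabelian simple group $\PSL_2(\FF_\ell)$ as a Jordan--H\"older factor, whereas the composition factors of subquotients of $\prod_{p\neq \ell}\SL_2(\ZZ_p)$ are abelian or of the form $\PSL_2(\FF_p)$ with $p\neq \ell$; the order identity $|\PSL_2(\FF_\ell)|=\ell(\ell^2-1)/2$ forces $\PSL_2(\FF_\ell)\not\cong \PSL_2(\FF_p)$ when $p\neq \ell$, so the common quotient must be trivial. Hence $[G',G']$ splits as a direct product at $\ell$, its level is coprime to $\ell$, and by the agreeable condition the level of $G'$ itself is coprime to $\ell$.

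For (i), part (ii) together with Theorem \ref{T:main agreeable 1} shows that every $G\in \calA_2'$ is conjugate in $\GL_2(\Zhat)$ to a proper agreeable subgroup of some $H\in \calA_1$: any minimal proper agreeable overgroup of $G$ has genus at most $1$ and so is conjugate to an element of $\calA_1$. To compute $\calA_2$, one enumerates for each $H\in \calA_1$ the agreeable subgroups $G\subseteq H$, up to conjugacy in $H$, whose level is supported on primes at most $19$, which have genus at least $2$, and which satisfy the maximality condition that every agreeable strict overgroup in $\GL_2(\Zhat)$ has genus at most $1$; one then reduces by $\GL_2(\Zhat)$-conjugacy.

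The main obstacle is terminating the search, i.e.\ bounding the level of the candidates $G\subseteq H$. The plan is to use that the genus of an agreeable group grows roughly linearly in the index of the corresponding congruence subgroup $\Gamma_G$ in $\SL_2(\ZZ)$, so that once the level of $G$ is large enough compared to that of $H$ the chain of agreeable subgroups from $G$ to $H$ is forced to contain an intermediate member of genus at least $2$, violating the maximality condition. Making this bound explicit and combining it with the finiteness of the subgroups of $\GL_2(\ZZ/N\ZZ)$ at each fixed level $N$ yields both the finiteness assertion of (i) and an effective termination criterion for the algorithm.
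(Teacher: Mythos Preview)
Your argument for (ii) is correct and essentially the same as the paper's: both pass to a maximal agreeable overgroup $G'$ of genus $\ge 2$ and show its level is supported on primes $\le 19$. The paper does this by applying Lemma~\ref{L:old agreeable} to $G$ itself (so the level of $G$ is already supported on primes $\le 19$, hence so is that of $G'\supseteq G$), while you reprove that lemma inline for $G'$; either way works.

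For (i), your reduction is right: every $G\in\calA_2'$ is, up to conjugacy, a \emph{maximal} agreeable subgroup of some $\calG\in\calA_1$. The gap is your termination argument. You propose that if the level of $G$ is large relative to that of $\calG$, then ``the chain of agreeable subgroups from $G$ to $\calG$ is forced to contain an intermediate member of genus at least $2$.'' But if $G$ is a maximal agreeable subgroup of $\calG$ there \emph{are} no intermediate agreeable groups, so what you actually need is: maximal agreeable subgroups of $\calG$ have bounded level. The Riemann--Hurwitz heuristic ``genus grows roughly linearly in the index'' only tells you the genus of $G$ is large, which is compatible with $G\in\calA_2'$; it does not produce an intermediate agreeable group, and so does not bound the level of $G$. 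You have not shown, and it is not obvious, that a maximal agreeable subgroup of a fixed $\calG$ cannot have arbitrarily large $M$-adic level.

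The paper closes this gap differently: it classifies the maximal agreeable subgroups of a given agreeable $\calG$ structurally (Lemmas~\ref{L:obvious maximal} and \ref{L:maximal leftovers}). Those of type~(\ref{L:obvious maximal ii}) are indexed by primes $p\nmid M$, so there are infinitely many, but the constraint ``level supported on primes $\le 19$'' leaves finitely many. For type~(\ref{L:obvious maximal i}) one needs to bound the level of a maximal open subgroup of $\calG_M$ containing scalars, which is exactly Lemma~\ref{L:how to look for maximal}. The remaining types (Lemmas~\ref{L:obvious maximal}(\ref{L:obvious maximal iii}) and \ref{L:maximal leftovers}) are finite by inspection. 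This is the missing ingredient in your approach: an actual level bound for maximal agreeable subgroups, not a genus heuristic.
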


For each number field $K$, let $J_K$ be the intersection of $K$ with the subset
\[
\bigcup_{G \in \calA_2, \, K_G \subseteq K} \pi_G(X_G(K)) 
\]
of $K\cup\{\infty\}$.   The set $J_K$ is finite by Theorem~\ref{T:main agreeable 2}(\ref{T:main agreeable 2 i}) and Faltings' theorem.  

\begin{thm} \label{T:loose agreeable}
Let $K$ be a number field and let $E/K$ be a non-CM elliptic curve with $j_E\notin J_K$ that satisfies $\rho_{E,\ell}(\Gal_K)\supseteq \SL_2(\ZZ/\ell\ZZ)$ for all primes $\ell >19$.   Take a group $G \in \calA_1$ with maximal index $[\GL_2(\Zhat):G]$ for which $K_G\subseteq K$ and $j_E \in \pi_{G}(X_G(K))$.  Then $G$ and the agreeable closure of $G_E$ are conjugate in $\GL_2(\Zhat)$.
\end{thm}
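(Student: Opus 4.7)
Let $\calG_E$ denote the agreeable closure of $G_E$ in $\GL_2(\Zhat)$. The plan is to first show that $\calG_E$ has genus at most $1$, so that it is $\GL_2(\Zhat)$-conjugate to a unique representative $G^*\in\calA_1$, and then to identify $G^*$ with the group $G$ singled out in the statement.

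For the genus bound, I would first establish $(\calG_E)_\ell\supseteq\SL_2(\ZZ_\ell)$ for every prime $\ell>19$. Since transpose--inverse preserves $\SL_2(\ZZ/\ell\ZZ)$ inside $\GL_2(\ZZ/\ell\ZZ)$, the hypothesis $\rho_{E,\ell}(\Gal_K)\supseteq\SL_2(\ZZ/\ell\ZZ)$ is equivalent to $G_E$ surjecting onto $\SL_2(\ZZ/\ell\ZZ)$ under the mod-$\ell$ projection; the standard lifting result for closed subgroups of $\SL_2(\ZZ_\ell)$ at primes $\ell\geq 5$ then upgrades this to $(G_E)_\ell\supseteq\SL_2(\ZZ_\ell)$, and a fortiori to $(\calG_E)_\ell\supseteq\SL_2(\ZZ_\ell)$. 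Now suppose for contradiction that $\calG_E$ has genus at least $2$. Theorem~\ref{T:main agreeable 2}(\ref{T:main agreeable 2 ii}) supplies some $G'\in\calA_2$ and $g\in\GL_2(\Zhat)$ with $g\calG_E g^{-1}\subseteq G'$, and in particular $gG_E g^{-1}\subseteq G'$. Applying Proposition~\ref{P:key property} to $G'$ yields $K_{G'}\subseteq K$ and $j_E\in\pi_{G'}(X_{G'}(K))$, hence $j_E\in K\cap\pi_{G'}(X_{G'}(K))\subseteq J_K$, contradicting $j_E\notin J_K$. Thus $\calG_E\in\calA_1'$, and there is a unique $G^*\in\calA_1$ that is $\GL_2(\Zhat)$-conjugate to $\calG_E$.

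For the final identification, Proposition~\ref{P:key property} applied to $G^*$ (using $G_E\subseteq\calG_E$, which is conjugate to $G^*$) gives $K_{G^*}\subseteq K$ and $j_E\in\pi_{G^*}(X_{G^*}(K))$, so $G^*$ is itself an eligible candidate in the maximization. Conversely, for any eligible $G\in\calA_1$, Proposition~\ref{P:key property} provides $h\in\GL_2(\Zhat)$ with $hG_E h^{-1}\subseteq G$; since conjugation preserves agreeability, $h\calG_E h^{-1}$ is the agreeable closure of $hG_E h^{-1}$, and minimality forces $h\calG_E h^{-1}\subseteq G$. In particular, $[\GL_2(\Zhat):G]\leq[\GL_2(\Zhat):\calG_E]=[\GL_2(\Zhat):G^*]$, with equality precisely when $h\calG_E h^{-1}=G$, i.e.\ when $G$ is conjugate to $\calG_E$ in $\GL_2(\Zhat)$. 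Since $\calA_1$ contains only one representative of each conjugacy class, any maximizer coincides with $G^*$, which is conjugate to $\calG_E$, as required.

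The only real obstacles are bookkeeping: verifying that the agreeable closure commutes with $\GL_2(\Zhat)$-conjugation (which follows from the conjugacy-invariance of openness, containing the scalars, the level, and the commutator subgroup) and deducing the $\ell$-adic inclusion $(G_E)_\ell\supseteq\SL_2(\ZZ_\ell)$ from the given mod-$\ell$ surjectivity hypothesis via the standard lifting lemma.
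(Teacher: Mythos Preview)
Your proof is correct and follows essentially the same route as the paper's: both arguments reduce the genus-$\geq 2$ case to membership in $J_K$ via Theorem~\ref{T:main agreeable 2}(\ref{T:main agreeable 2 ii}) and Proposition~\ref{P:key property}, after first lifting the mod-$\ell$ hypothesis to $(G_E)_\ell\supseteq\SL_2(\ZZ_\ell)$ using Lemma~\ref{L:Serre mod ell to ell-adic}. Your organization is slightly different---you first establish that $\calG_E$ has genus at most $1$ and then argue that the index-maximizer in $\calA_1$ must be its representative, whereas the paper starts from the maximizer $G$, conjugates so that $G_E\subseteq G$, and then shows $\calG_E=G$---but the substance is the same, and your index-comparison step handles the identification a bit more cleanly than the paper's appeal to inclusion-minimality.
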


Since the sets $\calA_1$ and $\calA_2$ are both finite, we can compute a model for the curve $X_G$ and compute the morphism $\pi_G$, with respect to this model, for each group $G\in \calA_1\cup \calA_2$. 

We can effectively determine whether a number in $K$ lies in the set $J_K$ by using explicit models of the curves $X_G$ for $G\in \calA_2$. Using explicit models of $X_G$, with $G\in \calA_1$, Theorem~\ref{T:loose agreeable} lets us compute the agreeable closure of $G_E$, up to conjugacy, for all elliptic curves $E/K$ satisfying the assumptions of the theorem.

\begin{proof}[Proof of Theorem~\ref{T:indices}]
Consider a number field $K$.  Take any non-CM elliptic curve $E/K$ with $j_E \notin J_K$ that satisfies $\rho_{E,\ell}(\Gal_K)\supseteq \SL_2(\ZZ_\ell)$ for $\ell>19$.  By Theorem~\ref{T:loose agreeable}, there is a group $\calG \in \calA_1$ that is conjugate to the agreeable closure of $G_E$.  There is no harm in conjugating $G_E$ so that $G_E\subseteq \calG$.    We have $[\calG,\calG]=[G_E,G_E]$ since $\calG$ is the agreeable closure of $G_E$.   Therefore, $[\calG,\calG] \subseteq G_E\cap \SL_2(\Zhat)$ by Lemma~\ref{L:KW}(\ref{L:KW ii}).   In particular, we have
\[
[\SL_2(\Zhat): G_E\cap \SL_2(\Zhat) ] \leq [\SL_2(\Zhat): [\calG,\calG] ].
\]
The group $\Zhat^\times/\det(\calG)$ is an elementary $2$-group since $\Zhat^\times I \subseteq \calG$.  Therefore, the number field $K_\calG$ is the compositum of quadratic extensions of $\QQ$.   We have $K_\calG \subseteq K$ since $\chi_\cyc(\Gal_K)=\det(G_E) \subseteq \det(\calG)$.

For the finite number of groups $\calG \in \calA_1$, one can compute the index $[\SL_2(\Zhat): [\calG,\calG] ]$ and the number field $K_\calG$; this data can be found in \cite{newgithub}.  All but the last inequality in Theorem~\ref{T:indices} follow from a direct inspection of this data.  

Suppose $K=\QQ$.  Only groups $\calG\in \calA_1$ with $\det(\calG)=\Zhat^\times$ will arise.   After possibly increasing the finite set $J_K$, we need only consider those groups $\calG$ for which $X_\calG(\QQ)$ is infinite.  Since $\calG$ is agreeable, Theorem~\ref{T:main agreeable 1} implies that the level of $\calG$ is not divisible by any prime $\ell>19$.    Therefore, $\calG$ is conjugate in $\GL_2(\Zhat)$ to one of the groups in the finite set $\mathscr{A}$ from Theorem 1.9 of \cite{openimage} with $X_\calG(\QQ)$ infinite (see also Remark~\ref{R:same agreeable}). 
The groups $\calG$ in this finite set $\mathscr{A}$ have been computed and for each group we have also computed $[\SL_2(\Zhat):[\calG,\calG]]$; this data can be found in the repository \cite{github} for the paper \cite{openimage}.  The largest integer that occurs as $[\SL_2(\Zhat):[\calG,\calG]]$, as we vary over all $\calG \in \mathscr{A}$, is $1536$.   Another proof of the bound $1536$ can be found in \cite{possibleindices}.
\end{proof}

\subsection{Some abelian quotients}

Fix an agreeable subgroup $\calG$ of $\GL_2(\Zhat)$.   Consider any non-CM elliptic curve $E$ over a number field $K$ for which the agreeable closure of $G_E$ is $\calG$.   We have $[G_E,G_E]=[\calG,\calG]$ so $G_E$ is a normal subgroup of $\calG$ and $\calG/G_E$ is abelian.   Moreover, $\calG/G_E$ is a finite abelian group since $G_E$ is open in $\calG$ by Theorem~\ref{T:Serre 1972}.  

There may be infinitely many open subgroups $G$ of $\calG$ with $[\calG,\calG]\subseteq G$.   In order to make future computations easier, we will want to work with groups $G$ with small level and small index $[\Zhat^\times:\det(G)]$.  The following theorem, which we prove in \S\ref{SS: nice supply proof}, promises a finite collection of nice subgroups $G\subseteq \calG$ that will be suitable for our applications.

\begin{thm} \label{T:nice supply of G}
Let $N$ be the least common multiple of the levels of $\calG$ and $[\calG,\calG]$.  Then there is a computable finite set $\calS_\calG$ of open subgroups of $\calG$ such that:
\begin{itemize}
\item
For every group $G \in \calS_\calG$, we have $[\calG,\calG] \subseteq G \cap \SL_2(\Zhat) \subseteq \calG \cap \SL_2(\Zhat)$.
\item
For every group $G \in \calS_\calG$, the level of $G$ divides some power of $2$ times $N$. 
\item
For every group $G\in \calS_\calG$, $[\Zhat^\times: \det(G)]$ is a power of $2$.
\item 
For every group $[\calG,\calG] \subseteq W \subseteq \calG \cap \SL_2(\Zhat)$, we have $G \cap \SL_2(\Zhat)=W$ for a unique $G\in \calS_\calG$.
\end{itemize}
\end{thm}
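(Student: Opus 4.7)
The plan is to construct, for each $W$ with $[\calG,\calG]\subseteq W\subseteq \calG\cap\SL_2(\Zhat)$, a specific open subgroup $G_W\subseteq \calG$ whose intersection with $\SL_2(\Zhat)$ is exactly $W$, and then to set $\calS_\calG:=\{G_W\}_W$. There are only finitely many admissible $W$ because they are in bijection with the subgroups of the finite abelian group $(\calG\cap\SL_2(\Zhat))/[\calG,\calG]$, so $\calS_\calG$ is automatically finite; the uniqueness clause of the theorem then holds by construction, because $W$ is recovered from $G_W$ as $G_W\cap\SL_2(\Zhat)$, and distinct $W$'s yield distinct $G_W$'s.

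The key tool is the pro-odd part $(\Zhat^\times)_{\mathrm{odd}}\subseteq \Zhat^\times$ — the largest pro-odd closed subgroup, i.e.\ the prime-to-$2$ Hall factor in the canonical decomposition $\Zhat^\times = (\Zhat^\times)_2\times(\Zhat^\times)_{\mathrm{odd}}$ of profinite abelian groups; squaring is a bijection on it and it has no element of order $2$. Writing $\pi_N\colon\GL_2(\Zhat)\to\GL_2(\ZZ/N\ZZ)$ for reduction modulo $N$, agreeability of $\calG$ gives $\Zhat^\times I\subseteq\calG$ (whence $(\Zhat^\times)_{\mathrm{odd}}I\subseteq\calG$), and the hypothesis on $N$ gives $\ker(\pi_N)\subseteq\calG$. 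For each admissible $W$ I would define
\[
G_W := W\cdot(\Zhat^\times)_{\mathrm{odd}}I\cdot\ker(\pi_N).
\]
Since $W\supseteq[\calG,\calG]$ is automatically normal in $\calG$, the scalars are central, and $\ker(\pi_N)$ is normal in $\GL_2(\Zhat)$, this product is a subgroup of $\calG$; it is open of level dividing $N$ thanks to the $\ker(\pi_N)$ factor, and its determinant $(\Zhat^\times)_{\mathrm{odd}}\cdot\ker(\Zhat^\times\to(\ZZ/N\ZZ)^\times)$ visibly contains $(\Zhat^\times)_{\mathrm{odd}}$, forcing $[\Zhat^\times:\det(G_W)]$ to equal the order of the $2$-Sylow subgroup of $(\ZZ/N\ZZ)^\times$, a power of $2$.

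The substantive step is to verify the identity $G_W\cap\SL_2(\Zhat)=W$. Writing a generic element of $G_W\cap\SL_2(\Zhat)$ as $g=w\cdot uI\cdot k$ with $w\in W$, $u\in(\Zhat^\times)_{\mathrm{odd}}$, and $k\in\ker(\pi_N)$, the condition $\det(g)=1$ reads $u^2\det(k)=1$ with $\det(k)\equiv 1\bmod N$, so $u^2\equiv 1\bmod N$; since the image of $u$ in $(\ZZ/N\ZZ)^\times$ lies in the odd-order subgroup, this forces $u\equiv 1\bmod N$, and then $uI\cdot k$ belongs to $\ker(\SL_2(\Zhat)\to\SL_2(\ZZ/N\ZZ))$. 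Both halves of the definition of $N$ are essential here: the $\calG$-level half to place the scalars and $\ker(\pi_N)$ inside $\calG$, and the $[\calG,\calG]$-level half to conclude $\ker(\SL_2(\Zhat)\to\SL_2(\ZZ/N\ZZ))\subseteq[\calG,\calG]\subseteq W$, whence $g\in W$. I expect this is the only real, if small, obstacle; with it in hand, the remaining assertions (finiteness, the three bulleted conditions, computability) all follow because the entire discussion takes place inside the finite group $\GL_2(\ZZ/N\ZZ)$, where $\calG$, $[\calG,\calG]$, each admissible $W$, and each $G_W$ are represented by their images modulo $N$.
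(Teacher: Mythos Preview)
Your argument is correct and notably more direct than the paper's. The paper does not write down $G_W$ by an explicit formula; instead it first develops a general existence criterion (Theorem~\ref{T:G existence}): given $W$ and a target determinant group $U\subseteq\det(\calG)$, there is a $G$ with $G\cap\SL_2(\Zhat)=W$ and $\det(G)=U$ if and only if a certain lifting condition on the $2$-power torsion $S=U_N[2^\infty]$ holds. The paper then, for each $W$, \emph{searches} for the largest admissible $S$, builds $U$ from it together with a torsion-free complement, and invokes the criterion to produce $G$ of level dividing a power of $2$ times $N$.

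Your construction $G_W=W\cdot(\Zhat^\times)_{\mathrm{odd}}I\cdot\ker(\pi_N)$ bypasses all of this: it commits in advance to the specific determinant group $(\Zhat^\times)_{\mathrm{odd}}\cdot\ker(\Zhat^\times\to(\ZZ/N\ZZ)^\times)$, independent of $W$, and the verification that $G_W\cap\SL_2(\Zhat)=W$ is a two-line computation using that squaring is bijective on the pro-odd part. You even get the sharper conclusion that the level of $G_W$ divides $N$ itself, not merely a power of $2$ times $N$. What the paper's approach buys is optimization: by maximizing $S$ it produces a $G$ with $[\Zhat^\times:\det(G)]$ potentially much smaller than your fixed value $|\text{$2$-Sylow of }(\ZZ/N\ZZ)^\times|$, which matters for the downstream modular-curve computations in \S\ref{SS:etale overview} (smaller index means smaller $K_G$ and cheaper \'etale covers). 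For the theorem \emph{as stated}, however, your argument is both sufficient and cleaner.
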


\subsection{Some abelian representations}  \label{SS:etale overview}

Throughout this section we fix an agreeable subgroup $\calG$ of $\GL_2(\Zhat)$ and a finite set of open subgroups $\calS_\calG$ of $\calG$ as in Theorem~\ref{T:nice supply of G}.   Fix an integer $N\geq 3$ that is divisible by the level of $[\calG,\calG]$, the level of $\calG$, and the level of each $G\in \calS_\calG$.

 Define the open subvariety $U_\calG:=\pi_\calG^{-1}(\PP^1_{K_\calG}-\{0,1728,\infty\})$ of $X_\calG$.  The Weierstrass equation
\begin{align} 
\label{E:generic Weierstrass}
y^2 =  x^3 -  27 \cdot j (j-1728)  \cdot  x +54 \cdot j (j-1728)^2,
\end{align}
with $j=\pi_\calG$, defines an elliptic scheme $\scrE_\calG$ over $U_\calG$.  For a number field $K\supseteq K_\calG$ and point $u\in U_\calG(K)$, the fiber of $\scrE_\calG$ over $u$ is the elliptic curve $\scrE_{\calG,u}$ over $K$ given by (\ref{E:generic Weierstrass}) with $j$ replaced by $\pi_\calG(u)\in K-\{0,1728\}$; it has $j$-invariant $\pi_\calG(u)$. 

Let $\bbar{\calG}$ be the image of $\calG$ modulo $N$.  As in \cite[\S6.3.1]{openimage}, we have a surjective and continuous representation
\[
\varrho_{\scrE_\calG,N}^*\colon \pi_1(U_\calG,\bbar\eta) \to \bbar{\calG} \subseteq \GL_2(\ZZ/N\ZZ),
\]
where $\bbar\eta$ is a particular geometric generic point of $U_\calG$ and $\pi_1$ denotes the \'etale fundamental group (the only difference being to base extend by $K_G$ first).  The representation $\varrho_{\scrE}^*$ can be constructed in a similar fashion to our adelic representations for elliptic curves;  the $N$-torsion subscheme $\scrE_\calG[N]$ can be viewed as a lisse sheaf on $U_\calG$ that gives rise to the representation.  For any number field $K\supseteq K_G$ and point $u\in U_\calG(K)$, the specialization of $\varrho_{\scrE_\calG,N}^*$ at $u$ defines a representation $\Gal_K \to \bbar\calG \subseteq \GL_2(\Zhat)$ that is isomorphic to $\rho_{(\scrE_{\calG})_u,N}^*$.  
 
Now take any group $G \in \calS_\calG$ and let $\bbar{G}$ be the image of $G$ modulo $N$.  Since the level of $G$ divides $N$ by our choice of $N$, reduction modulo $N$ induces an isomorphism $\calG/G\xrightarrow{\sim} \bbar\calG/\bbar G$ that we will view as an equality.  Define the homomorphism
\[
\alpha_G\colon \pi_1(U_\calG) \to \calG/G
\]
by composing $\varrho_{\scrE_\calG,N}^*$ with the quotient map $\calG\to \bbar{\calG}/\bbar{G}=\calG/G$; we may suppress the point $\bbar\eta$ since $\calG/G$ is abelian.  

\begin{prop}  \label{P:alpha computable}
The homomorphism $\alpha_G$ is computable, i.e., one can compute a model of $U_\calG$ and an \'etale cover $Y\to U_\calG$ corresponding to $\alpha_G$ along with the action of $\calG/G$ on $Y$.  In particular, for any number field $K\supseteq K_\calG$ and point $u\in U_\calG(K)$, one can compute the specialization $\Gal_K\to \calG/G$ of $\alpha_G$ at $u$.
\end{prop}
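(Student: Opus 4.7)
The plan is to realize the \'etale cover corresponding to $\alpha_G$ as $Y := X_G \times_{X_\calG} U_\calG$, where $X_G$ is viewed as a $K_\calG$-scheme by forgetting its $K_G$-structure, and then to compute it explicitly using the modular forms algorithms of \cite{openimage}.

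First I would verify that $Y\to U_\calG$ realizes $\alpha_G$.  Since $[\calG,\calG]\subseteq G$, the subgroup $\bbar{G}$ is normal in $\bbar\calG$ and the function field extension $\calF_N^{\bbar\calG}\subseteq \calF_N^{\bbar{G}}$ is Galois with group $\bbar\calG/\bbar{G}=\calG/G$.  Taking normalizations in these function fields produces a finite morphism $X_G\to X_\calG$ of degree $[\calG:G]$ that is \'etale above $U_\calG$, since that locus avoids the cuspidal and elliptic points of $X_\calG$.  That the monodromy of the resulting Galois cover $Y\to U_\calG$ equals $\alpha_G$ follows from the tautological interpretation of the modular curves involved as parameter spaces for elliptic curves with level structure, combined with the identification of $\scrE_\calG$ as (a twist of) the universal elliptic curve over $X_\calG$ that underlies the definition of $\varrho_{\scrE_\calG,N}^*$.

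Next I would make this cover computable.  The modular forms algorithms of \S5 of \cite{openimage}, extended as in Remark~\ref{R:modular curve follow-up} to allow $\det(\calG)\neq \Zhat^\times$ by working with spaces of modular forms over $K_\calG$ in place of $\QQ$, produce a model of $X_\calG$ together with $\pi_\calG$.  Applied to $\bbar{G}$, the same methods yield a model of $X_G$ and the inclusion $\calF_N^{\bbar\calG}\hookrightarrow \calF_N^{\bbar{G}}$ of function fields, obtained by expressing generators of $\calF_N^{\bbar{G}}$ through their $q$-expansions; this realizes the morphism $X_G\to X_\calG$, and restriction to $U_\calG$ produces $Y$.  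The $\calG/G$-action on $Y$ over $U_\calG$ is induced by the standard $\GL_2(\ZZ/N\ZZ)$-action on $\calF_N$, whose effect on $q$-expansions can be tracked explicitly.  Finally, for the specialization claim, given $u\in U_\calG(K)$ with $K\supseteq K_\calG$ the fiber $Y_u$ is a finite \'etale $K$-algebra carrying a $\calG/G$-action; decomposing it as a product of number fields and recording how $\Gal_K$ intertwines with $\calG/G$ on its geometric points yields the desired specialization $\Gal_K\to \calG/G$.

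The main obstacle is the modular forms computation when $K_G\supsetneq K_\calG$.  In that case $Y$ is geometrically disconnected over $K_\calG$: its components are permuted by the surjection $\calG/G\twoheadrightarrow \det(\calG)/\det(G)\cong \Gal(K_G/K_\calG)$, and one must carefully separate the ``geometric'' part of the $\calG/G$-action (coming from the cover $X_G\to X_\calG$ after base change to $K_G$) from the component-permuting part (coming from the Galois action on $\pi_0(Y\otimes_{K_\calG}\bbar{K_\calG})$).  This is a natural extension of the techniques of \cite{openimage} and requires no new theoretical input, but the bookkeeping is substantially more involved than in the $K_\calG=\QQ$ setting treated there.
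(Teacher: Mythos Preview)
Your proposal is correct and follows essentially the same approach as the paper: the paper's proof simply defers to \S11 of \cite{openimage} (which constructs the cover via the modular curve $X_G$ exactly as you describe), noting only that one must work over the larger base field and that the constant field of $Y$ properly contains $K_\calG$ when $\det(G)\subsetneq\det(\calG)$---precisely the geometric-disconnectedness issue you flag in your final paragraph.
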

\begin{proof}
This follows from the same argument as in \cite[\S11]{openimage} except working over $K_G$.  A slight difference to keep in mind is that the field of constants $K_\calG$ in $K_\calG(U_\calG)$ will not be algebraically closed in the function field of $Y$ when $\det(G)$ is a proper subgroup of $\det(\calG)$; in \cite{openimage}, we only considered cases where $\det(G)=\det(\calG)$.
\end{proof}

\subsubsection{A description of $G_E$}
Now consider any non-CM elliptic curve $E$ defined over a number field $K$ for which the agreeable closure is conjugate to $\calG$ in $\GL_2(\Zhat)$.    

By Proposition~\ref{P:key property}, we have
$j_E = \pi_\calG(u)$ for some point $u\in U_\calG(K)$.   The elliptic curve $E$ is a quadratic twist of $E':=(\scrE_\calG)_u$ by a character $\chi\colon \Gal_K\to \{\pm 1\}$ since $E$ is non-CM and the two elliptic curves have the same $j$-invariant.    For each group $G\in \calS_\calG$, let 
\[
\alpha_{G,E} \colon \Gal_K \to \calG/G
\]
be the homomorphism that is the product of $\chi$ and the specialization of $\alpha_G$ at $u$.

\begin{prop} \label{P:nouveau G choice}
\begin{romanenum}
\item \label{P:nouveau G choice i}
There is a unique group $G\in \calS_\calG$ such that $\alpha_{G,E}(\Gal_{K^\cyc})=1$ and $G\cap \SL_2(\Zhat)$ is minimal with respect to inclusion.   
\item \label{P:nouveau G choice ii}
Take $G\in \calS_\calG$ as in (\ref{P:nouveau G choice i}).  The groups $G_E$ and
\[
\calH_E:= \{ g \in \calG : \det g\, \in \chi_\cyc(\Gal_K),\, g G = \gamma_E(\det g) \}
\]
are conjugate in $\GL_2(\Zhat)$, where $\gamma_E\colon \chi_\cyc(\Gal_K)\to \calG/G$ is the unique homomorphism satisfying $\alpha_{G,E}(\sigma) = \gamma_E(\chi_\cyc(\sigma)^{-1})$ for all $\sigma\in \Gal_K$.
\end{romanenum}
\end{prop}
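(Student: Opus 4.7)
The plan is to first reduce to the case where $G_E \subseteq \calG$ with $\calG$ equal to the agreeable closure of $G_E$, a harmless conjugation since $G_E$ is determined only up to conjugacy.  The central input, used for both parts, is the identification
\[
\alpha_{G,E}(\sigma) = \rho_E^*(\sigma)\, G \in \calG/G \qquad\text{for every } \sigma\in \Gal_K \text{ and } G\in \calS_\calG.
\]
I would establish this by unpacking the construction of $\varrho_{\scrE_\calG,N}^*$: its specialization at $u$ is isomorphic to $\rho_{E',N}^*$, so the specialization of $\alpha_G$ at $u$ sends $\sigma$ to $\rho_{E'}^*(\sigma)G$.  Combining with the twist relation $\rho_E^*(\sigma) = \chi(\sigma)\cdot \rho_{E'}^*(\sigma)$ (valid once $\chi$ is absorbed as the scalar matrix $\chi(\sigma)I$) and with the fact that $\calG/G$ is abelian---because $[\calG,\calG]\subseteq G\cap \SL_2(\Zhat)$ by Theorem~\ref{T:nice supply of G}, forcing $G$ to be normal in $\calG$---yields the identification, which is independent of the chosen representative of $\rho_E^*$ within its $\calG$-conjugacy class.

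For part (\ref{P:nouveau G choice i}), I would invoke Lemma~\ref{L:KW}(\ref{L:KW i}), which gives $\rho_E^*(\Gal_{K^\cyc}) = G_E\cap \SL_2(\Zhat)$, to see that the condition $\alpha_{G,E}(\Gal_{K^\cyc})=1$ is equivalent to $G_E\cap \SL_2(\Zhat)\subseteq G\cap \SL_2(\Zhat)$.  The subgroup $W_0:=G_E\cap \SL_2(\Zhat)$ contains $[G_E,G_E]=[\calG,\calG]$ and is contained in $\calG\cap \SL_2(\Zhat)$, so the final bullet of Theorem~\ref{T:nice supply of G} supplies a unique $G\in \calS_\calG$ with $G\cap \SL_2(\Zhat)=W_0$.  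This $G$ is visibly the minimal element of $\calS_\calG$ satisfying the defining condition, giving (\ref{P:nouveau G choice i}).

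For part (\ref{P:nouveau G choice ii}), I would first note that $\alpha_{G,E}$ kills $\Gal_{K^\cyc}=\ker(\chi_\cyc|_{\Gal_K})$, so it factors through $\chi_\cyc(\Gal_K)$ to give the homomorphism $\gamma_E$.  The inclusion $G_E\subseteq \calH_E$ is then immediate: for $g=\rho_E^*(\sigma)$ one has $\det g = \chi_\cyc(\sigma)^{-1}\in \chi_\cyc(\Gal_K)$ and $gG = \alpha_{G,E}(\sigma) = \gamma_E(\det g)$.  To upgrade this to equality I would compare the two groups inside $\calG_0:=\calG\cap \det^{-1}(\chi_\cyc(\Gal_K))$ via the exact sequence $1 \to H \cap \SL_2(\Zhat) \to H \to \det(H) \to 1$ for $H \in \{G_E,\calH_E\}$.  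Both have image $\chi_\cyc(\Gal_K)$ under $\det$ (the second from $G_E\subseteq \calH_E\subseteq \calG_0$), and the kernel of $\det$ on $\calH_E$ consists of $g\in \calG\cap \SL_2(\Zhat)$ with $gG=\gamma_E(1)=G$, which forces $g\in G\cap \SL_2(\Zhat)$; by (\ref{P:nouveau G choice i}) this coincides with $G_E\cap \SL_2(\Zhat)$.  Hence $|G_E|=|\calH_E|$ and therefore $G_E=\calH_E$; retracing the initial conjugation yields the desired relation in general.  The main point requiring care is really just the bookkeeping of twist characters and cosets; once the identification of $\alpha_{G,E}$ with $\rho_E^*\bmod G$ is in place, both parts reduce to a short index count.
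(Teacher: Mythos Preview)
Your proposal is correct and follows essentially the same route as the paper's proof: reduce by conjugation so that $G_E\subseteq\calG$, identify $\alpha_{G,E}(\sigma)$ with $\rho_E^*(\sigma)G$ via the specialization of $\varrho_{\scrE_\calG,N}^*$ and the twist by $\chi$, use Lemma~\ref{L:KW}(\ref{L:KW i}) together with the last bullet of Theorem~\ref{T:nice supply of G} for part~(\ref{P:nouveau G choice i}), and then compare $G_E\subseteq\calH_E$ on determinants and on intersections with $\SL_2(\Zhat)$ for part~(\ref{P:nouveau G choice ii}). One cosmetic point: writing ``$|G_E|=|\calH_E|$'' is not quite the right way to phrase the conclusion for profinite groups---better to argue directly (as the paper does, and as your exact-sequence setup already gives) that $G_E\subseteq\calH_E$ with equal determinant image and equal $\SL_2(\Zhat)$-intersection forces $G_E=\calH_E$.
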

\begin{proof}
The specialization of $\varrho_{\scrE_\calG,N}^*$ at $u$ is a representation $\Gal_K\to \bbar{\calG} \subseteq \GL_2(\ZZ/N\ZZ)$ isomorphic to $\rho_{(\scrE_\calG)_u,N}^*=\rho_{E',N}^*$.   So by replacing $\rho_{E',N}^*$ with an isomorphic representation, we may assume that it is the specialization of $\varrho_{\scrE_\calG,N}^*$ at $u$.  In particular, we have $\rho_{E',N}^*(\Gal_K)\subseteq \bbar{\calG}$.    We also have $\rho_{E'}^*(\Gal_K) \subseteq \calG$ since the level of $\calG$ divides $N$.  Since $E$ is the quadratic twist of $E'$ by $\chi$, we may assume that $\rho_{E}^* = \chi \cdot \rho_{E'}^*$ and hence also $\rho_{E,N}^* = \chi \cdot \rho_{E',N}^*$.   In particular, $\rho_{E,N}^*(\Gal_K)\subseteq \bbar{\calG}$ and $G_E=\rho_E^*(\Gal_K) \subseteq \calG$ since $-I\in \calG$.  

Now take any $G\in \calS_\calG$.  The homomorphism $\alpha_{G,E}$ agrees with the composition of $\rho_{E,N}^*\colon \Gal_K\to \bbar{\calG}$ with the quotient map $\bbar{\calG}\to \bbar{\calG}/\bbar{G}=\calG/G$.    Therefore, $\alpha_{G,E}(\Gal_{K^\cyc})$ is equal to the image of $\rho_E^*(\Gal_{K^\cyc})=G_E\cap \SL_2(\Zhat)$ in $\calG/G$, where we have used Lemma~\ref{L:KW}(\ref{L:KW i}).   So $\alpha_{G,E}(\Gal_{K^\cyc})=1$ if and only if $G_E \cap \SL_2(\Zhat) \subseteq G \cap \SL_2(\Zhat)$.  Thus to prove (\ref{P:nouveau G choice i}), it suffices to show that $[\calG,\calG] \subseteq G_E \cap \SL_2(\Zhat) \subseteq \calG \cap\SL_2(\Zhat)$ since any such group is of the form $G\cap \SL_2(\Zhat)$ for a unique $G\in \calS_\calG$.   The group $\calG$ is the agreeable closure of $G_E$ since it is conjugate to the agreeable closure and $G_E\subseteq \calG$.  Therefore, $[G_E,G_E]=[\calG,\calG]$   and hence  $[\calG,\calG] \subseteq G_E \cap \SL_2(\Zhat) \subseteq \calG\cap \SL_2(\Zhat)$ by Lemma~\ref{L:KW}(\ref{L:KW ii}).  

We may now suppose that $G$ is chosen as in (\ref{P:nouveau G choice i}).   We have just shown that $G\cap \SL_2(\Zhat)=G_E\cap \SL_2(\Zhat)$.  We have already made choices so that $G_E\subseteq \calG$.   For each $g\in G_E$, we have $\det g \in \det(G_E)=\chi_\cyc(\Gal_K)$.   Note that the existence and uniqueness of $\gamma_E$ is clear since $\chi_\cyc$ induces an isomorphism $\Gal(K^\cyc/K) \xrightarrow{\sim} \chi_\cyc(\Gal_K)$.

We claim that $gG=\gamma_E(\det g)$ for all $g\in G_E$. Take any $\sigma\in \Gal_K$.  From our identification $\bbar\calG/\bbar{G}=\calG/G$, $\rho_{E,N}^*(\sigma)\cdot \bbar{G}$ and $\rho_E^*(\sigma) \cdot G$ represent the same coset.  Therefore,  $\rho_E^*(\sigma) \cdot G = \alpha_{G,E}(\sigma) = \gamma_E(\chi_\cyc(\sigma)^{-1})$.   Since $\det \circ \rho_E^* = \chi_\cyc^{-1}|_{\Gal_K}$, we have $\rho_E^*(\sigma) \cdot G = \gamma_E(\det \rho_E^*(\sigma))$.  The claim follows since $\sigma$ was an arbitrary element of $\Gal_K$.

Using the claim, we have now shown that $G_E\subseteq \calH_E$.   Taking determinants gives $\chi_\cyc(\Gal_K) =\det(G_E) \subseteq \det(\calH_E) \subseteq \chi_\cyc(\Gal_K)$ and hence $\det(G_E) = \det(\calH_E)$.  We also have 
\[
\calH_E \cap \SL_2(\Zhat)= \{  g \in \calG   : g\in \SL_2(\Zhat),\, g G =  G\}=G\cap \SL_2(\Zhat)= G_E \cap \SL_2(\Zhat).
\]
Therefore, $G_E=\calH_E$ since $G_E$ is a subgroup of $\calH_E$ with the same determinant and the same intersection with $\SL_2(\Zhat)$.
\end{proof}

\subsection{Computing the Galois image for most elliptic curves}

For our algorithm, we first shall perform some one-time precomputations.  For each group $\calG\in \calA_1 \cup \calA_2$, we can compute a model for the curve $X_\calG$ and, with respect to this model, compute the morphism $\pi_\calG$.     For each $\calG \in \calA_1$, we can compute a set $\calS_\calG$ as in Theorem~\ref{T:nice supply of G}.   For each $\calG\in \calA_1$ and $G\in \calS_\calG$, we can compute $\alpha_G$ as in Proposition~\ref{P:alpha computable}.

Fix an explicit non-CM elliptic curve $E$ defined over a number field $K$ for which $j_E \notin J_K$ and $\rho_{E,\ell}(\Gal_K)\supseteq \SL_2(\ZZ/\ell\ZZ)$ for all primes $\ell>19$.  Note that the condition $j_E\notin J_K$ can be checked since the morphisms $\pi_\calG$ with $\calG\in \calA_2$ are computed already.  We will discuss the conditions on the $\rho_{E,\ell}$ in \S\ref{SS:loose ends 1}.  We can also compute the open group $\chi_\cyc(\Gal_K)\subseteq \Zhat^\times$.

Using Theorem~\ref{T:loose agreeable} and our precomputed modular curves, we can find a group $\calG\in \calA_1$ that is conjugate in $\GL_2(\Zhat)$ to the agreeable closure of $G_E=\rho^*_E(\Gal_K)$.   Choose a point $u\in U_\calG(K)$ for which $\pi_\calG(u)=j_E$.   Let $E'$ be the elliptic curve over $K$ defined by the equation (\ref{E:generic Weierstrass}) with $j$ replaced by $j_E$.    The curve $E$ is a quadratic twist of $E'$ by a computable character $\chi\colon \Gal_K \to \{\pm 1\}$ since $E$ is non-CM and $j_{E'}=j_E$.

Take any group $G\in \calS_\calG$.   We define $\alpha_{E,G}\colon \Gal_K\to \calG/G$ to be product of $\chi$ and the specialization of $\alpha_G$ at $u$; this is computable by using our precomputed $\alpha_G$.   We can find the group $G\in \calS_\calG$ that satisfies Proposition~\ref{P:nouveau G choice}(\ref{P:nouveau G choice i}); for the rest of the section, we work with this fixed group $G$.    There is a unique computable homomorphism $\gamma_E\colon \chi_\cyc(\Gal_K)\to \calG/G$ satisfying $\alpha_{G,E}(\sigma) = \gamma_E(\chi_\cyc(\sigma)^{-1})$ for all $\sigma\in \Gal_K$.

From $\calG$, $G$, $\chi_\cyc(\Gal_K)$ and $\gamma_E$, Proposition~\ref{P:nouveau G choice}(\ref{P:nouveau G choice ii}) gives an explicit subgroup $\calH_E$ of $\GL_2(\Zhat)$ that is conjugate to $G_E$.   This is the desired explicit computation of $G_E$ up to conjugacy.

\subsection{Loose ends 1: images modulo $\ell$ and uniformity} \label{SS:loose ends 1}

Consider a non-CM elliptic curve $E$ over a number field $K$.  A consequence of Theorem~\ref{T:Serre 1972}, and also one of the ingredients of its proof, is that $\rho_{E,\ell}(\Gal_K) \supseteq \SL_2(\ZZ/\ell\ZZ)$ for all primes $\ell > c_{E,K}$, where $c_{E,K}$ is a positive integer which we take to be minimal.  In the case $K=\QQ$, Serre asked whether $c_{E,\QQ}$ can be bounded independent of $E$, see~\cite[\S4.3]{Serre-Inv72} and the final remarks of \cite{MR644559} where he asks if $c_{E,\QQ}\leq 37$.   We formulate this as a conjecture over a general number field.

\begin{conj}[Serre uniformity problem] \label{C:uniformity}
For any number field $K$, the following equivalent conditions hold:
\begin{alphenum} 
\item \label{C:uniformity a}
There is a constant $c_K$ such that for any prime $\ell>c_K$ and any non-CM elliptic curves $E/K$, we have $\rho_{E,\ell}(\Gal_K)\supseteq \SL_2(\ZZ/\ell\ZZ)$.   
\item \label{C:uniformity b}
There is a finite set $\calJ_K \subseteq K$ such that for any prime $\ell>19$ and any non-CM elliptic curve $E/K$ with $j_E\notin \calJ_K$, we have $\rho_{E,\ell}(\Gal_K)\supseteq \SL_2(\ZZ/\ell\ZZ)$.
\end{alphenum}
\end{conj}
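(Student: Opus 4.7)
The plan is to prove the equivalence (a) $\Longleftrightarrow$ (b); the assertion that either condition actually holds is the open conjecture itself and is beyond the present scope.

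For (b) $\Longrightarrow$ (a), I would use quadratic twists. A non-CM elliptic curve over $K$ has $j$-invariant different from $0$ and $1728$, so any two non-CM curves $E, E'/K$ with $j_E = j_{E'}$ are quadratic twists of one another; equivalently, $\rho_{E',\ell}^* = \chi \cdot \rho_{E,\ell}^*$ for some character $\chi\colon \Gal_K\to \{\pm 1\}$. Since $-I \in \SL_2(\ZZ/\ell\ZZ)$ for every prime $\ell$, the image $\rho_{E,\ell}(\Gal_K)$ contains $\SL_2(\ZZ/\ell\ZZ)$ if and only if $\rho_{E',\ell}(\Gal_K)$ does. Given the finite set $\calJ_K$ from (b), I fix one non-CM representative $E_j/K$ for each $j \in \calJ_K$, apply Theorem~\ref{T:Serre 1972} to obtain a constant $c_{E_j}$ beyond which $\rho_{E_j,\ell}(\Gal_K) \supseteq \SL_2(\ZZ/\ell\ZZ)$, and take $c_K := \max(\{19\} \cup \{c_{E_j} : j\in \calJ_K\})$. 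The twist invariance together with (b) then yields (a) with this constant.

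For (a) $\Longrightarrow$ (b), assume (a) with constant $c_K$. For primes $\ell > c_K$ there is nothing to verify, so the task reduces to the finite set of primes $\ell$ with $19 < \ell \leq c_K$. Fix such an $\ell$. If $\rho_{E,\ell}(\Gal_K) \not\supseteq \SL_2(\ZZ/\ell\ZZ)$ for some non-CM $E/K$, the image sits inside a maximal subgroup $H \subseteq \GL_2(\ZZ/\ell\ZZ)$ that does not contain $\SL_2(\ZZ/\ell\ZZ)$; by Dickson's classification, $H$ is (the lift of) a Borel, the normalizer of a split or non-split Cartan, or an exceptional subgroup with projective image $A_4$, $S_4$, or $A_5$. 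A mod-$\ell$ version of Proposition~\ref{P:key property} then places $j_E$ in $\pi_H(X_H(K))$, where $X_H$ is the associated modular curve. The essential input is that each such $X_H$ has genus at least $2$ whenever $\ell > 19$: this is classical for $X_0(\ell)$, which has genus $\geq 2$ for $\ell \geq 23$; it holds for the split and non-split Cartan-normalizer curves $X_{\mathrm{sp}}^+(\ell)$ and $X_{\mathrm{ns}}^+(\ell)$ by the standard genus formulas of Baran and others; and it holds for the exceptional cases because the projective image has bounded order, so the covers of $X(1)$ have degree on the order of $\ell^3$. Faltings' theorem then gives $|X_H(K)| < \infty$ for each such $H$, and setting $\calJ_K := \bigcup_{(\ell, H)} \pi_H(X_H(K))$ over the finite collection of pairs with $19 < \ell \leq c_K$ produces the desired finite exceptional set.

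The main obstacle is assembling the uniform genus lower bound $g(X_H) \geq 2$ across every Dickson-type maximal subgroup $H$ of $\GL_2(\ZZ/\ell\ZZ)$ not containing $\SL_2(\ZZ/\ell\ZZ)$, for every prime $\ell > 19$; the numerical threshold $19$ appearing in condition (b) is calibrated precisely so that this uniform bound holds. With the genus estimates assembled, the implication (a) $\Longrightarrow$ (b) becomes a clean Faltings-plus-finite-union argument, and (b) $\Longrightarrow$ (a) reduces to the quadratic-twist observation combined with one invocation of Serre's open image theorem per $j \in \calJ_K$.
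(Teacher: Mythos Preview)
Your approach matches the paper's: twist invariance plus Serre's open image theorem for (b) $\Rightarrow$ (a), and modular curves of genus $\geq 2$ plus Faltings for (a) $\Rightarrow$ (b). Two points are worth flagging.

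First, in (b) $\Rightarrow$ (a) your justification ``since $-I\in\SL_2(\ZZ/\ell\ZZ)$'' does not by itself show that the condition $\rho_{E,\ell}(\Gal_K)\supseteq\SL_2(\ZZ/\ell\ZZ)$ is twist-invariant. From $\rho_{E',\ell}=\chi\cdot\rho_{E,\ell}$ one only gets $\pm\rho_{E,\ell}(\Gal_K)=\pm\rho_{E',\ell}(\Gal_K)$, and passing from $\pm G\supseteq\SL_2$ back to $G\supseteq\SL_2$ requires the further fact that $\SL_2(\ZZ/\ell\ZZ)$ has no subgroup of index $2$. The paper closes this gap by invoking $[\SL_2(\ZZ/\ell\ZZ),\SL_2(\ZZ/\ell\ZZ)]=\SL_2(\ZZ/\ell\ZZ)$ for $\ell>3$ (Lemma~\ref{L:comm}), so that $\SL_2\subseteq\pm G$ forces $\SL_2=[\SL_2,\SL_2]\subseteq[\pm G,\pm G]=[G,G]\subseteq G$.

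Second, for the genus input in (a) $\Rightarrow$ (b), the paper bypasses your Dickson case-analysis: it takes $\calG\subseteq\GL_2(\Zhat)$ of level $\ell$ whose image modulo $\ell$ is (the transpose of) $\pm\rho_{E,\ell}(\Gal_K)$ itself, and then cites the Cummins--Pauli tables \cite{MR2016709} to conclude that any congruence subgroup of genus $\leq 1$ has level divisible only by primes $\leq 19$, hence $X_\calG$ has genus $\geq 2$. Your route through Borel, Cartan-normalizer, and exceptional maximal subgroups reaches the same conclusion and is the classical way to see where the threshold $19$ comes from; the paper's citation is simply a one-line shortcut that handles all cases uniformly.
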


It is an important problem to determine the finite set of primes $\ell>19$ for which $\rho_{E,\ell}(\Gal_K)\not\supseteq \SL_2(\ZZ/\ell\ZZ)$.  For a fixed prime $\ell>19$, there are fast probabilistic methods of Sutherland \cite{MR3482279} to identify the image of $\rho_{E,\ell}$ up to a notion of local conjugacy.  Note that whenever the algorithm of \cite{MR3482279} predicts $\rho_{E,\ell}(\Gal_K)\supseteq \SL_2(\ZZ/\ell\ZZ)$, the result is guaranteed to be correct.   
 
 There are various bounds for $c_{E,K}$ in the literature.  For example, in \cite{MR1998390} one finds an explicit upper bound for $c_{E,K}$;  however, it is too large for use in practice.     Bounds for $c_{E,K}$ assuming GRH, like suggested in \cite{MR3161774}, should do better.
 
 In the case $K=\QQ$, \cite{surjectivityalgorithm} gives an efficient algorithm that computes a relatively small finite set of primes $S$ for which $\rho_{E,\ell}(\Gal_\QQ)\supseteq\SL_2(\ZZ/\ell\ZZ)$ for all $\ell>19$ with $\ell\notin S$ (one can then quickly address any primes in $S$).   An analogous algorithm over a general number field should be worked out.

\begin{proof}[Proof of the equivalence in Conjecture~\ref{C:uniformity}]
Take any prime $\ell>19$ and any non-CM elliptic curve $E/K$.  Since $\SL_2(\ZZ/\ell\ZZ)$ is equal to its own commutator subgroup, see Lemma~\ref{L:comm}(\ref{L:comm ii}),  we have $\rho_{E,\ell}(\Gal_K)\supseteq \SL_2(\ZZ/\ell\ZZ)$ if and only if $\pm \rho_{E,\ell}(\Gal_K)\supseteq \SL_2(\ZZ/\ell\ZZ)$.   Since $\pm \rho_{E,\ell}(\Gal_K)$, up to conjugacy, does not change if we replace $E/K$ by a quadratic twist and $E$ is non-CM,   we find that the condition $\rho_{E,\ell}(\Gal_K)\supseteq \SL_2(\ZZ/\ell\ZZ)$ depends only on $j_E$, $K$ and $\ell$.    Therefore, condition (\ref{C:uniformity b}) implies (\ref{C:uniformity a}) since for each $j\in \calJ_K$ that is the $j$-invariant of a non-CM elliptic curve $E/K$, we can conclude by Theorem~\ref{T:Serre 1972}.

We now assume that (\ref{C:uniformity a}) holds for some constant $c_K$.   Suppose that we have $\rho_{E,\ell}(\Gal_K) \not \supseteq \SL_2(\ZZ/\ell\ZZ)$ for some non-CM elliptic curve $E/K$ and prime $\ell>19$.   We have $19<\ell <c_K$.   Let $\calG$ be the open subgroup of $\GL_2(\Zhat)$ of level $\ell$ whose image modulo $\ell$ is the transpose of $\pm \rho_{E,\ell}(\Gal_K)$ (and hence does not contain $\SL_2(\ZZ/\ell\ZZ)$).    Using the classification in \cite{MR2016709} and $\ell>19$, the genus of $X_\calG$ is at least $2$. We have $j_E \in \pi_\calG(X_\calG(K))$ by Proposition~\ref{P:key property}.   Therefore, (\ref{C:uniformity b}) holds since only finitely many groups $\calG$ arise and $X_\calG(K)$ is finite by Faltings' theorem.
\end{proof}

\subsection{Loose ends 2: exceptional images}

In general, computing $G_E$ for an arbitrary non-CM elliptic curve $E$ over a number field is still an extremely difficult problem.   The fundamental reason being that \emph{every} open subgroup of $\GL_2(\Zhat)$ will occur as such an image (to prove this one need only show that $\GL_2(\Zhat)$ occurs, see \cite{MR2721742}).

For a fixed number field $K$, and assuming Conjecture~\ref{C:uniformity} for simplicity, we have shown how to compute $G_E$ for all non-CM elliptic curves $E/K$ whose $j$-invariant lies away from some finite subset of $K$.   What makes this proposed algorithm especially practical is that one need only compute a finite number of modular curves and this can be done ahead of time.   

For non-CM elliptic curves over $K$ with one of the excluded $j$-invariants, a similar approach works but requires more modular curves computations or ad hoc computations.  For how we dealt with this in the $K=\QQ$ case, see \S10.2 and \S12.3 of \cite{openimage}.

\section{Basic group theory}

In this section, we collect some basic group theory facts that will be used in our arguments.

\subsection{Goursat's lemma}

\begin{lemma}[Goursat's lemma, \cite{Ribet-76}*{Lemma 5.2.1}]  
\label{L:Goursat}
Let $G_1$ and $G_2$ be two groups and let $H$ be a subgroup of $G_1\times G_2$ so that the projection maps $p_1\colon H\to G_1$ and $p_2\colon H \to G_2$ are surjective.   Let $B_1$ and $B_2$ be the normal subgroups of $G_1$ and $G_2$, respectively, for which $\ker(p_2)=B_1\times \{1\}$ and $\ker(p_1)=\{1\}\times B_2$.    Then the image of $H$ in $(G_1\times G_2)/(B_1\times B_2)= G_1/B_1\times G_2/B_2$ is the graph of an isomorphism $G_1/B_1\xrightarrow{\sim} G_2/B_2$.
\end{lemma}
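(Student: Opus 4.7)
The plan is to establish three claims in sequence: (a) the subgroups $B_1$ and $B_2$ really are normal in $G_1$ and $G_2$; (b) the image $\bar H$ of $H$ in $(G_1/B_1)\times (G_2/B_2)$ is the graph of a set-theoretic function $\varphi\colon G_1/B_1\to G_2/B_2$; and (c) this $\varphi$ is a group isomorphism. The key input throughout will be the interplay between the surjectivity of the projections $p_i$ and the ``rectangular'' shape of their kernels on $H$.

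For (a), I would take $b_1\in B_1$ and an arbitrary $g_1\in G_1$, lift $g_1$ via the surjective map $p_1$ to some $(g_1,g_2)\in H$, and conjugate $(b_1,1)\in H$ by $(g_1,g_2)$; the result still lies in $H$ and has trivial second coordinate, hence lies in $B_1\times\{1\}$, forcing $g_1b_1g_1^{-1}\in B_1$. Normality of $B_2$ is symmetric. For (b), given $(a,c),(a',c')\in H$ with $aa'^{-1}\in B_1$, the element $(aa'^{-1},1)$ lies in $H$, so one can multiply $(a,c)$ by a suitable element of $B_1\times\{1\}$ to land at $(a',c)\in H$; then $(a',c)(a',c')^{-1}=(1,cc'^{-1})$ lies in $H$, forcing $cc'^{-1}\in B_2$. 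This shows the second coordinate of a pair in $\bar H$ is determined modulo $B_2$ by the first modulo $B_1$, so $\bar H$ is the graph of a well-defined function $\varphi$, automatically a homomorphism since $\bar H$ is a subgroup of the product.

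For (c), surjectivity of $\varphi$ is inherited from the surjectivity of $p_1$ restricted to $H$; for injectivity, if $\varphi(\bar a)=\bar 1$, pick a lift $(a,b_2)\in H$ with $b_2\in B_2$ and multiply by $(1,b_2^{-1})\in H$ to obtain $(a,1)\in H$, forcing $a\in B_1$ and hence $\bar a=\bar 1$. The main obstacle, such as it is, lies in step (b): one must unpack carefully how the hypothesis that both kernels are rectangular forces $\bar H$ to be single-valued rather than merely a subgroup with surjective projections. Once that is in place, the remainder is formal bookkeeping.
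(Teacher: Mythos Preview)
Your argument is correct and is the standard proof of Goursat's lemma. One small slip: in part (c) you write that surjectivity of $\varphi$ is inherited from surjectivity of $p_1$, but you mean $p_2$ (surjectivity of $p_1$ is what guarantees $\varphi$ is defined on all of $G_1/B_1$, which you implicitly use in (b); surjectivity onto $G_2/B_2$ needs $p_2$). With that corrected, the proof is complete.

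As for comparison with the paper: the paper does not supply its own proof of this lemma at all --- it simply cites Ribet \cite{Ribet-76}*{Lemma~5.2.1} and uses the statement as a black box. So there is nothing to compare against; your write-up would serve perfectly well as the omitted proof.
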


\subsection{Commutator subgroups}

\begin{lemma} \cite[Lemma 7.7]{openimage} \label{L:comm}
\begin{romanenum}
\item \label{L:comm ii}
The commutator subgroup of $\SL_2(\ZZ_\ell)$ is equal to $\SL_2(\ZZ_\ell)$ for all $\ell > 3$.
\item \label{L:comm iii}
The commutator subgroup of $\GL_2(\ZZ_\ell)$ is equal to $\SL_2(\ZZ_\ell)$ for all $\ell \geq 3$.
\item  \label{L:comm iv}
The commutator subgroup of $\SL_2(\ZZ_3)$ has level $3$ and index $3$.
\end{romanenum}
\end{lemma}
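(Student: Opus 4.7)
My plan exploits the commutator identity
\[
[D_a, u(x)] = u\bigl((a^2-1)x\bigr), \qquad D_a := \bigl(\begin{smallmatrix} a & 0 \\ 0 & a^{-1}\end{smallmatrix}\bigr),\ u(x) := \bigl(\begin{smallmatrix} 1 & x \\ 0 & 1\end{smallmatrix}\bigr),
\]
its transpose analogue with $v(x) := \bigl(\begin{smallmatrix} 1 & 0 \\ x & 1\end{smallmatrix}\bigr)$, and the standard fact that $\SL_2(\ZZ_\ell)$ is topologically generated by $u(\ZZ_\ell) \cup v(\ZZ_\ell)$.  For \emph{(i)}, when $\ell > 3$ I pick $a \in \ZZ_\ell^\times$ with $a \not\equiv \pm 1 \pmod{\ell}$, so that $a^2 - 1 \in \ZZ_\ell^\times$; the identity then exhibits every $u(y)$ and $v(y)$ with $y \in \ZZ_\ell$ as a commutator, hence the (topological) commutator subgroup of $\SL_2(\ZZ_\ell)$ contains a generating set.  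For \emph{(ii)}, the inclusion $[\GL_2(\ZZ_\ell), \GL_2(\ZZ_\ell)] \subseteq \SL_2(\ZZ_\ell)$ is immediate from determinants; the reverse follows from \emph{(i)} when $\ell > 3$, and when $\ell = 3$ from the $\GL_2$-analogue $[\mathrm{diag}(a,1), u(x)] = u((a-1)x)$ taken with $a = 2$.

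For \emph{(iii)}, I first determine $[\SL_2(\FF_3), \SL_2(\FF_3)]$.  Its image in $\PSL_2(\FF_3) \cong A_4$ is $[A_4, A_4] = V_4$, of index $3$, so the commutator subgroup lies in the preimage of $V_4$, which is the unique Sylow $2$-subgroup $Q_8 \subseteq \SL_2(\FF_3)$ (order $8$).  A short explicit check ($-I = [i,j]$ for a pair of quaternion generators $i,j$ of $Q_8$) shows $-I$ is itself a commutator, so $[\SL_2(\FF_3), \SL_2(\FF_3)] = Q_8$, of index $3$.  Writing $H := [\SL_2(\ZZ_3), \SL_2(\ZZ_3)]$ and $\pi \colon \SL_2(\ZZ_3) \to \SL_2(\FF_3)$, surjectivity of $\pi$ gives $\pi(H) = Q_8$, so $H \subseteq \widetilde{Q_8} := \pi^{-1}(Q_8)$, a subgroup of level $3$ and index $3$ in $\SL_2(\ZZ_3)$.

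The remaining task is to show $\widetilde{Q_8} \subseteq H$, i.e.\ the principal congruence kernel $K_1 := \ker \pi$ lies in $H$.  The $\SL_2$ identity from paragraph one, applied with $a \in 1 + 3\ZZ_3$ chosen so that $a^2 - 1 \in 3\ZZ_3^\times$, yields $u(3\ZZ_3), v(3\ZZ_3) \subseteq H$, so by normality $H$ contains the closed normal subgroup $N$ of $\SL_2(\ZZ_3)$ generated by $u(3)$.  Filter $K_1 \supseteq K_2 \supseteq \cdots$ by the higher principal congruence kernels and identify each graded piece $K_n/K_{n+1}$ with the adjoint module $\mathfrak{sl}_2(\FF_3)$ under $\SL_2(\FF_3)$-conjugation.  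Direct orbit-counting shows that $\mathfrak{sl}_2(\FF_3)$ is irreducible as an $\SL_2(\FF_3)$-module: the nonzero conjugation orbits have sizes $4,4,6,12$ (so no invariant line) and the only size-$8$ union, the two nilpotent orbits, is not closed under addition (so no invariant plane).  Since $u(3^n) = u(3)^{3^{n-1}} \in N \cap K_n$ represents $e_{12} \in K_n/K_{n+1}$, irreducibility plus normality force $N$ to surject onto every $K_n/K_{n+1}$; an inverse-limit argument using closedness of $N$ then upgrades this to $K_1 \subseteq N \subseteq H$.

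The main obstacle I anticipate is the lifting step in the last paragraph: promoting the mod-$3$ computation to the full $\SL_2(\ZZ_3)$-statement.  This needs the representation-theoretic input that $\mathfrak{sl}_2(\FF_3)$ is $\SL_2(\FF_3)$-irreducible together with a profinite limit argument.  Parts \emph{(i)}, \emph{(ii)}, and the finite-group computation in \emph{(iii)} are each essentially one-line applications of the commutator identity.
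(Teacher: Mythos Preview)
Your proof is correct. Note, however, that the paper does not actually prove this lemma: it simply cites \cite[Lemma~7.7]{openimage} and moves on. So there is no ``paper's proof'' to compare against here; you have supplied a self-contained argument where the paper defers to an external reference.

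Your approach is the standard elementary one. Parts (i) and (ii) are handled cleanly by the commutator identity $[D_a,u(x)]=u((a^2-1)x)$ and its $\GL_2$-variant, together with the fact that $\SL_2$ over a local ring is generated by elementary matrices. For part (iii), your mod-$3$ computation of $[\SL_2(\FF_3),\SL_2(\FF_3)]=Q_8$ is correct (in fact slightly more than needed: once you know the commutator subgroup sits inside $Q_8$ and surjects onto $V_4\subseteq A_4$, it must be all of $Q_8$, since no proper subgroup of $Q_8$ maps onto $V_4$). The lifting step---showing $K_1\subseteq H$ via irreducibility of $\mathfrak{sl}_2(\FF_3)$ as an $\SL_2(\FF_3)$-module and a successive-approximation argument through the filtration $K_1\supseteq K_2\supseteq\cdots$---is carried out correctly; your orbit count ($4,4,6,12$) is right, and the closedness of $N$ is exactly what makes the inverse-limit step go through, since $\bigcap_n NK_n=N$ for closed $N$. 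The net effect is that you have an honest proof where the paper has only a pointer.
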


\begin{lemma} \cite[Lemma 7.10]{openimage} \label{L:openness of commutator}
Let $G$ be an open subgroup of $\GL_2(\Zhat)$ or $\SL_2(\Zhat)$.  Then the commutator subgroup $[G,G]$ is an open subgroup of $\SL_2(\Zhat)$.
\end{lemma}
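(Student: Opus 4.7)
The plan is to reduce to a principal congruence subgroup, split into local components, and then handle finitely many ``bad'' local factors separately. Since the $\SL_2$ case is entirely analogous (and in fact easier), I would focus on the case $G\subseteq \GL_2(\Zhat)$. Because $G$ is open, there is an integer $N\geq 1$ such that $G$ contains the principal congruence subgroup $K_N:=\ker(\GL_2(\Zhat)\to\GL_2(\ZZ/N\ZZ))$. The inclusion $[K_N,K_N]\subseteq [G,G]$ reduces the claim to showing that $[K_N,K_N]$ is open in $\SL_2(\Zhat)$.

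Using the decomposition $\GL_2(\Zhat)=\prod_\ell\GL_2(\ZZ_\ell)$, I would write $K_N=\prod_\ell K_{N,\ell}$, where $K_{N,\ell}=\GL_2(\ZZ_\ell)$ for $\ell\nmid N$ and $K_{N,\ell}$ is the kernel of the reduction $\GL_2(\ZZ_\ell)\to\GL_2(\ZZ/\ell^{v_\ell(N)}\ZZ)$ otherwise. A standard fact about topological commutators of products of profinite groups (which follows from approximating by commutators supported on finitely many coordinates) gives $[K_N,K_N]=\prod_\ell [K_{N,\ell},K_{N,\ell}]$. For each prime $\ell>3$ with $\ell\nmid N$, Lemma~\ref{L:comm}(\ref{L:comm iii}) immediately yields $[K_{N,\ell},K_{N,\ell}]=\SL_2(\ZZ_\ell)$, so these factors need no further work.

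The main obstacle is the local statement at the finitely many remaining primes $\ell\in S:=\{2,3\}\cup\{\ell:\ell\mid N\}$: for such $\ell$, I need to show that $[K_{N,\ell},K_{N,\ell}]$ is open in $\SL_2(\ZZ_\ell)$. I would pick $e\geq 1$ large enough (e.g.\ $e\geq 1$ for odd $\ell$, $e\geq 2$ for $\ell=2$) that $K_{N,\ell}$ contains the uniform pro-$\ell$ congruence subgroup $H_e:=\ker(\GL_2(\ZZ_\ell)\to\GL_2(\ZZ/\ell^e\ZZ))$ and expand
\[
[I+\ell^e A,\,I+\ell^e B]=I+\ell^{2e}[A,B]+O(\ell^{3e})
\]
for $A,B\in M_2(\ZZ_\ell)$. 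Since $\{[A,B]:A,B\in M_2(\ZZ_\ell)\}$ spans $\mathfrak{sl}_2(\ZZ_\ell)$ as a $\ZZ_\ell$-module, a successive approximation argument (Hensel-style, using that $H_e$ is a uniform pro-$\ell$ group in bijection with $\ell^e\mathfrak{gl}_2(\ZZ_\ell)$ via $\log$) shows that $[H_e,H_e]$ contains the kernel of reduction modulo $\ell^{2e+c}$ in $\SL_2(\ZZ_\ell)$ for some constant $c$; this is the group-theoretic incarnation of the Lie-algebra identity $[\mathfrak{gl}_2,\mathfrak{gl}_2]=\mathfrak{sl}_2$.

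Combining these local inputs, $[K_N,K_N]$ contains $\prod_{\ell\notin S}\SL_2(\ZZ_\ell)$ together with an open subgroup of $\SL_2(\ZZ_\ell)$ at each $\ell\in S$, which is itself open in $\SL_2(\Zhat)$. Hence $[G,G]\supseteq [K_N,K_N]$ is open in $\SL_2(\Zhat)$, completing the proof. I expect the Lie-theoretic/approximation step at the bad primes to be the only nontrivial point; everything else is a formal unpacking of the profinite product structure and a direct appeal to Lemma~\ref{L:comm}.
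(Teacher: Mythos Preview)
The paper does not prove this lemma; it is quoted from \cite{openimage} (Lemma~7.10 there) without argument, so there is nothing in the present paper to compare your proof against.

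Your approach is correct and standard: reduce to a principal congruence subgroup $K_N\subseteq G$, factor $[K_N,K_N]=\prod_\ell[K_{N,\ell},K_{N,\ell}]$ (valid for topological commutator subgroups of products of profinite groups), invoke Lemma~\ref{L:comm}(\ref{L:comm iii}) at primes $\ell\nmid 6N$ to get the full $\SL_2(\ZZ_\ell)$, and at the finitely many remaining primes use the expansion $[I+\ell^e A,\,I+\ell^e B]\equiv I+\ell^{2e}[A,B]\pmod{\ell^{3e}}$ together with $[\mathfrak{gl}_2,\mathfrak{gl}_2]=\mathfrak{sl}_2$ to run a successive-approximation argument showing $[H_e,H_e]$ contains a principal congruence subgroup of $\SL_2(\ZZ_\ell)$. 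The only point that deserves an extra sentence is the induction at the bad primes: since every product of commutators has determinant $1$, each stage of the approximation and its error term automatically lie in $\SL_2(\ZZ_\ell)$, so the target $I+\ell^{2e}X$ with $\det(I+\ell^{2e}X)=1$ forces $\operatorname{tr}(X)\equiv 0\pmod{\ell^{2e}}$ and the successive corrections converge inside $\SL_2(\ZZ_\ell)$ as claimed.
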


\begin{lemma} \label{L:Well properties}
Take any prime $\ell \geq 3$.  
\begin{romanenum}
\item
There is a unique closed normal subgroup $W_\ell$ of $\SL_2(\ZZ_\ell)$ for which $\SL_2(\ZZ_\ell)/W_\ell$ is a simple group.  
\item
Suppose $\ell > 3$.  Then the group $W_\ell$ consists of the matrices in $\SL_2(\ZZ_\ell)$ whose image modulo $\ell$ are $\pm I$.   We have $\SL_2(\ZZ_\ell)/W_\ell \cong \SL_2(\FF_\ell)/\{\pm I\}$.
\item
The group $W_3$ is the commutator subgroup of $\SL_2(\ZZ_3)$ and $\SL_2(\ZZ_3)/W_3$ is cyclic of order $3$.  
\end{romanenum}
\end{lemma}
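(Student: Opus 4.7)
The plan is to analyze any closed normal subgroup $N$ of $\SL_2(\ZZ_\ell)$ with simple quotient $Q$, using two ingredients: the kernel $K_\ell$ of the reduction map $\SL_2(\ZZ_\ell) \to \SL_2(\FF_\ell)$ is a pro-$\ell$ group for $\ell \geq 3$ (a standard consequence of the fact that the successive kernels $\SL_2(\ZZ/\ell^{n+1}\ZZ) \to \SL_2(\ZZ/\ell^n\ZZ)$ are elementary abelian $\ell$-groups), and the classification of normal subgroups of $\SL_2(\FF_\ell)$. Note that any simple quotient $Q$ is automatically finite, since any infinite profinite group has a nontrivial proper open normal subgroup.

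For existence in (ii), I would take $W_\ell$ to be the preimage of $\{\pm I\} \subseteq \SL_2(\FF_\ell)$; its quotient is $\PSL_2(\FF_\ell)$, which is a nonabelian finite simple group for $\ell \geq 5$. For (iii), Lemma~\ref{L:comm}(\ref{L:comm iv}) exhibits $W_3 := [\SL_2(\ZZ_3), \SL_2(\ZZ_3)]$ as a closed normal subgroup of index $3$ with simple quotient $\ZZ/3\ZZ$.

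For uniqueness, given $N$ with simple quotient $Q$, the image of $K_\ell$ in $Q$ is a normal subgroup of the simple group $Q$, hence either $\{1\}$ or $Q$. In the first case, $K_\ell \subseteq N$, so $N$ is the preimage of a normal subgroup $\bar N \subseteq \SL_2(\FF_\ell)$ with simple quotient. For $\ell \geq 5$ the normal subgroups of $\SL_2(\FF_\ell)$ are $\{I\}$, $\{\pm I\}$, and $\SL_2(\FF_\ell)$, forcing $\bar N = \{\pm I\}$ and $N = W_\ell$. For $\ell = 3$, one checks that the normal subgroups of $\SL_2(\FF_3)$ (a group of order $24$) are $\{I\}$, $\{\pm I\}$, the unique Sylow-$2$ subgroup $Q_8$, and $\SL_2(\FF_3)$ itself; only $\bar N = Q_8$ gives a simple quotient, and since $Q_8 = [\SL_2(\FF_3), \SL_2(\FF_3)]$ the preimage of $Q_8$ equals $[\SL_2(\ZZ_3), \SL_2(\ZZ_3)] = W_3$ (both have index $3$). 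In the second case, $Q \cong K_\ell/(K_\ell \cap N)$ is a finite simple quotient of the pro-$\ell$ group $K_\ell$, hence $Q \cong \ZZ/\ell\ZZ$; this produces an abelian quotient of $\SL_2(\ZZ_\ell)$ of order $\ell$, which is impossible for $\ell > 3$ by Lemma~\ref{L:comm}(\ref{L:comm ii}), and for $\ell = 3$ forces $N$ to be an index-$3$ normal subgroup, which must equal the commutator $W_3$. The main routine obstacle is verifying the classification of normal subgroups of $\SL_2(\FF_3)$ together with the identification of $[\SL_2(\ZZ_3), \SL_2(\ZZ_3)]$ with the preimage of $Q_8$ under reduction modulo $3$; once these are in hand, the dichotomy on the image of $K_\ell$ cleanly dispatches all cases.
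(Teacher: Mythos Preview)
Your proof is correct and follows essentially the same approach as the paper: both arguments hinge on the pro-$\ell$ nature of the kernel of reduction modulo $\ell$ together with the simplicity of $\PSL_2(\FF_\ell)$ for $\ell\geq 5$ and the fact that $\SL_2(\ZZ_3)/[\SL_2(\ZZ_3),\SL_2(\ZZ_3)]\cong\ZZ/3\ZZ$. The paper's treatment of $\ell=3$ is slightly slicker (observing directly that $\SL_2(\ZZ_3)$ is prosolvable, so any simple quotient is cyclic of prime order), whereas you carry the $K_\ell$ dichotomy through and invoke the explicit normal subgroup lattice of $\SL_2(\FF_3)$, but this is a cosmetic difference.
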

\begin{proof}
Let $Q$ be a finite simple group that is a quotient of $\SL_2(\ZZ_\ell)$ by a closed normal subgroup.

Suppose $\ell>3$.     The simple group $Q$ is nonabelian by Lemma~\ref{L:comm}(\ref{L:comm ii}).  Since pro-$\ell$ groups are prosolvable and $Q$ is simple and nonabelian, we find that any continuous surjective homomorphism $\SL_2(\ZZ_\ell)\twoheadrightarrow Q$ factors through $\SL_2(\ZZ/\ell\ZZ)/\{\pm I\} \twoheadrightarrow Q$.  The lemma is immediate in this case since $\SL_2(\ZZ/\ell\ZZ)/\{\pm I\}$ is simple.

The group $\SL_2(\ZZ_3)$ is prosolvable since pro-$3$ groups are prosolvable and $\SL_2(\ZZ/3\ZZ)$ is solvable.  Therefore, $Q$ is a cyclic group of prime order.  The lemma for $\ell=3$ follows since the quotient group $\SL_2(\ZZ_3)/[\SL_2(\ZZ_3),\SL_2(\ZZ_3)]$ is isomorphic to $\ZZ/3\ZZ$ by Lemma~\ref{L:comm}(\ref{L:comm iv}).  
\end{proof}

\begin{lemma} \label{L:old agreeable}
Let $G$ be an open subgroup of $\GL_2(\Zhat)$ or $\SL_2(\Zhat)$.   Take any prime $\ell>5$.  Then $G_\ell \supseteq \SL_2(\ZZ_\ell)$ if and only if $\ell$ does not divide the level of $[G,G]$.  %\subseteq \SL_2(\Zhat)$.
\end{lemma}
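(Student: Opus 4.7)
The plan is to reformulate both sides in terms of the subgroup $U_\ell \subseteq \SL_2(\Zhat)$ consisting of matrices whose $p$-component is $I$ for every prime $p \neq \ell$ (so $U_\ell \cong \SL_2(\ZZ_\ell)$ and $U_\ell$ is normal in $\GL_2(\Zhat)$). Since $[G,G]$ is open in $\SL_2(\Zhat)$ by Lemma~\ref{L:openness of commutator}, a short argument passing between congruence kernels mod integers coprime to $\ell$ and $U_\ell$ shows that $\ell$ does not divide the level of $[G,G]$ if and only if $[G,G] \supseteq U_\ell$. The forward direction of the lemma is then immediate: projecting to the $\ell$-component gives $G_\ell \supseteq \SL_2(\ZZ_\ell)$. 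For the reverse direction, I will assume $G_\ell \supseteq \SL_2(\ZZ_\ell)$ and first show $G \supseteq U_\ell$; Lemma~\ref{L:comm}(\ref{L:comm ii}) (applied with $\ell > 3$) then yields $[G,G] \supseteq [G, U_\ell] \supseteq [U_\ell, U_\ell] = U_\ell$.

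To show $G \supseteq U_\ell$, set $N := G \cap U_\ell$, an open subgroup of $U_\ell$. Because $U_\ell$ is normal in $\GL_2(\Zhat)$, the conjugation action of $G$ on $U_\ell$ factors through $G_\ell \supseteq \SL_2(\ZZ_\ell)$ and preserves $N$, making $N$ a closed normal subgroup of $\SL_2(\ZZ_\ell)$. By Lemma~\ref{L:Well properties}, either $N = \SL_2(\ZZ_\ell)$ (which gives $G \supseteq U_\ell$, as desired) or $N \subseteq W_\ell$, in which case $\SL_2(\ZZ_\ell)/N$ admits the simple non-abelian quotient $\PSL_2(\FF_\ell)$. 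I will exclude the latter by applying Goursat's lemma (Lemma~\ref{L:Goursat}) to $G \hookrightarrow G_\ell \times G_{\ell'}$, where $G_{\ell'}$ is the image of $G$ in $\prod_{p \neq \ell} \GL_2(\ZZ_p)$. This produces normal subgroups $B_\ell \lhd G_\ell$ and $B_{\ell'} \lhd G_{\ell'}$ with $G_\ell/B_\ell \cong G_{\ell'}/B_{\ell'}$, and under the identification $U_\ell \cong \SL_2(\ZZ_\ell)$ one finds $B_\ell \cap \SL_2(\ZZ_\ell) = N$. The subgroup $\SL_2(\ZZ_\ell) B_\ell/B_\ell \cong \SL_2(\ZZ_\ell)/N$ of $G_\ell/B_\ell$ then surjects onto $\PSL_2(\FF_\ell)$, so $\PSL_2(\FF_\ell)$ is a subquotient of $G_{\ell'}$.

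The hard part is ruling out this subquotient, and this is where the assumption $\ell > 5$ is used. A standard approximation argument (intersecting the closed subgroups realizing $\PSL_2(\FF_\ell)$ with a sufficiently small open normal subgroup of $G_{\ell'}$) shows that any finite simple non-abelian subquotient of the profinite group $G_{\ell'}$ is a composition factor of some finite quotient of $G_{\ell'}$, which in turn embeds into a finite product of finite quotients of the groups $\GL_2(\ZZ_p)$ with $p \neq \ell$. For $p \in \{2,3\}$, every finite quotient of $\GL_2(\ZZ_p)$ is solvable (the reduction $\GL_2(\ZZ_p) \to \GL_2(\FF_p)$ has solvable target and pro-$p$ kernel). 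For $p \geq 5$, the only non-abelian composition factor of $\GL_2(\ZZ_p)$ is $\PSL_2(\FF_p)$. Since $|\PSL_2(\FF_q)| = q(q^2-1)/2$ is strictly increasing in $q \geq 2$, the hypothesis $\ell > 5$ forces $\PSL_2(\FF_\ell) \not\cong \PSL_2(\FF_p)$ for every prime $p \neq \ell$ with $p \geq 5$. Hence $\PSL_2(\FF_\ell)$ cannot be a composition factor of any finite quotient of $G_{\ell'}$, which contradicts the conclusion of the Goursat argument.
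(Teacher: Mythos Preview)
Your proof is correct and follows essentially the same strategy as the paper: both use Goursat's lemma together with Lemma~\ref{L:Well properties} to produce $\PSL_2(\FF_\ell)$ as a subquotient on the ``away from $\ell$'' side, and then rule this out by analyzing the non-abelian composition factors of $\GL_2(\ZZ_p)$ for $p\neq\ell$.

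The organization differs in two minor but pleasant ways. The paper first replaces $G$ by $[G,G]$ to reduce to the case $G\subseteq\SL_2(\Zhat)$ with $G_\ell=\SL_2(\ZZ_\ell)$, and then shows $G=H\times\SL_2(\ZZ_\ell)$ as a product. You instead work directly with $G$ and prove $G\supseteq U_\ell$ via the observation that $N=G\cap U_\ell$ is normal in $\SL_2(\ZZ_\ell)$ (since conjugation by $G$ on $U_\ell$ factors through $G_\ell\supseteq\SL_2(\ZZ_\ell)$); this sidesteps the reduction step entirely. Second, where the paper invokes Serre's computation of $\operatorname{Occ}(\GL_2(\ZZ_p))$ for closed subgroups (which is why $A_5$ appears in the paper's argument), you argue more directly that $\PSL_2(\FF_\ell)$ must be a composition factor of some finite quotient of $\GL_2(\ZZ_p)$ itself, for which the only non-abelian possibility is $\PSL_2(\FF_p)$; this makes the endgame self-contained and explains cleanly why $\ell>5$ suffices.
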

\begin{proof}
First suppose that $\ell$ does not divide the level of $[G,G]\subseteq \SL_2(\Zhat)$. Then $G_\ell \supseteq [G,G]_\ell \supseteq \SL_2(\ZZ_\ell)$.

Now suppose that $G_\ell \supseteq \SL_2(\ZZ_\ell)$.  Define $G'=[G,G]$; it is an open subgroup of $\SL_2(\Zhat)$ by Lemma~\ref{L:openness of commutator}.   We have $G'_\ell=[G_\ell,G_\ell] \supseteq \SL_2(\ZZ_\ell)$ by Lemma~\ref{L:comm}(\ref{L:comm ii}).   The level of $[G',G']$ divides the level of the larger group $[G,G]$.   So after replacing $G$ by $G'$, we may assume that $G\subseteq \SL_2(\Zhat)$ and that $G_\ell=\SL_2(\ZZ_\ell)$.   Let $H$ be the image of $G$ under the projection map to $\prod_{p\neq \ell} \SL_2(\ZZ_p)$.  We may view $G$ as a subgroup of $H \times \SL_2(\ZZ_\ell)$ for which the projections to the factors $H$ and $\SL_2(\ZZ_\ell)$ are surjective.   By Goursat's lemma (Lemma~\ref{L:Goursat}), we have $B_1\times B_2 \subseteq G$ and $H/B_1\cong \SL_2(\ZZ_\ell)/B_2$, where $B_1$ and $B_2$ are certain normal subgroup of $H$ and $\SL_2(\ZZ_\ell)$, respectively.  In our case, the groups $B_1$ and $B_2$ are also closed.   

Suppose that $B_2\neq \SL_2(\ZZ_\ell)$.  By Lemma~\ref{L:Well properties}, the simple group $Q:=\SL_2(\FF_\ell)/\{\pm I\}$ is isomorphic to a quotient of $\SL_2(\ZZ_\ell)/B_2\cong H/B_1$.   Therefore, $Q$ is a quotient of $H_p$ for some prime $p\neq \ell$, where $H_p$ is a closed subgroup of $\GL_2(\ZZ_p)$.   The group $Q$ is not isomorphic to either of the groups $\SL_2(\FF_p)/\{\pm I\}$ or $A_5$ by cardinality considerations.    However, this contradicts the computation of the sets ``$\operatorname{Occ}(\GL_2(\ZZ_\ell))$'' in \cite[IV \S3.4]{Serre-abelian}.   

Therefore, $B_2=\SL_2(\ZZ_\ell)$ and hence also $B_1=H$.  From the inclusions $H\times \SL_2(\ZZ_\ell) \supseteq G \supseteq B_1 \times B_2$, we deduce that $G=H\times \SL_2(\ZZ_\ell)$.  Therefore, $[G,G]=[H,H] \times \SL_2(\ZZ_\ell)$ by Lemma~\ref{L:comm}(\ref{L:comm ii}) and hence $\ell$ does not divide the level of $[G,G]$.
\end{proof}

\begin{lemma} \label{L:Serre mod ell to ell-adic}
Fix a prime $\ell\geq 5$ and let $G$ be a closed subgroup of $\GL_2(\ZZ_\ell)$.  Then $G\supseteq \SL_2(\ZZ_\ell)$ if and only if the image of $G$ modulo $\ell$ contains $\SL_2(\ZZ/\ell\ZZ)$.  
\end{lemma}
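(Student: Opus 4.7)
The plan is to reduce to a statement about $\SL_2(\ZZ_\ell)$ alone and then iterate along the principal congruence filtration. The ``only if'' direction is immediate since reduction modulo $\ell$ preserves containment. For the ``if'' direction, first replace $G$ by $H := G \cap \SL_2(\ZZ_\ell)$ and show $\bar H = \SL_2(\FF_\ell)$. Since $\ell \geq 5$, the group $\SL_2(\FF_\ell)$ is perfect (the mod-$\ell$ analogue of Lemma~\ref{L:comm}(\ref{L:comm ii})), so every element of $\SL_2(\FF_\ell) \subseteq \bar G$ is a product of commutators of elements of $\bar G$.  Lifting each commutator factor arbitrarily to $G$ and taking the corresponding product of commutators produces an element of $G$ that lies in $H$ (since commutators have determinant one) and reduces to the chosen element.

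It therefore suffices to prove: a closed subgroup $H \subseteq \SL_2(\ZZ_\ell)$ with $\bar H = \SL_2(\FF_\ell)$ equals $\SL_2(\ZZ_\ell)$. For each $n \geq 1$, let $K_n := \ker(\SL_2(\ZZ_\ell) \to \SL_2(\ZZ/\ell^n\ZZ))$. Because $H$ is closed and $\bigcap_n K_n = \{I\}$, it suffices to prove by induction that $HK_n = \SL_2(\ZZ_\ell)$ for all $n \geq 1$, the case $n = 1$ being the hypothesis.

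For the inductive step, the conjugation action of $\SL_2(\ZZ_\ell)$ on $K_n/K_{n+1}$ factors through reduction modulo $\ell$ and identifies this quotient with $\mathfrak{sl}_2(\FF_\ell)$ carrying the adjoint $\SL_2(\FF_\ell)$-action, which is irreducible for $\ell \geq 5$. The subgroup $M := (H \cap K_n)K_{n+1}/K_{n+1}$ of $K_n/K_{n+1}$ is normalized by $H$, hence by $\bar H = \SL_2(\FF_\ell)$, so $M$ is either trivial or all of $K_n/K_{n+1}$. In the latter case $HK_{n+1} \supseteq (H \cap K_n)K_{n+1} = K_n$, so $HK_{n+1} \supseteq HK_n = \SL_2(\ZZ_\ell)$.

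The hardest step is ruling out $M = 0$. If $M = 0$, then $H \cap K_n \subseteq K_{n+1}$, which forces the image of $H$ in $\SL_2(\ZZ/\ell^{n+1}\ZZ)$ to be a complement of $K_n/K_{n+1}$ and thereby produces a section of
\[
1 \to K_n/K_{n+1} \to \SL_2(\ZZ/\ell^{n+1}\ZZ) \to \SL_2(\ZZ/\ell^n\ZZ) \to 1.
\]
I would contradict this by showing that for $\ell \geq 5$ the unipotent element $u = \left(\begin{smallmatrix} 1 & 1 \\ 0 & 1 \end{smallmatrix}\right) \in \SL_2(\ZZ/\ell^n\ZZ)$, which has order $\ell^n$, admits no lift to $\SL_2(\ZZ/\ell^{n+1}\ZZ)$ of order $\ell^n$: writing any lift as $\tilde u = u_0(I+\ell^n B)$ with $u_0$ the standard lift and $B \in M_2(\FF_\ell)$, a direct computation gives $\tilde u^{\ell^n} \equiv u_0^{\ell^n}\bigl(I + \ell^n \sum_{i=0}^{\ell^n-1} u_0^{-i}Bu_0^i\bigr) \pmod{\ell^{n+1}}$, and the inner sum vanishes in $M_2(\FF_\ell)$ because the relevant arithmetic sums $\sum i$ and $\sum i^2$ are divisible by $\ell$ (this is where $\ell \geq 5$ is used, to invert $6$). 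Hence $\tilde u^{\ell^n} \equiv u_0^{\ell^n} = I + \ell^n E_{12} \not\equiv I$, no section exists, $M$ cannot be $0$, and the induction is complete.
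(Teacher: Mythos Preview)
Your argument is correct. The reduction from $\GL_2$ to $\SL_2$ is essentially the same as the paper's: the paper replaces $G$ by $[G,G]$ and invokes Lemma~\ref{L:comm}(\ref{L:comm ii}), while you pass to $G\cap\SL_2(\ZZ_\ell)$ and use perfectness of $\SL_2(\FF_\ell)$ to see that its mod-$\ell$ image is still full---these are two phrasings of the same idea. The genuine difference is in the $\SL_2$ step. The paper simply cites \cite[IV~\S3.4, Lemma~3]{Serre-abelian} and is done in one line; you instead supply a self-contained proof, climbing the principal congruence filtration using irreducibility of the adjoint module $\mathfrak{sl}_2(\FF_\ell)$ and then ruling out a splitting of $\SL_2(\ZZ/\ell^{n+1}\ZZ)\twoheadrightarrow\SL_2(\ZZ/\ell^n\ZZ)$ by the explicit order computation for lifts of the standard unipotent (the point being that $\sum_{i<\ell^n} i$ and $\sum_{i<\ell^n} i^2$ vanish mod $\ell$ once $\ell\geq 5$, so every lift of $u$ still has $\tilde u^{\ell^n}\equiv I+\ell^n E_{12}\not\equiv I$). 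Your route is longer but has the virtue of being elementary and transparent about exactly where $\ell\geq 5$ enters, whereas the paper's citation keeps the exposition short at the cost of a black box.
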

\begin{proof}
After replacing $G$ by $[G,G]$, and using Lemma~\ref{L:comm}(\ref{L:comm ii}), we may assume that $G\subseteq \SL_2(\ZZ_\ell)$.  The lemma now follows from \cite[IV \S3.4 Lemma 3]{Serre-abelian}.
\end{proof}

\subsection{Determining the level of groups}

The following lemmas give cases where we can show that a subgroup of $\GL_2(\ZZ_N)$ is open and also give a bound on its level.

\begin{lemma} \cite[Lemma 7.6]{openimage} \label{L:level by adjoining scalars}
Fix an integer $N>1$ with $N\not\equiv 2 \pmod{4}$.  Let $G$ be a subgroup of $\GL_2(\ZZ_N)$ for which $G \cap \SL_2(\ZZ_N)$ is an open subgroup of $\SL_2(\ZZ_N)$ whose level divides $N$. Define $N_1:=N$ if $N$ is odd and $N_1:=2N$ if $N$ is even.   Then $\ZZ_N^\times \cdot G$ is an open subgroup of $\GL_2(\ZZ_N)$ whose level divides $N_1$.
\end{lemma}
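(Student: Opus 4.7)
Write $H := \ZZ_N^\times \cdot G$ for the set of products $cA$ with $c \in \ZZ_N^\times$ (identified with the scalar matrix $cI$) and $A\in G$. Since scalar matrices are central, $H$ is closed under products and inverses, so it is a subgroup of $\GL_2(\ZZ_N)$. Observe that $N_1$ divides a power of $N$ (trivially if $N$ is odd; and if $N$ is even then $2\mid N$ so $N_1 = 2N \mid N^2$), so talking about the level with respect to $N_1$ makes sense. The plan is to prove the single containment
\[
\Gamma(N_1) \;:=\; \ker\!\bigl(\GL_2(\ZZ_N)\to\GL_2(\ZZ/N_1\ZZ)\bigr)\;\subseteq\; H,
\]
which immediately gives both openness of $H$ and the bound on its level.

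Let $K := G\cap \SL_2(\ZZ_N)$, which by hypothesis contains the kernel $\Gamma_{\SL}(N)$ of $\SL_2(\ZZ_N)\to \SL_2(\ZZ/N\ZZ)$. Take any $A\in \Gamma(N_1)$ and set $d := \det A \in \ZZ_N^\times$. Then $d \equiv 1\pmod{N_1}$. The heart of the argument will be the following square-root claim:
\medskip

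\noindent\emph{Claim.} There exists $c \in \ZZ_N^\times$ with $c^2 = d$ and $c \equiv 1\pmod N$.
\medskip

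\noindent Granting the claim, set $B := c^{-1}A$. Then $\det B = 1$ and $B \equiv c^{-1}\cdot I \equiv I\pmod N$ (using $A\equiv I\pmod{N_1}$, a fortiori $\pmod N$, and $c\equiv 1\pmod N$). So $B \in \Gamma_{\SL}(N) \subseteq K \subseteq G$, and $A = c\,B \in \ZZ_N^\times\cdot G = H$, as desired.

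It remains to establish the claim, which I would prove componentwise using $\ZZ_N^\times = \prod_{\ell\mid N}\ZZ_\ell^\times$. For each odd prime $\ell\mid N$, the group $1 + \ell\ZZ_\ell$ is pro-$\ell$ hence uniquely $2$-divisible, so the $\ell$-component $d_\ell \in 1+\ell\ZZ_\ell$ (using $d\equiv 1\pmod{N_1}$, and $N_1$ shares the same odd part as $N$) admits a unique square root in $1+\ell\ZZ_\ell$. This already handles the full claim when $N$ is odd, in which case $N_1=N$. When $N$ is even, our hypothesis $N\not\equiv 2\pmod 4$ forces $r := v_2(N)\geq 2$, and we need to produce a square root of $d_2 \in 1 + 2^{r+1}\ZZ_2$ inside $1+2^r\ZZ_2$. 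Via the $2$-adic logarithm, $1+4\ZZ_2 \xrightarrow{\sim}\ZZ_2$ as topological groups, with $1+2^s\ZZ_2$ corresponding to $2^{s-2}\ZZ_2$ for $s\geq 2$. Squaring on $1+4\ZZ_2$ thus corresponds to multiplication by $2$ on $\ZZ_2$, which sends $2^{r-2}\ZZ_2$ onto $2^{r-1}\ZZ_2$ — that is, $(1+2^r\ZZ_2)^2 = 1+2^{r+1}\ZZ_2$ whenever $r\geq 2$. This yields the required square root and completes the claim.

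The main obstacle is the $2$-adic case of the claim, which is precisely where the hypothesis $N\not\equiv 2\pmod 4$ is used: if instead $v_2(N) = 1$ we would need to square-root elements of $1+4\ZZ_2$ inside $1+2\ZZ_2$, but e.g.\ $5\in 1+4\ZZ_2$ has no square root in $\ZZ_2$, so neither the claim nor the statement of the lemma would hold in that regime; the shift to $N_1 = 2N$ in the even case is exactly calibrated so that $r\geq 2$ makes the square-root step succeed on the nose.
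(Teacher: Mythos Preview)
Your argument is correct. The paper does not give its own proof of this lemma---it is quoted verbatim as \cite[Lemma~7.6]{openimage}---so there is nothing to compare against here, but your square-root strategy is exactly the mechanism the paper itself invokes in related places (see the proof of Theorem~\ref{T:G existence}, where the identities $(1+\ell^{e_\ell}\ZZ_\ell)^2 = 1+\ell^{e_\ell}\ZZ_\ell$ for odd $\ell$ and $(1+N_2\ZZ_N)^2 = 1+N_1\ZZ_N$ appear explicitly).

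One small point of exposition: in the odd-prime step you locate the square root of $d_\ell$ in $1+\ell\ZZ_\ell$, but for the conclusion $c\equiv 1\pmod N$ you actually need it in $1+\ell^{e_\ell}\ZZ_\ell$, where $\ell^{e_\ell}\Vert N$. This is immediate---$1+\ell^{e_\ell}\ZZ_\ell$ is itself pro-$\ell$ and hence uniquely $2$-divisible, and $d_\ell$ already lies there since the odd parts of $N$ and $N_1$ coincide---but it is worth saying so, since otherwise the claim ``$c\equiv 1\pmod N$'' is not quite justified as written.
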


\begin{lemma} \label{L:how to look for maximal}
Fix an integer $N>1$ with $N\not\equiv 2 \pmod{4}$.   For each prime $\ell$ dividing  $N$, define the integer
\[
N_\ell := \ell^{e_\ell} \prod_{{p|N,\, p^2\equiv 1 \bmod{\ell}}} p,
\]
where $\ell^{e_\ell}$ is the largest power of $\ell$ dividing $N$.  Note that $N_\ell$ is a divisor of $N$.

Let $\calG$ be an open subgroup of $\GL_2(\ZZ_N)$ whose level divides $N$.   Let $G$ be a maximal open subgroup of $\calG$ whose level does not divide $N$.   Then for some prime $\ell |N$, the images of $\calG$ and $G$ modulo $N_\ell \ell$ are distinct subgroups of $\GL_2(\ZZ/N_\ell \ell \ZZ)$.
\end{lemma}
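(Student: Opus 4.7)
My plan is to argue by contradiction: I assume $\pi_{N_\ell\ell}(G) = \pi_{N_\ell\ell}(\calG)$ for every $\ell\mid N$ and derive $G = \calG$. Set $K_N := \ker(\GL_2(\ZZ_N)\to\GL_2(\ZZ/N\ZZ))$ and $G^* := G\cdot K_N\subseteq \calG$. The strict inclusion $G\subsetneq G^*$ follows from the assumption that the level of $G$ does not divide $N$, while $G^*\supseteq K_N$ gives $G^*$ level dividing $N$. The maximality of $G$ then forces $G$ to be a maximal subgroup of $G^*$ in the ordinary sense: any open $G\subsetneq H\subseteq \calG$ has level dividing $N$, hence contains $K_N$ and therefore $G^*$, so equals $G^*$.

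Next I analyze $G^*/G\cong K_N/(K_N\cap G)$. Because $N\not\equiv 2\pmod 4$, each $K_{N,\ell}$ in $K_N = \prod_{\ell\mid N}K_{N,\ell}$ is pro-$\ell$; the incompatibility of pro-$\ell$ and pro-$\ell'$ for $\ell\neq\ell'$ forces any closed subgroup of $K_N$ to decompose as the product of its $\ell$-projections, giving $K_N\cap G = \prod_\ell(K_{N,\ell}\cap G)$. Maximality of $G$ in $G^*$ concentrates the discrepancy at a single prime $p\mid N$: if two distinct primes $p_1, p_2$ contributed, then $G\cdot K_{N,p_1}$ would be a proper open subgroup strictly between $G$ and $G^*$. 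Thus $K_{N,\ell}\subseteq G$ for $\ell\neq p$ and $G^* = G\cdot K_{N,p}$. A Frattini-subgroup argument now yields $\Phi(K_{N,p}) = U_p^{(e_p+1)}\subseteq G$: the condition $N\not\equiv 2\pmod 4$ ensures $e_p\geq 2$ when $p=2$, so $U_p^{(e_p+1)}$ really is the Frattini subgroup of the pro-$p$ group $K_{N,p}$. Both $K_{N,p}\cap G$ and $\Phi(K_{N,p})$ are $G$-invariant, so $G\cdot\Phi(K_{N,p})$ is open in $G^*$ and contains $G$; either $\Phi(K_{N,p})\subseteq G$, or else maximality forces $G\cdot\Phi(K_{N,p}) = G^*$, which translates to $(K_{N,p}\cap G)\cdot\Phi(K_{N,p}) = K_{N,p}$ and contradicts the Frattini property since $K_{N,p}\cap G\subsetneq K_{N,p}$. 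Hence $U_p^{(e_p+1)}\subseteq G$, and $Q := K_{N,p}/(K_{N,p}\cap G)$ is a nontrivial $\FF_p$-quotient of $U_p^{(e_p)}/U_p^{(e_p+1)}$.

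Finally I combine this with the Goursat decomposition $G\subseteq G_p\times G_M$ (where $M := N/p^{e_p}$), with Goursat kernels $B_p\triangleleft G_p$ and $B_M\triangleleft G_M$, and common quotient $Q_G\cong G_p/B_p\cong G_M/B_M$. The $p$-subgroup $Q\subseteq Q_G$ transports under the Goursat isomorphism to an isomorphic $p$-subgroup $Q'\subseteq G_M/B_M$. The crucial and most delicate step is to show $Q'$ injects into $\pi_{M_p^*}(G_M)/\pi_{M_p^*}(B_M)$, where $M_p^* := N_p/p^{e_p} = \prod_{q\in S_p}q$ and $S_p := \{q\mid N : q\neq p,\ q\equiv\pm 1\pmod p\}$. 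This reduces to showing that $\ker(\pi_{M_p^*}|_{G_M})$ admits no nontrivial $p$-power quotient: projecting to each prime $q\mid M$, the image lies in the pro-$q$ group $U_q^{(1)}$ when $q\in S_p$ (no $p$-quotient since $q\neq p$), or in $\GL_2(\ZZ_q)$ when $q\notin S_p$, whose finite quotients factor through $\GL_2(\FF_q)$ of order coprime to $p$, precisely because $q\notin S_p$ means $p\nmid q^2-1$. This is the numerical condition encoded by the primes appearing in $N_p$, and a factor-by-factor Goursat argument transfers the vanishing of $p$-quotients from the projections to the full subgroup. With this injectivity, I conclude by choosing $k\in K_{N,p}\setminus B_p$ (which exists since $Q\neq 1$); then $(k,1)\in K_N\subseteq\calG$ yields $(\pi_{p^{e_p+1}}(k),1)\in\pi_{N_p p}(\calG)$, but any lift $(\tilde x,\tilde y)\in G$ with $\pi_{p^{e_p+1}}(\tilde x) = \pi_{p^{e_p+1}}(k)$ satisfies $\tilde x\in k\cdot U_p^{(e_p+1)}\subseteq k\cdot B_p$, so $\tilde xB_p = kB_p\in Q$, and via Goursat $\tilde yB_M\in Q'\setminus\{1\}$ --- precluding $\pi_{M_p^*}(\tilde y) = 1$. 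This gives $\pi_{N_p p}(G)\subsetneq\pi_{N_p p}(\calG)$, the desired contradiction for $\ell = p$.
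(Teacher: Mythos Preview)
Your overall strategy is sound, but the assertion ``the $p$-subgroup $Q\subseteq Q_G$'' has a gap. Writing $Q=U_p^{(e_p)}/(U_p^{(e_p)}\cap B_p)$, the natural map $u\mapsto uB_p$ lands in $G_p/B_p$ only when $U_p^{(e_p)}\subseteq G_p$, which you have not shown and which can genuinely fail. Concretely, take $N=12$, $\calG=\GL_2(\ZZ_{12})$, and $G=H_2\times\GL_2(\ZZ_3)$ with $H_2:=\det^{-1}\bigl(\pm(1+8\ZZ_2)\bigr)\subseteq\GL_2(\ZZ_2)$, a maximal subgroup of index~$2$ and level exactly~$8$. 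Then $G$ is maximal in $\calG$ of level $8\nmid 12$, the distinguished prime is $p=2$, and $G_p=B_p=H_2\not\supseteq U_2^{(2)}$; hence $Q_G=1$ while $Q\cong\ZZ/2\ZZ$, so no embedding $Q\hookrightarrow Q_G$ exists, and your construction of $Q'\subseteq G_M/B_M$ together with its claimed injection into $\pi_{M_p^*}(G_M)/\pi_{M_p^*}(B_M)$ becomes meaningless.

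The repair is easy and your final paragraph already almost contains it. Either treat the case $G_p\subsetneq\calG_p$ separately (both contain $U_p^{(e_p+1)}$, so they already differ modulo $p^{e_p+1}\mid N_p p$), or simply drop the global embedding: once a lift $(\tilde x,\tilde y)\in G$ with $\tilde x\equiv k\pmod{p^{e_p+1}}$ is assumed, you obtain $k=\tilde x u^{-1}\in G_p$ for some $u\in U_p^{(e_p+1)}$, so $\tilde xB_p=kB_p$ is a nontrivial element of $G_p/B_p$ of order~$p$ (as $k^p\in U_p^{(e_p+1)}\subseteq B_p$). Goursat then gives $\tilde yB_M$ of order~$p$ in $G_M/B_M$, and since $\ker(\pi_{M_p^*}|_{G_M})$ sits inside a profinite group of pro-order prime to $p$, its image in $G_M/B_M$ has no $p$-torsion, forcing $\pi_{M_p^*}(\tilde y)\notin\pi_{M_p^*}(B_M)$. (A small side remark: your phrase ``finite quotients factor through $\GL_2(\FF_q)$'' is not literally correct; what you actually use, and what holds, is that $\GL_2(\ZZ_q)$ has supernatural order prime to $p$ whenever $p\nmid q(q^2-1)$.) For comparison, the paper's own proof is a one-line appeal to \cite[Lemma~7.2]{openimage}; your argument amounts to reproving that lemma in situ.
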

\begin{proof}
Suppose that $G$ is a maximal open subgroup of $\calG$ such that $G$ and $\calG$ have the same image in $\GL_2(\ZZ/N_\ell \ell\ZZ)$ for all primes $\ell|N$.  Take any $\ell|N$.  Since the level of $\calG$ divides $N$, we find that the image of $G$ in $\GL_2(\ZZ/N_\ell \ell \ZZ)$ contains all the matrices that are congruent to $I$ modulo $N_\ell$.  By \cite[Lemma 7.2]{openimage}, we deduce that $G$ has level dividing $N$.
\end{proof}

\section{Agreeable groups} \label{S:agreeable}

\subsection{Agreeable groups} \label{SS:agreeable basics}

Recall that a subgroup $G$ of $\GL_2(\Zhat)$ is \defi{agreeable} if it is open, contains all the scalar matrices in $\GL_2(\Zhat)$, and each prime dividing the level of $G$ also divides the level of $[G,G]$.

Let $G$ be an open subgroup of $\GL_2(\Zhat)$ and let $M$ be the product of the primes that divide the level of $[G,G]\subseteq \SL_2(\Zhat)$.    We define the \defi{agreeable closure} of $G$ to be the group 
\begin{align}\label{E:agreeable closure} 
\calG:= (\ZZ_M^\times  G_M) \times {\prod}_{\ell\nmid M} \GL_2(\ZZ_\ell).
\end{align}
We now give some basic properties of $\calG$.

\begin{lemma} \label{L:agreeable commutators agree}
\begin{romanenum}
\item \label{L:agreeable commutators agree i}
We have $G\subseteq \calG$ and $[G,G]=[\calG,\calG]$.
\item \label{L:agreeable commutators agree ii}
The group $\calG$ is the minimal agreeable subgroup of $\GL_2(\Zhat)$ that contains $G$.  In particular, $G$ is agreeable if and only if $\calG=G$.
\item \label{L:agreeable commutators agree iv}
If $M'$ is the level of $[G,G] \subseteq \GL_2(\Zhat)$, then the level of $\calG$ divides $2 \operatorname{lcm}(M',4)$.
\end{romanenum}
\end{lemma}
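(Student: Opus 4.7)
My plan is to work in the decomposition $\GL_2(\Zhat)=\GL_2(\ZZ_M)\times\prod_{\ell\nmid M}\GL_2(\ZZ_\ell)$, with respect to which $\calG$ is literally a direct product. Before attacking the three parts I would record the key preliminary fact that $2$ always divides $M$: the projection $[G,G]_2=[G_2,G_2]$ lies in $[\GL_2(\ZZ_2),\GL_2(\ZZ_2)]$, and the image of the latter in $\GL_2(\ZZ/2\ZZ)\cong S_3$ is the commutator subgroup $A_3$, a proper subgroup of $\SL_2(\ZZ/2\ZZ)=S_3$; hence $[G,G]_2\neq\SL_2(\ZZ_2)$, so $2$ divides the level of $[G,G]\subseteq\SL_2(\Zhat)$ and therefore divides $M$. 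Consequently every prime $\ell\nmid M$ is at least $3$.

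For part (i), the inclusion $G\subseteq\calG$ is immediate component-by-component. For the commutator identity, $\calG$ being a direct product and $\ZZ_M^\times\cdot I$ being central give $[\calG,\calG]=[G_M,G_M]\times\prod_{\ell\nmid M}[\GL_2(\ZZ_\ell),\GL_2(\ZZ_\ell)]$; since every $\ell\nmid M$ is at least $3$, Lemma~\ref{L:comm}(\ref{L:comm iii}) identifies each $\ell$-factor with $\SL_2(\ZZ_\ell)$, so $[\calG,\calG]=[G,G]_M\times\prod_{\ell\nmid M}\SL_2(\ZZ_\ell)$. It remains to identify $[G,G]$ itself with this product. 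Letting $L$ be the level of $[G,G]\subseteq\SL_2(\Zhat)$, the inclusion $[G,G]\supseteq\ker(\SL_2(\Zhat)\to\SL_2(\ZZ/L\ZZ))$ combined with $\operatorname{rad}(L)=M$ yields $[G,G]\supseteq\{I\}\times\prod_{\ell\nmid M}\SL_2(\ZZ_\ell)$; a short lifting argument using surjectivity of $[G,G]\to[G,G]_M$ then gives $[G,G]=[G,G]_M\times\prod_{\ell\nmid M}\SL_2(\ZZ_\ell)$, as required.

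For part (ii), I would first verify that $\calG$ is itself agreeable: it contains all scalars by construction, and since $[\calG,\calG]=[G,G]$ has level with radical $M$ while every prime dividing the level of $\calG$ lies in $M$, the agreeability condition holds. For minimality, suppose $H$ is agreeable with $G\subseteq H\subseteq\GL_2(\Zhat)$. Then $[H,H]\supseteq[G,G]$, so the radical of the level of $[H,H]$ divides $M$; agreeability of $H$ then forces every prime factor of the level of $H$ to lie in $M$. Hence $H$ contains $\{I\}\times\prod_{\ell\nmid M}\GL_2(\ZZ_\ell)$, and combined with $H_M\supseteq\ZZ_M^\times G_M=\calG_M$ (using $G\subseteq H$ and that $H$ contains the scalars), a lifting identical to the one in (i) shows $\calG\subseteq H$. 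The ``in particular'' clause is then immediate.

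For part (iii), the level of $\calG$ equals the level of $\calG_M=\ZZ_M^\times G_M$. I would set $N:=\operatorname{lcm}(M',4)$, so $4\mid N$ (hence $N\not\equiv 2\pmod 4$) and, using $2\mid M$, $N$ shares its prime factors with $M$, giving $\GL_2(\ZZ_N)=\GL_2(\ZZ_M)$ and $\calG_M=\ZZ_N^\times G_M$. The intersection $G_M\cap\SL_2(\ZZ_N)$ contains $[G,G]_M$, whose level divides $M'$ and hence $N$, so $G_M\cap\SL_2(\ZZ_N)$ itself has level dividing $N$; Lemma~\ref{L:level by adjoining scalars} then applies with $N_1=2N$ to bound the level of $\calG_M$ by $2\operatorname{lcm}(M',4)$. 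I expect the trickiest step overall to be the product decomposition of $[G,G]$ in part (i), which rests on the preliminary observation that $2\mid M$; were $\ell=2$ to be allowed among the primes outside $M$, then $[\GL_2(\ZZ_2),\GL_2(\ZZ_2)]$ would be a proper subgroup of $\SL_2(\ZZ_2)$ and the product factorization of $[\calG,\calG]$ would fail to match that of $[G,G]$.
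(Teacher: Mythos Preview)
Your proof is correct and follows essentially the same approach as the paper's: both establish $2\mid M$ first, compute $[\calG,\calG]$ factor-by-factor using Lemma~\ref{L:comm}(\ref{L:comm iii}), verify agreeability and minimality in the same way, and invoke Lemma~\ref{L:level by adjoining scalars} for the level bound. The only cosmetic difference is in part~(\ref{L:agreeable commutators agree iv}): the paper applies Lemma~\ref{L:level by adjoining scalars} to $H_M$ with $H=G\cap\SL_2(\Zhat)$ and then uses the containment $\calG_M\supseteq\ZZ_M^\times H_M$, whereas you apply it directly to $G_M$; both routes yield the same bound.
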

\begin{proof}
The inclusion $G\subseteq \calG$ is clear since $G_M \subseteq \ZZ_M^\times  G_M$.   The integer $M$ is even since the commutator subgroup of $\GL_2(\Zhat)$, and hence also of $G$, has level divisible by $2$.   Since $M$ is even, we have $[\GL_2(\ZZ_\ell),\GL_2(\ZZ_\ell)]=\SL_2(\ZZ_\ell)$ for all $\ell\nmid M$, cf.~Lemma~\ref{L:comm}(\ref{L:comm iii}).   Therefore, $[\calG,\calG] = [G_M,G_M] \times \prod_{\ell\nmid M} \SL_2(\ZZ_\ell) = [G,G]_M \times \prod_{\ell\nmid M} \SL_2(\ZZ_\ell) = [G,G]$, where the last equality uses that $M$ has the same prime divisors as the level of $[G,G]$.   This proves (\ref{L:agreeable commutators agree i}).

Since $[G,G]=[\calG,\calG]$, the integer $M$ is also the product of the primes dividing the level of $[\calG,\calG]$.  From the definition of the group $\calG$, it contains the scalars of $\GL_2(\Zhat)$ and each prime dividing the level of $\calG$ must divide $M$.  Therefore, $\calG$ is agreeable.

Take any agreeable subgroup $B$ of $\GL_2(\Zhat)$ with $G\subseteq B$.   Let $N$ be the product of the primes that divide the level of $[B,B]$.   We have $[G,G] \subseteq [B,B]$, so $N$ divides $M$.  Since $B$ is agreeable and $N|M$, we have $B=B_M \times \prod_{\ell\nmid M} \GL_2(\ZZ_\ell)$.  We have $\calG_M=\ZZ_M^\times G_M  \subseteq B_M$ since $B$ contains the scalars and $G\subseteq B$.  Therefore, $\calG\subseteq B$.   Part (\ref{L:agreeable commutators agree ii}) now follows.

The level of $H:=G \cap \SL_2(\Zhat)$ divides $M'$ since $H\supseteq [G,G]$.  Note that $M$ and $M'$ have the same prime divisors.  Lemma~\ref{L:level by adjoining scalars} implies that $\ZZ_M^\times  H_M$ is an open subgroup of $\GL_2(\ZZ_M)$ whose level divides $2 \operatorname{lcm}(M',4)$.   Part (\ref{L:agreeable commutators agree iv}) follows since $\calG$ contains $(\ZZ_M^\times  H_M) \times \prod_{\ell\nmid M} \GL_2(\ZZ_\ell)$.
\end{proof}

\begin{remark} \label{R:same agreeable}
In \cite{openimage}, we gave a different definition of an \emph{agreeable} subgroup $G$ that insisted on the extra assumption $\det(G)=\Zhat^\times$.   Consider any open subgroup $G$ of $\GL_2(\Zhat)$ for which $\det(G)=\Zhat^\times$.  In the notation of \cite{openimage}, the group $G$ is agreeable if and only $G$ equals (\ref{E:agreeable closure}), cf.~\S8.3 of \cite{openimage} where the agreeable closure is constructed.   In particular, the notions of \emph{agreeable} in this work and in \cite{openimage} are the same for groups with full determinant.
\end{remark}

\subsection{Maximal agreeable subgroups} \label{SS:maximal agreeable}

Fix an agreeable subgroup $\calG$ of $\GL_2(\Zhat)$. Let $M$ be the product of the primes that divide the level of $[\calG,\calG]$.   In this section, we shall describe the maximal agreeable (proper) subgroups of $\calG$.  We start by giving some obvious maximal agreeable subgroups.

\begin{lemma} \label{L:obvious maximal}
\begin{romanenum}
\item \label{L:obvious maximal i}
Let $B$ be a maximal (proper) open subgroup of $\calG_M$ satisfying $B\supseteq \ZZ_M^\times  I$.  Then $G:=B\times\prod_{\ell\nmid M} \GL_2(\ZZ_\ell)$ is a maximal agreeable subgroup of $\calG$.
\item \label{L:obvious maximal ii}
For a prime $p\nmid M$, let $B$ be a maximal (proper) open subgroup of $\GL_2(\ZZ_p)$ that satisfies $B\supseteq \ZZ_p^\times I$ and $B\not\supseteq \SL_2(\ZZ_p)$.  Then $G:=\calG_M\times B \times \prod_{\ell \nmid pM} \GL_2(\ZZ_\ell)$ is a maximal agreeable subgroup of $\calG$.
\item \label{L:obvious maximal iii}
If $3\nmid M$, then $G:=\calG_M \times (\ZZ_3^\times  \SL_2(\ZZ_3)) \times \prod_{\ell\nmid 3M} \GL_2(\ZZ_\ell)$ is a maximal agreeable subgroup of $\calG$.
\end{romanenum}
\end{lemma}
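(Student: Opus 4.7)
The plan is, in each of the three parts, to verify that $G$ is agreeable and that any agreeable $H$ with $G\subseteq H\subseteq \calG$ equals $G$ or $\calG$. In each case, openness of $G$, its containment of all scalars of $\GL_2(\Zhat)$, and the strict inclusion $G\subsetneq \calG$ are immediate from the product description. Using that $\calG$ is agreeable and that $2\mid M$ (since $[\GL_2(\FF_2),\GL_2(\FF_2)]\neq \SL_2(\FF_2)$), one gets $\calG=\calG_M\times \prod_{\ell\nmid M}\GL_2(\ZZ_\ell)$ and, by Lemma~\ref{L:comm}(\ref{L:comm iii}), $[\calG,\calG]=[\calG_M,\calG_M]\times \prod_{\ell\nmid M}\SL_2(\ZZ_\ell)$, so the prime divisors of the level of $[\calG_M,\calG_M]\subseteq \SL_2(\ZZ_M)$ are exactly the primes of $M$.

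For maximality, in each case $G$ contains the full group $\GL_2(\ZZ_\ell)$ at every prime $\ell$ outside a small ``active'' set of primes, so any $H$ with $G\subseteq H\subseteq \calG$ inherits these full factors and decomposes accordingly. The problem then reduces to the maximality of the nontrivial active factor of $G$: the given $B$ in parts (i) and (ii), and the index-two subgroup $\ZZ_3^\times\SL_2(\ZZ_3)$ of $\GL_2(\ZZ_3)$ in part (iii). In each case this forces $H\in\{G,\calG\}$.

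For agreeability I would compute $[G,G]$ factor by factor. In part (i), $[G,G]=[B,B]\times \prod_{\ell\nmid M}\SL_2(\ZZ_\ell)$, so the level of $[G,G]$ equals that of $[B,B]$; the inclusion $[B,B]\subseteq [\calG_M,\calG_M]$ forces the level of $[\calG_M,\calG_M]$ to divide the level of $[B,B]$, so every prime of $M$ divides the level of $[B,B]$, and since the primes dividing the level of $B$ (and hence of $G$) all lie in the primes of $M$, agreeability follows. In part (ii), the hypothesis $B\not\supseteq \SL_2(\ZZ_p)$ yields $[B,B]\subseteq B\cap \SL_2(\ZZ_p)\subsetneq \SL_2(\ZZ_p)$, so $p$ divides the level of $[B,B]$ whenever it divides the level of $B$; combined with the analysis of the $\calG_M$ factor, this gives agreeability. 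In part (iii), Lemma~\ref{L:comm}(\ref{L:comm iv}) shows $[\ZZ_3^\times\SL_2(\ZZ_3),\ZZ_3^\times\SL_2(\ZZ_3)]=[\SL_2(\ZZ_3),\SL_2(\ZZ_3)]$ has level $3$, matching the level $3$ of the factor $\ZZ_3^\times\SL_2(\ZZ_3)$ of $G$.

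No step in the argument is deep; the main point is to organize the three cases uniformly and, in part (i), to correctly exploit the containment $[B,B]\subseteq [\calG_M,\calG_M]$ to force the prime divisors of $M$ into the level of $[B,B]$.
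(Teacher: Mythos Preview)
Your proposal is correct and follows essentially the same approach as the paper: verify openness, scalars, and the prime-divisor condition on the level of $[G,G]$ factor by factor (using $2\mid M$ and Lemma~\ref{L:comm}), then check maximality via the product decomposition. One small remark: your decomposition argument for maximality nowhere uses that the intermediate group $H$ is agreeable, so you have in fact shown (as the paper states) that $G$ is a maximal subgroup of $\calG$, not merely a maximal agreeable one; you may as well claim the stronger conclusion.
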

\begin{proof}
Since $\calG$ is agreeable, we have $\calG=\calG_M \times \prod_{\ell\nmid M}\GL_2(\ZZ_\ell)$.  In all the cases, the group $G$ is open, contains $\Zhat^\times I$ and satisfies $G\subseteq \calG$.   Let $N$ be the product of the primes dividing the level of $[G,G] \subseteq \SL_2(\Zhat)$.   We have $M|N$ since $G\subseteq \calG$.   We have $N=M$, $N=pM$ and $N=3M$ in parts (\ref{L:obvious maximal i}), (\ref{L:obvious maximal ii}) and (\ref{L:obvious maximal iii}), respectively; in part (\ref{L:obvious maximal iii}), we use Lemma~\ref{L:comm}(\ref{L:comm iv}).   In all the cases, every prime dividing the level of $G$ also divides $N$.  Thus $G$ is agreeable.     In all the cases, one readily sees that $G$ is a maximal subgroup of $\calG$.
\end{proof}

After setting some more notations, we will describe the maximal agreeable subgroups of $\calG$ that are not covered by Lemma~\ref{L:obvious maximal}.

Fix a prime $p\in \{3,5\}$.   Let $\mathfrak{S}_p$ and $\mathfrak{A}_p$ be the symmetric and alternating groups, respectively, on $p$ letters.  By Lemma~\ref{L:Well properties}, there is a unique closed normal subgroup $W_p$ of $\SL_2(\ZZ_p)$ for which $\SL_2(\ZZ_p)/W_p$ is a finite simple group.   The group $\SL_2(\ZZ_p)/W_p$ is isomorphic to $\mathfrak{A}_p$ (recall the exceptional isomorphism $\PSL_2(\FF_5)/\{\pm I\}\cong \mathfrak{A}_5$).  The group $W_p$ is also normal in $\GL_2(\ZZ_p)$.  Let
\[
\psi_p\colon \GL_2(\ZZ_p) \to \GL_2(\ZZ_p)/(\ZZ_p^\times W_p) \cong \mathfrak{S}_p
\]
be the homomorphism obtained by composing the quotient map with a choice of isomorphism (the existence of this isomorphism is a direct computation and requires $p\in \{3,5\}$).  The group $\psi_p(\SL_2(\ZZ_p))$ is equal to the alternating group $\mathfrak{A}_p$.   Using the uniqueness of $W_p$, we find that a closed subgroup $B$ of $\GL_2(\ZZ_p)$ satisfies $B\supseteq \SL_2(\ZZ_p)$ if and only if $\psi_p(B)\supseteq \mathfrak{A}_p$.

\begin{lemma} \label{L:maximal leftovers}
Let $G$ be a maximal agreeable subgroup of $\calG$ that is not one of the groups described in Lemma~\ref{L:obvious maximal}.
\begin{romanenum}
\item \label{L:maximal leftovers i}
There is a unique prime $p\in \{3,5\}$ such that $p\nmid M$ and $p$ divides the level of $[G,G]$.   We have $G=G_{Mp} \times \prod_{\ell \nmid Mp} \GL_2(\ZZ_\ell)$.
\item \label{L:maximal leftovers ii}
If $p=3$, then $G_p=\GL_2(\ZZ_p)$.   If $p=5$, then $G_p=\ZZ_p^\times \SL_2(\ZZ_p)$.

\item \label{L:maximal leftovers iii}
There is a homomorphism $\varphi\colon \calG_M \to \mathfrak{S}_p$ such that $\varphi(\calG_M)=\psi_p(G_p)$ and 
\[
G_{Mp}=\{(g_1,g_2)\in \calG_M\times G_p : \varphi(g_1)=\psi_p(g_2) \}.
\]
\end{romanenum}
\end{lemma}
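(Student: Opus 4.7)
The plan is to exploit the maximality of $G$ through careful slot-wise analysis. Set $N$ equal to the product of primes dividing the level of $[G,G]$; agreeability of $G$ forces $G_\ell = \GL_2(\ZZ_\ell)$ for each $\ell \nmid N$, so $G = G_N \times \prod_{\ell \nmid N}\GL_2(\ZZ_\ell)$ and similarly $\calG = \calG_M \times \prod_{\ell \nmid M}\GL_2(\ZZ_\ell)$.  Writing $m := N/M$ (coprime to $M$, and consisting of odd primes since $2\mid M$), the problem reduces to analyzing $G_N \subsetneq \calG_N$.

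For each $\ell \mid m$, let $T_\ell \subseteq \GL_2(\Zhat)$ denote the copy of $\GL_2(\ZZ_\ell)$ sitting in the $\ell$-th slot with identity elsewhere, and set $H_\ell := G \cdot T_\ell$.  Normality of $T_\ell$ in $\calG$ makes $H_\ell$ a subgroup, and a direct computation yields $H_\ell = G_{\hat\ell} \times \GL_2(\ZZ_\ell)$ (where $G_{\hat\ell}$ denotes the projection of $G$ away from the $\ell$-slot) with commutator $[G_{\hat\ell},G_{\hat\ell}]\times \SL_2(\ZZ_\ell)$ by Lemma~\ref{L:comm}(\ref{L:comm iii}).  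Checking level support prime-by-prime shows $H_\ell$ is agreeable and sits between $G$ and $\calG$.  Maximality of $G$ therefore forces, for each $\ell \mid m$, either $T_\ell \subseteq G$ (so the $\ell$-slot decouples as a direct factor) or $G_{\hat\ell} = \calG_{\hat\ell}$.  If the first alternative held for every $\ell \mid m$, then $G = G_M \times \prod_{\ell \nmid M}\GL_2(\ZZ_\ell)$ with $G_M$ maximal in $\calG_M$ and containing $\ZZ_M^\times I$, putting $G$ in Lemma~\ref{L:obvious maximal}(\ref{L:obvious maximal i}), contrary to hypothesis.  Hence some $p \in m$ satisfies $G_{\hat p} = \calG_{\hat p}$, giving $G_M = \calG_M$ and $G_p = \GL_2(\ZZ_p)$.

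The main obstacle is proving uniqueness of $p$ and showing $p \in \{3,5\}$.  For uniqueness: if some other $q \in m$ also has $G_{\hat q} = \calG_{\hat q}$, applying Goursat's lemma (Lemma~\ref{L:Goursat}) to $G_{Mpq} \subseteq \calG_{Mp} \times \GL_2(\ZZ_q)$ yields a nontrivial common finite quotient $A_q$.  Using Lemma~\ref{L:comm}(\ref{L:comm ii}) together with the fact that the only nonabelian simple quotient of $\GL_2(\ZZ_\ell)$ for $\ell \geq 5$ is $\PSL_2(\FF_\ell)$, and these do not match across distinct primes, $A_q$ must be solvable; taking a proper nontrivial normal subgroup of $A_q$ then produces an intermediate agreeable subgroup refining the $q$-Goursat linkage while preserving the $p$-linkage, contradicting maximality (the simple-$A_q$ case is handled symmetrically via $A_p$).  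For $p \in \{3,5\}$: the Goursat common quotient between $\calG_M$ and $G_p$ must embed ``maximally'' into $\GL_2(\ZZ_p)/(\ZZ_p^\times W_p)$, which by Lemma~\ref{L:Well properties} is isomorphic to $\mathfrak{S}_p$ only for $p \in \{3,5\}$.  For $p \geq 7$, perfectness of $\SL_2(\ZZ_p)$ (Lemma~\ref{L:comm}(\ref{L:comm ii})) forces any solvable common quotient through the abelianization $\ZZ_p^\times$, and maximality then places $G$ in Lemma~\ref{L:obvious maximal}(\ref{L:obvious maximal ii}), a contradiction.

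With $m=\{p\}$ and $p\in\{3,5\}$ established, we have $G = G_{Mp}\times\prod_{\ell\nmid Mp}\GL_2(\ZZ_\ell)$, proving (i).  Since $G_{Mp}\subseteq \calG_M\times G_p$ surjects onto both factors, Goursat's lemma expresses it as the fiber product over a common finite quotient $A$.  Setting $\varphi\colon \calG_M \to \mathfrak{S}_p$ to be the composition of the Goursat surjection $\calG_M\twoheadrightarrow A$ with the inclusion $A\hookrightarrow \mathfrak{S}_p$ gives $\varphi(\calG_M) = A = \psi_p(G_p)$ and the stated description of $G_{Mp}$, establishing (iii).  For (ii): $\psi_p(G_p)\supseteq \mathfrak{A}_p$ (and hence $G_p \supseteq \ZZ_p^\times \SL_2(\ZZ_p) = \psi_p^{-1}(\mathfrak{A}_p)$) must hold, since otherwise a direct check exhibits a larger agreeable subgroup of Lemma~\ref{L:obvious maximal}(\ref{L:obvious maximal ii}) type strictly containing $G$.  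Finally, for $p = 3$, $G_3 = \ZZ_3^\times \SL_2(\ZZ_3)$ would directly place $G$ in Lemma~\ref{L:obvious maximal}(\ref{L:obvious maximal iii}), so $G_3 = \GL_2(\ZZ_3)$.
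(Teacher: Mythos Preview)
Your overall strategy (Goursat, maximality, $p$-adic analysis) matches the paper's, but there is a genuine gap at the key step. You assert that the Goursat common quotient $A = G_p/B_2$ embeds into $\mathfrak{S}_p$, so that $G_{Mp}$ is already the $\mathfrak{S}_p$-fiber product; this is not justified. From $\ZZ_p^\times I \subseteq B_2$ and (in the case $B_2 \not\supseteq \SL_2(\ZZ_p)$, which you never separately rule out) $\psi_p(B_2)=1$, one obtains only $\ZZ_p^\times I \subseteq B_2 \subseteq \ZZ_p^\times W_p$, hence a \emph{surjection} $A \twoheadrightarrow \psi_p(G_p)$, not an injection. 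The paper does not try to identify $A$. It uses this surjection to define $\varphi\colon\calG_M\to\mathfrak{S}_p$, forms the a~priori larger fiber product $C:=\{(g_1,g_2):\varphi(g_1)=\psi_p(g_2)\}\supseteq G_{Mp}$, and then verifies that $G':=C\times\prod_{\ell\nmid Mp}\GL_2(\ZZ_\ell)$ is itself agreeable with $G\subseteq G'\subsetneq\calG$; maximality then forces $G=G'$. This single step yields (iii), the product decomposition in (i), and uniqueness of $p$ simultaneously. You also omit the case $B_2\supseteq\SL_2(\ZZ_p)$ entirely; the paper rules it out by showing $[G_{Mp},G_{Mp}]=[G_M,G_M]\times\SL_2(\ZZ_p)$, so that $p$ would not divide the level of $[G,G]$, with a further Goursat argument to dispose of $m=15$.

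The claim ``$G_p=\GL_2(\ZZ_p)$'' in your second paragraph does not follow from $G_{\hat p}=\calG_{\hat p}$ (which says nothing about the $p$-slot) and is false in general for $p=5$: part~(ii) of the lemma allows $G_5=\ZZ_5^\times\SL_2(\ZZ_5)$. The paper's setup is also cleaner than your $H_\ell$ dichotomy (whose agreeability check is more delicate than you indicate when $m$ has several prime factors): if $G_M\subsetneq\calG_M$, or $G_\ell\not\supseteq\SL_2(\ZZ_\ell)$ for some $\ell\nmid M$, or $3\nmid M$ and $G_3\subsetneq\GL_2(\ZZ_3)$, then $G$ lies inside an obvious maximal agreeable subgroup of the corresponding type in Lemma~\ref{L:obvious maximal}, forcing equality and a contradiction. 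With $G_\ell\supseteq\SL_2(\ZZ_\ell)$ for all $\ell\nmid M$ in hand, Lemma~\ref{L:old agreeable} immediately gives $m\mid 15$, and your separate (and sketchy) uniqueness and $p\in\{3,5\}$ arguments become unnecessary.
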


\begin{proof}
Since $G$ is a maximal agreeable subgroup of $\calG$, our assumption that $G$ is not one of the groups from Lemma~\ref{L:obvious maximal} implies that the following hold:
\begin{itemize}
\item 
$G_M=\calG_M$,
\item
$G_\ell \supseteq \SL_2(\ZZ_\ell)$ for all primes $\ell \nmid M$,
\item
if $3\nmid M$, then $G_3=\GL_2(\ZZ_3)$.
\end{itemize}
Since $M$ is even and $G\supseteq \Zhat^\times I$, we have $\ZZ_\ell^\times \SL_2(\ZZ_\ell)\subseteq G_\ell \subseteq \GL_2(\ZZ_\ell)$ for all $\ell\nmid M$.  In particular, (\ref{L:maximal leftovers ii}) will follow once we prove (\ref{L:maximal leftovers i}).

Since $G_\ell \supseteq \SL_2(\ZZ_\ell)$ for all $\ell\nmid M$, Lemma~\ref{L:old agreeable} implies that the level of $[G,G]$ is not divisible by any prime $\ell \nmid M$ with $\ell>5$.   Since $M$ is even and $[G,G]\subseteq [\calG,\calG]$, we deduce that the product of the primes dividing the level of $[G,G]$ is $Mm$ for a unique $m | 15$.  We have $m>1$ since $G$ is a proper subgroup of $\calG$ and $G_M=\calG_M$.   Let $p\in \{3,5\}$ be the largest prime dividing $m$.   

We can view $G_{Mp}$ as a subgroup of $G_M \times G_p$.  The projection homomorphisms $\varphi_1 \colon G_{Mp} \to G_M$ and $\varphi_2\colon G_{Mp}\to G_p$ are surjective.  Let $B_1$ and $B_2$ be the normal subgroups of $G_M$ and $G_p$, respectively, for which $\ker(\varphi_2)=B_1\times \{I\}$ and $\ker(\varphi_1)=\{I\}\times B_2$.  Note that $G_{Mp}$ contains $B_1\times B_2$.   We have $\ZZ_M^\times I \subseteq B_1$ and $\ZZ_p^\times I \subseteq B_2$ since $G$ contains all the scalars of $\GL_2(\Zhat)$.  By Goursat's lemma (Lemma~\ref{L:Goursat}), the image of $G_{Mp}$ in $(G_M\times G_p)/(B_1\times B_2)= G_M/B_1 \times G_p/B_2$ is the graph of a group isomorphism $f\colon G_M/B_1\xrightarrow{\sim} G_p/B_2$.

\noindent $\bullet$ First consider the case where $B_2\not \supseteq \SL_2(\ZZ_p)$ and hence $\psi_p(B_2)\not\supseteq \mathfrak{A}_p$.   In particular, $\psi_p(B_2)$ is a normal subgroup of $\psi_p(G_p) \in \{\mathfrak{A}_p,\mathfrak{S}_p\}$ that does not contain $\mathfrak{A}_p$.  Therefore, $\psi_p(B_2)=1$; equivalently, $B_2\subseteq \ZZ_p^\times W_p$.  Define the homomorphism 
\[
\varphi\colon G_M\to G_M/B_1 \xrightarrow{f} G_p/B_2 \to G_p/(\ZZ_p^\times W_p) \hookrightarrow \GL_2(\ZZ_p)/(\ZZ_p^\times W_p) \xrightarrow{\sim} \mathfrak{S}_p,
\]
where the last isomorphism with $\mathfrak{S}_p$ is the same as that in our definition of $\psi_p$.  We have $\varphi(G_M)=\psi_p(G_p) \supseteq \mathfrak{A}_p$ and inclusions
\begin{align*}
G_{Mp}&=\{ (g_1,g_2) \in G_M \times G_p : f(g_1 B_1)=g_2 B_2 \}\\
&\subseteq \{ (g_1,g_2)\in G_M \times G_p : \varphi(g_1)=\psi_p(g_2) \}=:C.
\end{align*}
Define the open subgroup $G':=C\times \prod_{\ell\nmid Mp} \GL_2(\ZZ_\ell)$ of $\GL_2(\Zhat)$.

We claim that $G'$ is agreeable and satisfies $G\subseteq G' \subsetneq \calG$.   We certainly have $G\subseteq G'$ since $G_{Mp}\subseteq C$.  We have $G'\subsetneq \calG$ since $C \subsetneq G_M \times G_p \subseteq \calG_M \times \GL_2(\ZZ_\p) = \calG_{Mp}$, where the strict inclusion uses that $\psi_p$ is non-trivial on $G_p$ and the last equality uses that $p$ does not divide the level of $\calG$.  The inclusion $G'\subseteq \calG$ implies that $[G',G']\subseteq [\calG,\calG]$ and hence the level of $[G',G']$ is divisible by every prime dividing $M$.   Since $M$ is even, we have $[G',G']=[C,C] \times \prod_{\ell\nmid Mp} \SL_2(\ZZ_\ell)$ by Lemma~\ref{L:comm}(\ref{L:comm iii}).    So the level of $[G',G']$ is not divisible by any primes $\ell\nmid Mp$.    Using that $\psi_p(G_p\cap \SL_2(\ZZ_p))=\psi_p(\SL_2(\ZZ_p))=\mathfrak{A}_p$, one finds that the level of $C \cap \SL_2(\ZZ_{Mp})$, and hence also of $[C,C]$, is divisible by $p$.  Combining everything together, we deduce that the product of primes that divide the level of $[G',G']$ is $Mp$.   Observe that the level of $G'$ is divisible only by primes dividing $Mp$.   Since $G$ contains the scalars of $\GL_2(\Zhat)$, we have $\ZZ_{Mp}^\times = \ZZ_M^\times \times \ZZ_p^\times \subseteq B_1\times B_2 \subseteq G_{Mp} \subseteq C$.  Therefore, $G'$ contains all the scalars of $\GL_2(\Zhat)$.  We have now verified that $G'$ is agreeable.

Since $G$ is a maximal agreeable subgroup of $\calG$, the previous claim implies that $G'=G$.  Part (\ref{L:maximal leftovers iii}) with our prime $p$ holds in this case from our definition of $G'$.

\noindent $\bullet$  Now consider the case where $B_2\supseteq \SL_2(\ZZ_p)$.  We will prove that this case cannot occur.

We claim that $[G_{Mp},G_{Mp}]=[G_M,G_M] \times \SL_2(\ZZ_p)$.  It suffices to show that $[G_{Mp},G_{Mp}]\supseteq \{I\}\times \SL_2(\ZZ_p)$.   Take any $g_1,g_2 \in G_p$ with $\det(g_1)=\det(g_2)$.  Since $B_2\supseteq \SL_2(\ZZ_p)$,  $g_1$ and $g_2$ lie in the same coset of $G_p/B_2$.  So there is an $a\in G_M$ such that $(a,g_1)$ and $(a,g_2)$ both lies in $G_{Mp}$.   Taking the commutator of these elements, we find that $(I, g_1 g_2 g_1^{-1} g_2^{-1})$ lies in $[G_{Mp},G_{Mp}]$.    Therefore, $[G_{Mp},G_{Mp}]\supseteq\{I\}\times C$, where $C\subseteq \SL_2(\ZZ_p)$ is the closed group generated by the set $\{ g_1g_2 g_1^{-1} g_2^{-1} : g_1,g_2 \in G_p, \det(g_1)=\det(g_2)\}$.  It thus suffices to show that $C=\SL_2(\ZZ_p)$.   When $p=5$, we have $C=\SL_2(\ZZ_p)$ by Lemma~\ref{L:comm}(\ref{L:comm ii}).  So assume that $p=3$.   Since $C$ contains the commutator subgroup of $\SL_2(\ZZ_3)$, the group $C$ has level $1$ or $3$ by Lemma~\ref{L:comm}(\ref{L:comm iv}).  A simple computation shows that the image of $C$ modulo $3$ is $\SL_2(\ZZ/3\ZZ)$ and hence $C=\SL_2(\ZZ_3)$.  This completes the proof of the claim.

Suppose that $m=p$.  The product of the primes dividing the level of $[G,G]$ is $Mm=Mp$.  By the above claim, we deduce that $[G,G]=[G_M,G_M] \times \SL_2(\ZZ_p) \times \prod_{\ell\nmid Mp} \SL_2(\ZZ_\ell)$ which contradicts that $p$ divides the level of $[G,G]$.

Therefore, $m=15$ and $p=5$.  We can view $[G_{15M},G_{15M}]$ as a subgroup of $[G_{5M},G_{5M}] \times [G_3,G_3]$ whose projection to each factor is surjective.  By Goursat's lemma (Lemma~\ref{L:Goursat}), there are normal subgroups $B_1'$ and $B_2'$ of $[G_{5M},G_{5M}]$ and $[G_3,G_3]$, respectively, so that the image of $[G_{15M},G_{15m}]$ in $[G_{5M},G_{5M}]/B_1' \times [G_{3},G_{3}]/B_2'$ is the graph of an isomorphism.  The group $G_3=\GL_2(\ZZ_3)$ is prosolvable (since $\GL_2(\ZZ/3\ZZ)$ is solvable) and $\SL_2(\ZZ_5)$ is equal to its own commutator subgroup by Lemma~\ref{L:comm}(\ref{L:comm ii}), so we must have $\{I\}\times \SL_2(\ZZ_5) \subseteq B_1'$.   From this we deduce that the level of $[G_{15M},G_{15M}]\subseteq \SL_2(\ZZ_{15M})$ is not divisible by $5$ which contradicts that $m=15$.  We conclude that the case $B_2\supseteq \SL_2(\ZZ_p)$ does not occur.

\noindent $\bullet$ 
We have now proved that parts (\ref{L:maximal leftovers i}) and (\ref{L:maximal leftovers iii}) hold with our prime $p$.  When $p=3$, we have $G_p=\GL_2(\ZZ_p)$.   Now assume that $p=5$ and $G_5=\GL_2(\ZZ_5)$.  From (\ref{L:maximal leftovers iii}), we find that $\mathfrak{S}_5=\psi_5(G_5)$ is isomorphic to a quotient of $\calG_M$.

We claim that there is an open normal subgroup $W$ of $\calG_M$ for which $\calG_M/W$ is isomorphic to a direct product of copies of the simple group $\mathfrak{A}_5$ and the finite quotients of $W$ have no simple groups isomorphic to $\mathfrak{A}_5$ occurring as composition factors.  This follows from the description of maximal subgroups of $\GL_2(\FF_\ell)$ for $\ell|M$, see \cite[\S2]{Serre-Inv72}, which shows that if a subgroup of $\GL_2(\FF_\ell)$ has $\mathfrak{A}_5$ as a composition factor, then its image in $\operatorname{PGL}_2(\FF_\ell)$ is isomorphic to $\mathfrak{A}_5$ (note that $\mathfrak{A}_5\not\cong \SL_2(\FF_\ell)/\{\pm \ell\}$ since $\ell\neq 5$).

So $W$ must lie in the kernel of our surjective homomorphism $\calG_M\to \mathfrak{S}_5$ which is impossible since $\calG_M/W$ is isomorphic to a direct product of copies of $\mathfrak{A}_5$.   Therefore, $G_5\neq \GL_2(\ZZ_5)$.  Since $G_5\supseteq \ZZ_p^\times \SL_2(\ZZ_p)$, part (\ref{L:maximal leftovers ii}) follows.
\end{proof}

\subsection{Special subgroups} \label{SS:special subgroups}

Let $\calG$ be an open subgroup of $\GL_2(\Zhat)$ that contains the scalars $\Zhat^\times  I$.   Fix an open group $W$ of $\SL_2(\Zhat)$ that satisfies 
\[
[\calG,\calG] \subseteq W \subseteq \calG \cap\SL_2(\Zhat).
\]
The group $W$ is normal in $\calG$ and $\calG/W$ is abelian since $[\calG,\calG]\subseteq W \subseteq \calG$.  

Given an open subgroup $U$ of $\det(\calG)$, the following theorem gives a criterion that determines whether there exists an open subgroup $G$ of $\calG$ for which $G\cap \SL_2(\Zhat)=W$ and $\det(G)=U$.   Let $N$ be the least common multiple of the levels of $\calG$ and $W$ in $\GL_2(\Zhat)$ and $\SL_2(\Zhat)$, respectively.   Define $N_1:=N$ if $N$ is odd and $N_1:=\operatorname{lcm}(N,8)$ if $N$ is even. 

\begin{thm} \label{T:G existence}
Let $U$ be an open subgroup of $\det(\calG) \subseteq \Zhat^\times$ and let $S:=U_N[2^\infty]$ be the $2$-power torsion subgroup of $U_N\subseteq \ZZ_N^\times$.    Then the following are equivalent:
\begin{alphenum}
\item \label{T:G existence a}
There is an open subgroup $G\subseteq \calG$ with $G\cap \SL_2(\Zhat)=W$ and $\det(G)=U$.
\item \label{T:G existence b}
There is a homomorphism $\beta\colon S \to \calG_N/W_N$ such that $\det(\beta(a))=a$ for all $a\in S$.
\item \label{T:G existence c}
There is a homomorphism $\beta\colon S\to \calG(N_1)/W(N_1)$ such that $\det(\beta(a))\equiv a \pmod{N_1}$ for all $a\in S$.
\end{alphenum}
Moreover, if a group $G$ as in (\ref{T:G existence a}) exists, then there is such a group whose level divides a power of $2$ times the least common multiple of $N$ and the level of $U \subseteq \Zhat^\times$. 
\end{thm}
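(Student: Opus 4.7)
The plan is to reformulate condition (a) as a splitting problem for a profinite abelian extension, then decompose by Sylow and handle the odd part using scalars and the 2-part using the finite data $\beta$.

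Since $[\calG,\calG]\subseteq W$, the quotient $\calG/W$ is a profinite abelian group, and the determinant induces a homomorphism $\calG/W\to\det(\calG)$ with finite kernel $F=(\calG\cap\SL_2(\Zhat))/W$. A subgroup $G\subseteq\calG$ with $G\cap\SL_2(\Zhat)=W$ and $\det(G)=U$ is the same data as a continuous group-theoretic section $\sigma\colon U\to\calG/W$ of $\det$ (recovering $G=\{g\in\calG:\det g\in U,\ gW=\sigma(\det g)\}$), so (a) is equivalent to the splitting of the extension $1\to F\to \det^{-1}(U)/W\to U\to 1$. Decomposing $F$ and $U$ into their 2-Sylow and pro-odd-Sylow summands, the extension splits as a product of a 2-primary extension and a pro-odd extension. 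I would dispose of the odd part using the scalars $\Zhat^\times I\subseteq\calG$: since squaring is a topological automorphism of the pro-odd abelian group $U^{(2')}$, each $u\in U^{(2')}$ has a unique square root $a_u\in U^{(2')}$, and $u\mapsto a_u I\cdot W$ is a continuous section with $\det(a_u I)=u$, landing in $E^{(2')}$.

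This reduces (a) to splitting the 2-Sylow extension $1\to F^{(2)}\to E^{(2)}\to U^{(2)}\to 1$, which I would identify with the data in (b). Reduction mod $N$ gives a surjection $\calG/W\twoheadrightarrow\calG_N/W_N$ with kernel $K_N(\GL_2)/K_N(\SL_2)$, which maps isomorphically onto $1+N\Zhat$ via $\det$; this provides a \emph{canonical} lift of the kernel of the reduction $U^{(2)}\twoheadrightarrow U_N^{(2)}=S$ into $E^{(2)}$. Given a section $\sigma_2$, composition with reduction yields $\beta$ as in (b); conversely, given $\beta$, one glues it with the canonical kernel lift to produce $\sigma_2$ on all of $U^{(2)}$, with the compatibility of the two pieces on their overlap being exactly the determinant condition on $\beta$. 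This establishes (a)$\Leftrightarrow$(b). The equivalence (b)$\Leftrightarrow$(c) is then a matter of moving between levels $N$ and $N_1$: both kernels of $U(N_1)\twoheadrightarrow U_N$ and $\calG(N_1)/W(N_1)\twoheadrightarrow\calG_N/W_N$ are 2-groups (since $N_1/N$ is a power of 2), so homomorphisms out of $S$ can be lifted/descended compatibly.

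The main obstacle will be the 2-primary bookkeeping in the gluing step: one must verify that the cocycle obtained by pasting $\beta$ with the canonical kernel lift is actually a coboundary, i.e., that there is no genuine $H^2$-obstruction beyond what $\det\beta=\mathrm{id}$ already accounts for. This is the reason for the extra 2-adic precision built into $N_1=\operatorname{lcm}(N,8)$: once the 2-part of the modulus is at least $8$, squaring on $1+N_1\Zhat_2$ is surjective onto $1+2N_1\Zhat_2$, which provides enough room to absorb the finite obstruction in $F^{(2)}$ and promote the cocycle to a homomorphism. For the ``moreover'' clause, I would read off the level bound directly from the construction: the pro-odd section built from scalars is controlled only by the level of $U$, while the 2-primary section built from $\beta$ lives at level dividing $N_1$ in $\calG/W$; combining them yields a group $G$ whose level divides $\operatorname{lcm}(N,\text{level of }U)$ up to a 2-power factor, as required.
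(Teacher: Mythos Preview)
Your overall framing---(a) as a splitting problem for the abelian extension $1\to F\to \det^{-1}(U)/W\to U\to 1$, with the odd part handled by scalar square roots---matches the paper's strategy. But there are two genuine gaps.

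First, your identification $U_N^{(2)}=S$ is false whenever $2\mid N$. The group $\ZZ_2^\times\cong\{\pm1\}\times(1+4\ZZ_2)$ has a torsion-free pro-$2$ factor, so the $2$-Sylow of $U_N$ decomposes as $S\times A_1$ with $A_1$ a torsion-free $\ZZ_2$-module of rank at most one. Your section $\beta$ is defined only on the torsion piece $S$; it says nothing about $A_1$. The paper fills this gap by picking a topological generator $u_1$ of $A_1$, choosing any $g_1\in\calG_N$ with $\det g_1=u_1$, and extending to a homomorphism $A_1\to\calG_N/W_N$ by continuity---this works precisely because $A_1\cong\ZZ_2$ is free, so there is no order obstruction. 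Without this extra piece your gluing does not cover all of $U^{(2)}$, and the ``$H^2$ obstruction'' you worry about is a red herring: once the three pieces $S$, $A_1$, $A_2$ are handled separately, abelianness makes the product a homomorphism automatically.

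Second, you have the direction of (b)$\Leftrightarrow$(c) backwards and miss the actual content. In the paper's notation $\calG_N/W_N$ is the \emph{profinite} $N$-adic quotient while $\calG(N_1)/W(N_1)$ is the \emph{finite} mod-$N_1$ quotient; the natural map goes $\calG_N/W_N\twoheadrightarrow\calG(N_1)/W(N_1)$, so (b)$\Rightarrow$(c) is trivial and (c)$\Rightarrow$(b) is the substantive direction. The difficulty is that a lift $g_1\in\calG_N$ of $\beta(a)\in\calG(N_1)/W(N_1)$ only satisfies $\det g_1\equiv a\pmod{N_1}$, not $\det g_1=a$, and need not have the correct order in $\calG_N/W_N$. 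The paper fixes both at once by multiplying by a scalar $c\in 1+\tfrac{N_1}{2}\ZZ_N$ with $c^2=a\cdot\det(g_1)^{-1}$; the condition $8\mid N_1$ (when $N$ is even) is exactly what guarantees $(1+\tfrac{N_1}{2}\ZZ_N)^2=1+N_1\ZZ_N$ so that such a $c$ exists. This is where the choice of $N_1$ enters, not in the (a)$\Leftrightarrow$(b) gluing as you suggest.
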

\begin{proof}
Define $U'=U_N\times \prod_{\ell\nmid N} \ZZ_\ell^\times$.  We have $U\subseteq U' \subseteq \det(\calG)$.  Note that the conditions (\ref{T:G existence b}) and (\ref{T:G existence c}) depend only on $U_N=U'_N$.   If there is an open subgroup $G'\subseteq \calG$ satisfying $G'\cap \SL_2(\Zhat)=W$ and $\det(G')=U'$, then the group $G:=\{g\in G' : \det(g)\in U\}$ will satisfy (\ref{T:G existence a}).   Also if the level of $G'$ divides an integer $m$, then the level of $G$ will divide the least common multiple of $m$ and the level of $U$.    So without loss of generality, we may assume that $U=U_N\times \prod_{\ell\nmid N} \ZZ_\ell^\times$.

We first assume there is a homomorphism $\beta\colon S \to \calG_N/W_N$ as in (\ref{T:G existence b}).    Recall that for odd $\ell$, $\ZZ_\ell^\times = C (1+\ell\ZZ_\ell)$ for a finite cyclic group $C$ of order $\ell-1$ and $1+\ell\ZZ_\ell \cong \ZZ_\ell$.  We have $\ZZ_2^\times =\pm (1+8\ZZ_2)$ and $1+8\ZZ_2\cong \ZZ_2$.   Since $U_N$ is an open subgroup of $\ZZ_N^\times=\prod_{\ell|N}\ZZ_\ell^\times$, we have an internal direct product of groups 
\begin{align} \label{E:internal S A A}
U_N = S \cdot A_1 \cdot A_2,
\end{align}
where $A_1$ is torsion-free $\ZZ_2$-module of rank at most $1$ and $A_2$ is isomorphic to a  product of an odd finite abelian group with $\prod_{\ell|N,\ell \neq 2} \ZZ_\ell$.   Fix a $u_1\in A_1$ that generates $A_1$ as a $\ZZ_2$-module and choose an element $g_1\in \calG_N$ for which $\det(g_1)=u_1$.  There is a unique continuous homomorphism $t_1\colon A_1 \to \calG_N/W_N$ such that $t_1(u_1)=g_1 W_N$.  Since $\det(g_1)=u_1$, we have $\det(t_1(a))=a$ for all $a\in A_1$.   The map $A_2\to A_2$, $a\mapsto a^2$ is an isomorphism of groups whose inverse we denote by $\psi$.   Define the homomorphism $t_2\colon A_2 \to \calG_N/W_N$, $a\mapsto (\psi(a)\cdot I) \cdot W_N$; it satisfies $\det(t_2(a))=\psi(a)^2=a$ for all $a\in A_2$.  Using the direct product (\ref{E:internal S A A}) with the maps $\beta\colon S\to \calG_N/W_N$, $t_1$ and $t_2$, we obtain a homomorphism 
$s_N\colon U_N \to \calG_N/W_N$ that satisfies $\det(s_N(a))=a$ for all $a\in U_N$.   For each prime $\ell\nmid N$, we define the homomorphism $s_\ell\colon \ZZ_\ell^\times \to \GL_2(\ZZ_\ell)/\SL_2(\ZZ_\ell)$ by $a\mapsto \left(\begin{smallmatrix} 1 & 0 \\0 & a\end{smallmatrix}\right)\cdot \SL_2(\ZZ_\ell)$.   The map $s_\ell$ is an isomorphism with inverse given by the determinant; in particular, $\det(s_\ell(a))=a$ for all $a\in \ZZ_\ell^\times$.   By combining $s_N$ with the $s_\ell$ for $\ell\nmid N$, we obtain a homomorphism
\[
s\colon U= U_N\times {\prod}_{\ell\nmid N} \ZZ_\ell^\times \to   \calG_N/W_N \times {\prod}_{\ell\nmid N} \GL_2(\ZZ_\ell)/\SL_2(\ZZ_\ell) = \calG/W
\]
that satisfies $\det(s(a))=a$ for all $a\in U$ (this uses that the levels of $\calG$ and $W$ are not divisible by any prime $\ell\nmid N$).  There is a unique subgroup $G$ of $\GL_2(\Zhat)$ with $G\supseteq W$ for which $G/W$ is equal to $s(U)\subseteq \calG/W$.  The group $G$ is closed in $\GL_2(\Zhat)$ since $s$ is continuous.  We have $\det(G)=\det(s(U))=U$.  Since $\det(s(a))=a$ for all $a\in U$, we deduce that $G\cap \SL_2(\Zhat)=W$.  Using that $\det(G)=U$ is open in $\Zhat^\times$ and $G\cap \SL_2(\Zhat)=W$ is open in $\SL_2(\Zhat)$, we find that $G$ is an open subgroup of $\GL_2(\Zhat)$.

We claim that the level of $G$ divides a power of $2$ times $N$.  From the definition of $s$ and our $s_\ell$, we find that $G\supseteq \{I\} \times \prod_{\ell\nmid N} \GL_2(\ZZ_\ell)$.  Therefore, the level of $G$ is not divisible by any prime $\ell\nmid N$.   We have $G_N \supseteq W_N \supseteq \{B\in \SL_2(\ZZ_N) : B \equiv I \pmod{N}\}$.  Using our choice of $t_2$, we find that $G_N$ contains the scalar matrices $c I$ for all $c$ in the set $\{a \in U_N : a \equiv 1 \pmod{N}\} \cap (\{1\} \times \prod_{\ell|N,\ell\neq 2} \ZZ_\ell^\times)$.  Therefore,
\[
G_N\supseteq {\prod}_{\ell|N, \ell=2} \{I\} \times {\prod}_{\ell|N,\ell\neq 2} H_\ell,
\]
where $H_\ell:=(1+\ell^{e_\ell} \ZZ_\ell) \{B\in \SL_2(\ZZ_\ell) :  B \equiv I \pmod{\ell^{e_\ell}} \}$ and $\ell^{e_\ell}$ is the largest power of $\ell$ dividing $N$.   Take any odd prime $\ell|N$.  To complete the proof of the claim, it suffices to show that $H_\ell \supseteq \{B\in \GL_2(\ZZ_\ell) : B\equiv I \pmod{\ell^{e_\ell}}\}$.   Take any $B\in \GL_2(\ZZ_\ell)$ with  $B\equiv I \pmod{\ell^{e_\ell}}$.   We have $\det(B) \in 1 +\ell^{e_\ell} \ZZ_\ell = (1 +\ell^{e_\ell} \ZZ_\ell)^2$, where the equality uses that $\ell$ is odd.   So there is a $u \in 1+\ell^{e_\ell}\ZZ_\ell$ for which $\det(B)=u^2$.   Define $C:=u^{-1} B \in \SL_2(\ZZ_\ell)$ and note that $C\equiv I \pmod{\ell^{e_\ell}}$.   So $B=u C$ is in $H_\ell$ and the claim follows.

This completes the proof that (\ref{T:G existence b}) implies (\ref{T:G existence a}).   After we prove the reverse implication, the final statement of the theorem will follow from the above claim.

Now suppose that there is a group $G\subseteq \calG$ satisfying the properties of (\ref{T:G existence a}).   Since $G\cap\SL_2(\Zhat)=W$ and $\det(G)=U$, the map $\det\colon G/W \to U$ is an isomorphism of groups whose inverse gives rise to a homomorphism  $s\colon U\to G/W \subseteq \calG/W$ that satisfies $\det(s(a))=a$ for all $a\in U$.   Let $\iota\colon U_N \to U_N\times \prod_{\ell\nmid N} \ZZ_\ell^\times =U$ be the homomorphism that is the identity on the $U_N$ factor and trivial on the $\ZZ_\ell^\times$ factors.   Define the homomorphism $s_N\colon U_N \hookrightarrow U \xrightarrow{s} \calG/W \to \calG_N/W_N$, where the first map is $\iota$ and the last map is the $N$-adic projection.  We have $\det(s_N(a))=a$ for all $a\in U_N$.  We have $S\subseteq U_N$, so $\beta:=s_N|_{S}\colon S \to \calG_N/W_N$ is a homomorphism satisfying $\det(\beta(a))=a$ for all $a\in S$.  This completes the proof that (\ref{T:G existence a}) implies (\ref{T:G existence b}).

Now suppose that (\ref{T:G existence c}) holds with a homomorphism $\beta\colon S\to \calG(N_1)/W(N_1)$.

For any $a\in S$, we claim that there is a $g\in \calG_N$ such that $\det(g)=a$ and such that the order of $a$ agrees with the order of $g W_N$ in $\calG_N/W_N$.   Take any $a\in S$ and denote its order by $e$.  We may assume that $e\geq 2$ since we can take $g=I$ when $e=1$. Choose a $g_1 \in \calG_N$ whose image modulo $N_1$ represents the coset $\beta(a)\in \calG(N_1)/W(N_1)$.   We have $\det(g_1)\equiv \det(\beta(a))\equiv a \pmod{N_1}$.   Since $N_1\equiv 0 \pmod{8}$ when $N_1$ is even, we have $(1+N_2\ZZ_N)^2=1+N_1 \ZZ_N$ where $N_2:=N_1=N$ if $N$ is odd and $N_2:=N_1/2$ if $N$ is even.  Therefore, $\det(g_1) a^{-1} =c^{-2}$ for some $c\in 1+N_2\ZZ_N$.  Define $g:=c g_1$; it lies in $\calG_N$ since $\calG$ contains the scalars in $\GL_2(\Zhat)$.  We have $\det(g)=c^2 \det(g_1)= a$.    The matrix $g^e$ thus lies in $\calG_N\cap \SL_2(\ZZ_N)$.   We have $\beta(a)^e= \beta(a^e)=1$, so reducing $g_1^e$ modulo $N_1$ gives the identity coset in $\calG(N_1)/W(N_1)$.  We have $c^e \equiv 1 \pmod{N_1}$ since $c\equiv 1 \pmod{N_2}$ and $e>1$ is a power of $2$.  Therefore, $g^e=c^e g_1^e$ modulo $N_1$ lies in $W(N_1)$.   Since $W$ has level dividing $N$ and $g^e\in \SL_2(\ZZ_N)$, we deduce that $g^e\in W_N$.  So $g W_N$ has order at most $e$ in $\calG_N/W_N$.  The order is exactly $e$ since $\det(g)=a$ has order $e$ in $U_N$.  This completes the proof of the claim.

Let $\{u_1,\ldots, u_r\}$ be a minimal generating set of the finite abelian group $S$ and let $e_i$ be the order of $u_i$.   In particular, we have an isomorphism $\ZZ/e_1\ZZ \times \cdots \times \ZZ/e_r \ZZ \to S$, $(n_1,\ldots, n_r)\mapsto u_1^{n_1}\cdots u_r^{n_r}$.  So to define a homomorphism $S\to \calG_N/W_N$ as in (\ref{T:G existence b}) we need only map each $u_i$ to an element in $\calG_N/W_N$ of order $e_i$ that has determinant $u_i$.   Therefore, (\ref{T:G existence b}) is true by the previous claim.   This proves that (\ref{T:G existence c}) implies (\ref{T:G existence b}).

Finally, it remains to show that (\ref{T:G existence b}) implies (\ref{T:G existence c}).   This is clear by taking any homomorphism as in (\ref{T:G existence b}) and composing with the quotient map $\calG_N/W_N\to \calG(N_1)/W(N_1)$.
\end{proof}

\begin{remark} \label{R:G existence}
We note that the condition (\ref{T:G existence c}) in Theorem~\ref{T:G existence} is straightforward to check in practice since all the groups involved are finite.  Now suppose that the conditions of Theorem~\ref{T:G existence} hold.   One way to find a group $G$ as in (\ref{T:G existence a}), if it exists, is to do a direct search modulo $2^i N$ for $i=0,1,\ldots$ (the proof of Theorem~\ref{T:G existence} also gives a constructive way when starting with homomorphism $\beta$ as in (\ref{T:G existence c})).
\end{remark}

\subsection{Proof of Theorem~\ref{T:nice supply of G}} \label{SS: nice supply proof}

The group $[\calG,\calG]$ is open in $\calG\cap \SL_2(\Zhat)$ by Lemma~\ref{L:openness of commutator}.  Let $N$ be the least common multiple of the levels of $\calG$ and $[\calG,\calG]$.  Note that $N$ is even since the level of $[\calG,\calG]$ is even.  Let $W$ be any of the finitely many of groups satisfying $[\calG,\calG]\subseteq W \subseteq \calG \cap \SL_2(\Zhat)$; its level divides $N$.   To prove the theorem, we need to show that one can find an open subgroup $G\subseteq \calG$ such that $G\cap \SL_2(\Zhat)=W$, $[\Zhat^\times: \det(G)]$ is a power of $2$, and the level of $G$ divides $N$ times a power of $2$.

There is an open subgroup $U_0 \subseteq \det(\calG_N) \subseteq \ZZ_N^\times$ with $U_0[2^\infty]=1$ such that $\det(\calG_N)$ is generated by its $2$-power torsion and $U_0$; we can may further choose $U_0$ so that it contains the open and torsion-free subgroup $\{a\in \ZZ_N^\times : a \equiv I \pmod{2N}\}$.   Let $S$ be a subgroup of the $2$-power torsion of $\det(\calG_N) \subseteq \ZZ_N^\times$, with maximal cardinality, for which condition (\ref{T:G existence b}) of Theorem~\ref{T:G existence} holds, cf.~Remark \ref{R:G existence}.    Define $U:=(S\cdot U_0) \times {\prod}_{\ell\nmid N} \ZZ_\ell^\times$;
it is an open subgroup of $\det(\calG)$ and $[\det(\calG):U]$ is a power of $2$.  The level of $U_0$, and hence also of $U$, divides $2N$.   By our choice of $U$, Theorem~\ref{T:G existence} implies that there is an open subgroup $G \subseteq \calG$ such that $G\cap\SL_2(\Zhat)=W$, $\det(G)=U$, and the level of $G$ divides a power of $2$ times $N$.   By computing in $\GL_2(\ZZ/2^i N \ZZ)$ for $i\geq 0$, we can find such a group $G$.

\section{Proof of the theorems from \S\ref{SS:agreeable closure overview}}  \label{S:proofs of agreeable closure overview}

Note that our proofs of the finiteness of $\calA_1$ and $\calA_2$ will be given in a manner so that it is clear that they are indeed computable.

\subsection{Proof of Theorem~\ref{T:main agreeable 1}} \label{SS:proof main agreeable}

First consider any agreeable subgroup $G$ of $\GL_2(\Zhat)$ for which $X_G$ has genus at most $1$.   Define $H:= G \cap \SL_2(\Zhat)$; it is an open subgroup of $\SL_2(\Zhat)$.   The group $H$ contains $-I$ since $G$ contains all the scalars in $\GL_2(\Zhat)$.  Let $N$ be the level of $H$.   Define the congruence subgroup $\Gamma_G:=\SL_2(\ZZ) \cap H=\SL_2(\ZZ)\cap G$ of $\SL_2(\ZZ)$; equivalently, it is the congruence subgroup of level $N$ whose image modulo $N$ agrees with the image of $H$ modulo $N$.   In particular, $H$ can be recovered from $\Gamma_G$ and we have $-I \in \Gamma_G$.   The genus of $\Gamma_G$ agrees with the genus of $G$, see Remark~\ref{R:modular curve follow-up}, and hence is at most $1$.

There are only finitely many congruence subgroups of $\SL_2(\ZZ)$ that have genus at most $1$ and contain $-I$, cf.~\cite{MR2016709}.   Moreover, all such congruence subgroups are explicitly given in \cite{MR2016709} up to conjugacy in $\GL_2(\ZZ)$; there are $121$ and $163$ conjugacy classes with genus $0$ and $1$, respectively.  

Now fix one of the finitely many congruence subgroups $\Gamma$ of $\SL_2(\ZZ)$ that have genus at most $1$ and contains $-I$.    Let $H$ be the open subgroup of $\SL_2(\Zhat)$ corresponding to $\Gamma$.    We have $-I \in H$.  In the rest of the proof, we will explain how to compute the (finitely many) agreeable subgroups $G$ of $\GL_2(\Zhat)$ for which $G\cap\SL_2(\Zhat)=H$.   The finiteness of $\calA_1'$, and hence also of $\calA_1$, will be obtained by varying over the finite many $\Gamma$.    Let $N$ be the level of $\Gamma$; it is also the level of $H$.   Define the integer $N_1:= 2 \operatorname{lcm}(N,12)$.   

\begin{lemma} \label{L:searching for agreeable}
For any agreeable subgroup $G$ of $\GL_2(\Zhat)$ with $G\cap \SL_2(\Zhat)=H$, the level of $G$ divides $N_1$.
\end{lemma}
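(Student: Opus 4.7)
The plan is to bound the $\ell$-adic part of the level of $G$ for each prime $\ell$ separately and verify that the product of these local bounds divides $N_1$. Writing $e_\ell = v_\ell(N)$, one checks directly that $N_1 = 2^{\max(e_2+1,3)} \cdot 3^{\max(e_3,1)} \cdot \prod_{\ell\geq 5} \ell^{e_\ell}$, so it suffices to bound the $\ell$-part of the level of $G$ by the corresponding factor for each prime $\ell$. The two main inputs are the agreeability of $G$ (which forces every prime dividing the level of $G$ to divide the level of $[G,G]$) and Lemma~\ref{L:level by adjoining scalars} (which bounds the level of a subgroup of $\GL_2(\ZZ_\ell)$ containing both $\ZZ_\ell^\times I$ and a prescribed $\SL_2$-subgroup). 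Throughout I would set $H_\ell := G_\ell \cap \SL_2(\ZZ_\ell)$, which has level $\ell^{e_\ell}$ in $\SL_2(\ZZ_\ell)$.

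For primes $\ell > 3$ with $\ell \nmid N$, I would first argue that $\ell$ does not divide the level of $G$ at all: here $H_\ell = \SL_2(\ZZ_\ell)$, so $G_\ell \supseteq \SL_2(\ZZ_\ell)$ and hence $[G_\ell,G_\ell] \supseteq [\SL_2(\ZZ_\ell),\SL_2(\ZZ_\ell)] = \SL_2(\ZZ_\ell)$ by Lemma~\ref{L:comm}(\ref{L:comm ii}); thus $\ell$ does not divide the level of $[G,G]$, and agreeability then eliminates $\ell$ from the level of $G$. For the remaining primes -- that is, $\ell \in \{2,3\}$ together with the primes $\ell \geq 5$ dividing $N$ -- I would apply Lemma~\ref{L:level by adjoining scalars} to the inclusion $H_\ell \subseteq G_\ell$: taking the ambient modulus to be $\ell^{\max(e_\ell,1)}$ for odd $\ell$ and $2^{\max(e_2,2)}$ for $\ell=2$ arranges the hypothesis $N' \not\equiv 2 \pmod 4$ of that lemma, and yields that $\ZZ_\ell^\times H_\ell$ -- and hence $G_\ell$, since $\ZZ_\ell^\times I \subseteq G_\ell$ -- has level dividing $3^{\max(e_3,1)}$ at $\ell=3$, dividing $\ell^{e_\ell}$ at $\ell\geq 5$, and dividing $2^{\max(e_2+1,3)}$ at $\ell=2$.

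Assembling these local bounds gives that the level of $G$ divides $N_1$, which completes the proof. The one point that will require genuine care, and which I expect to be the main obstacle, is the case $\ell = 2$ with $e_2 \leq 1$: the hypothesis $N' \not\equiv 2 \pmod 4$ of Lemma~\ref{L:level by adjoining scalars} forces one to enlarge the modulus from $2^{e_2}$ to $4$, which is exactly what produces the factor of $8$ in the $2$-adic bound and, in turn, the $\operatorname{lcm}$ with $12$ appearing in the definition $N_1 = 2\operatorname{lcm}(N,12)$.
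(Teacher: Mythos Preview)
Your argument has a genuine gap, and it is the same error appearing twice: you conflate a bound on the level of the $\ell$-adic \emph{projection} $G_\ell$ (or $[G,G]_\ell$) with a bound on the $\ell$-part of the level of $G$ (or $[G,G]$) itself. These are not the same thing. An open subgroup of $\GL_2(\Zhat)$ need not be the product of its $\ell$-adic projections, so knowing that each $G_\ell$ has level dividing $\ell^{v_\ell(N_1)}$ does \emph{not} let you conclude that $G$ has level dividing $N_1$; there can be entanglement between distinct primes that forces the global level higher than the product of the local ones. The same problem occurs in your first step: from $[G_\ell,G_\ell]=\SL_2(\ZZ_\ell)$ you only get $[G,G]_\ell=\SL_2(\ZZ_\ell)$, and this does not by itself show that $\ell$ is coprime to the level of $[G,G]$. (For $\ell>5$ you could instead cite Lemma~\ref{L:old agreeable}, which does the Goursat-type work needed; but your argument as written, and in particular the case $\ell=5$, does not go through.)

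The paper sidesteps both issues by working globally rather than prime by prime. Since $H$ has level $N\mid N_0:=\operatorname{lcm}(N,12)$, one has a \emph{genuine} product decomposition $H=H_{N_0}\times\prod_{\ell\nmid N_0}\SL_2(\ZZ_\ell)$, whence $[H,H]=[H_{N_0},H_{N_0}]\times\prod_{\ell\nmid N_0}\SL_2(\ZZ_\ell)$; the containment $[G,G]\supseteq[H,H]$ then bounds the primes of the level of $[G,G]$, and agreeability transfers this to $G$. For the remaining bound, one applies Lemma~\ref{L:level by adjoining scalars} \emph{once}, to $\ZZ_{N_0}^\times H_{N_0}\subseteq\GL_2(\ZZ_{N_0})$ with modulus $N_0$ (note $4\mid N_0$), obtaining level dividing $2N_0=N_1$; since $G_{N_1}\supseteq\ZZ_{N_1}^\times H_{N_1}$, this finishes the proof. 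Your local applications of the lemma are individually correct but cannot be ``assembled'' without an argument ruling out cross-prime entanglement, which is exactly what the global application provides.
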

\begin{proof} 
Define $N_0=\operatorname{lcm}(N,12)$.  We have $H=H_{N_0} \times \prod_{\ell\nmid N_0} \SL_2(\ZZ_\ell)$ since the level of $H$ divides $N_0$.  Hence $[H,H]=[H_{N_0},H_{N_0}] \times \prod_{\ell\nmid N_0} \SL_2(\ZZ_\ell)$ by Lemma~\ref{L:comm}(\ref{L:comm ii}).    In particular, the level of $[H,H]$ is divisible only by primes dividing $N_0$; equivalently, dividing $N_1=2N_0$.
Consider any agreeable subgroup $G$ of $\GL_2(\Zhat)$ for which $G\cap \SL_2(\Zhat)=H$.   We have $\SL_2(\Zhat)\supseteq [G,G]\supseteq [H,H]$, so any prime dividing the level of $[G,G]$ must also divide $N_1$.    Since $G$ is agreeable, we have $G=G_{N_1} \times \prod_{\ell \nmid N_1} \GL_2(\ZZ_\ell)$.  It remains to show that the level of $G_{N_1} \subseteq \GL_2(\ZZ_{N_1})$ divides $N_1$.   From Lemma~\ref{L:level by adjoining scalars} and our choice of $N_1$, we find that $\ZZ_{N_1}^\times  H_{N_1}$ is an open subgroup of $\GL_2(\ZZ_{N_1})$ whose level divides $N_1$.  We have $\ZZ_{N_1}^\times  H_{N_1} \subseteq G_{N_1} $ since $G$ contains $H$ and $\Zhat^\times\cdot I$, and hence the level of $G_{N_1}$ divides $N_1$.
\end{proof}

We now describe how to compute all the agreeable subgroups $G$ of $\GL_2(\Zhat)$ for which $G\cap \SL_2(\Zhat)=H$.   By Lemma~\ref{L:searching for agreeable}, the level of such a group $G$ divides $N_1$.   So we first look for subgroups $\bbar{G}$ of $\GL_2(\ZZ/N_1\ZZ)$ for which $\bbar{G} \cap \SL_2(\ZZ/N_1\ZZ)$ equals the the image of $H$ modulo $N_1$.    There are only finitely many such groups $\bbar{G}$ which give rise to finite many candidate groups $G$ of $\GL_2(\Zhat)$ which satisfy $G\cap \SL_2(\Zhat)=H$.   We can then check which of the candidates $G$ are agreeable.

Finally, suppose there is a group $G \in \calA_1$ and a prime $\ell>19$ for which $\ell$ divides the level of $[G,G]$.  By Lemma~\ref{L:old agreeable}, we have $G_\ell \not\supseteq \SL_2(\ZZ_\ell)$ and hence the level of $G\cap \SL_2(\Zhat)\subseteq \SL_2(\Zhat)$ is divisible by $\ell$.  From our argument above, we find that there is a congruence subgroup $\Gamma \subseteq \SL_2(\ZZ)$ of genus at most $1$ for which $\ell$ divides the level of $\Gamma$.  However, the classification of low genus congruence subgroups in \cite{MR2016709} shows that $19$ is the largest possible prime divisor of the level of a congruence subgroup of genus at most $1$.   Therefore, the level of $[G,G]$ is not divisible by any prime $\ell >19$ for all $G\in \calA_1$.

\subsection{Proof of Theorem~\ref{T:main agreeable 2}} \label{SS:proof main agreeable 2}

First consider any agreeable subgroup $G$ of $\GL_2(\Zhat)$ with genus at least $2$ that satisfies $G_\ell \supseteq \SL_2(\ZZ_\ell)$ for all $\ell>19$.   By Lemma~\ref{L:old agreeable}, the level of $[G,G]$ is not divisible by any prime $\ell>19$.   Since $G$ is agreeable, its level is not divisible by any prime $\ell>19$.   Choose a maximal agreeable group $G\subseteq G' \subseteq \GL_2(\Zhat)$ for which $G'$ has genus at least $2$.   Since the level of $G'$ divides the level of $G$, we deduce that $G'$ lies in $\calA_2'$.  This proves part (\ref{T:main agreeable 2 ii}).

Now take any group $G\in \calA_2'$.  Choose a minimal agreeable group $\calG$ of genus at most $1$ that satisfies $G\subseteq \calG$.  Using the definition of $\calA_1'$ and $\calA_2'$, we find that $\calG\in \calA_1'$ and that $G$ is a (proper) maximal agreeable subgroup of $\calG$.   Since we are only interested in groups up to conjugacy, we may assume that $\calG \in \calA_1$.  Since $\calA_1$ is finite by Theorem~\ref{T:main agreeable 1}, to prove the finiteness of $\calA_2$ it suffices to show that every group $\calG\in \calA_1$ has only finitely many maximal agreeable subgroups whose level is not divisible by any prime $\ell>19$.

Fix a group $\calG \in \calA_1$ and let $M$ be the product of the primes that divide the level of $[\calG,\calG]$.  We have $\ell\nmid M$ for all $\ell>19$ by Theorem~\ref{T:main agreeable 1}.   Since $\calG$ is agreeable, the level of $\calG$ is also not divisible by any prime $\ell>19$.   We now consider the maximal agreeable subgroups $G$ of $\calG$ as classified in \S\ref{SS:maximal agreeable}.   We want to show that there are only finitely many of each type and make clear that they are computable.
  
  If $3\nmid M$, we obtain a single maximal agreeable subgroup as in Lemma~\ref{L:obvious maximal}(\ref{L:obvious maximal iii}).

Take any prime $p\nmid M$ with $p \leq 19$.   The maximal open subgroups $B\subseteq \GL_2(\ZZ_p)$ with $\ZZ_p^\times I \subseteq B$ and $\SL_2(\ZZ_p) \not\subseteq B$, give rise to the maximal agreeable subgroups of $\calG$ as in Lemma~\ref{L:obvious maximal}(\ref{L:obvious maximal ii}).   By Lemma~\ref{L:Serre mod ell to ell-adic}, the group $B$ have level $p$ and are thus easy to enumerate.

A maximal agreeable subgroup of $\calG$ as given in Lemma~\ref{L:obvious maximal}(\ref{L:obvious maximal i}) arises from a maximal proper open subgroup $B$ of $\calG_M$ that contain $\ZZ_M^\times I$.   Let $N$ be the least common multiple of $4$, $M$, and the level of $\calG$.  Lemma~\ref{L:how to look for maximal} implies that the level of $B$ divides $N \ell$ for some prime $\ell | N$.  So one need only look for maximal subgroups of the image of $G$ in $\GL_2(\ZZ/N\ell\ZZ)$ for each $\ell|N$.

Now consider any prime $p\in\{3,5\}$ that does not divide $M$.   We now consider maximal agreeable subgroups of $\calG$ as described in Lemma~\ref{L:maximal leftovers}.   By Lemma~\ref{L:maximal leftovers}, it suffices to compute the open normal subgroups of $\calG_M$ for which the quotient is isomorphic to a group $Q \in \{\mathfrak{S}_p,\mathfrak{A}_p\}$ where $Q\neq \mathfrak{A}_p$ when $p=3$.     Let $N$ be the least common multiple of $M$ and the level of $\calG$; it has the same prime divisors as $M$.   For any continuous and surjective homomorphism $\calG_M \twoheadrightarrow Q$, the kernel contains all $g\in \calG_M$ with $g\equiv I \pmod{N}$ since $Q$ contains no normal $\ell$-groups for all $\ell\nmid M$.  Therefore, one need only look for normal subgroup of the image of $\calG_M$ modulo $N$ that have $Q$ as a quotient group.

\subsection{Proof of Theorem~\ref{T:loose agreeable}}

Let $\calG$ be the agreeable closure of $G_E$.  Proposition~\ref{P:key property} implies that $G$ is a minimal element of $\calA_1'$, with respect to inclusion, for which $G_E$ is conjugate in $\GL_2(\Zhat)$ to a subgroup of $G$.   By conjugating $G_E$, we may assume that $G_E \subseteq G$.   Since $\calG$ is the minimal agreeable subgroup containing $G_E$, we have $G_E \subseteq \calG \subseteq G$. If $\calG$ has genus at most $1$, then $\calG=G$ since otherwise $G$ is not a minimal element of $\calA_1'$ with respect to inclusion.  We can now assume that $\calG$ has genus at least $2$.

We claim that $\calG_\ell\supseteq \SL_2(\ZZ_\ell)$ for all primes $\ell>19$.   Take any prime $\ell>19$.  By assumption, we have $\rho_{E,\ell}(\Gal_K)\supseteq \SL_2(\ZZ/\ell\ZZ)$ and hence $(G_E)_\ell \supseteq \SL_2(\ZZ_\ell)$ by Lemma~\ref{L:Serre mod ell to ell-adic}.   The claim follows since $G_E\subseteq \calG$ and hence $(G_E)_\ell \subseteq \calG_\ell$.

Theorem~\ref{T:main agreeable 2}(\ref{T:main agreeable 2 ii}) implies that $\calG$, and hence also $G_E$, is conjugate in $\GL_2(\Zhat)$ to a subgroup of some $G'\in \calA_2$.  Proposition~\ref{P:key property} implies that $K_{G'} \subseteq K$ and $j_E \in \pi_{G'}(X_{G'}(K))\subseteq J_K$, where the last inclusion uses that $G'$ lies in $\calA_2$.   Since $j_E\notin J_K$ by assumption, the case where $\calG$ has genus at least $2$ does not occur.

\begin{bibdiv}
\begin{biblist}

\bib{Magma}{article}{
      author={Bosma, Wieb},
      author={Cannon, John},
      author={Playoust, Catherine},
       title={The {M}agma algebra system. {I}. {T}he user language},
        date={1997},
     journal={J. Symbolic Comput.},
      volume={24},
      number={3-4},
       pages={235\ndash 265},
        note={Computational algebra and number theory (London, 1993)},
}

\bib{MR2016709}{article}{
      author={Cummins, C.~J.},
      author={Pauli, S.},
       title={Congruence subgroups of {${\rm PSL}(2,{\mathbb Z})$} of genus less
  than or equal to 24},
        date={2003},
        ISSN={1058-6458},
     journal={Experiment. Math.},
      volume={12},
      number={2},
       pages={243\ndash 255},
%         url={http://projecteuclid.org/getRecord?id=euclid.em/1067634734},
%      review={\MR{MR2016709 (2004i:11037)}},
}

\bib{MR1998390}{article}{
   author={Kawamura, Takashi},
   title={The effective surjectivity of mod $l$ Galois representations of 1-
   and 2-dimensional abelian varieties with trivial endomorphism ring},
   journal={Comment. Math. Helv.},
   volume={78},
   date={2003},
   number={3},
   pages={486--493},
   issn={0010-2571},
%   review={\MR{1998390}},
%   doi={10.1007/s00014-003-0768-7},
}

\bib{MR3161774}{article}{
   author={Larson, Eric},
   author={Vaintrob, Dmitry},
   title={On the surjectivity of Galois representations associated to
   elliptic curves over number fields},
   journal={Bull. Lond. Math. Soc.},
   volume={46},
   date={2014},
   number={1},
   pages={197--209},
   issn={0024-6093},
%   review={\MR{3161774}},
%   doi={10.1112/blms/bdt081},
}

\bib{lmfdb}{misc}{
	label={LMFDB},
      author={{LMFDB Collaboration}, The},
       title={\href{https://www.lmfdb.org}{The $L$-functions and modular forms
  database}},
%        date={2021},
        note={Online database, accessed June 2024},
}

\bib{Ribet-76}{article}{
   author={Ribet, Kenneth A.},
%   author={Ribet, K. A.},
   title={Galois action on division points of abelian varieties with real
   multiplications},
   journal={Amer. J. Math.},
   volume={98},
   date={1976},
   number={3},
   pages={751--804},
%   issn={0002-9327},
%   review={\MR{0457455 (56 \#15660)}},
}

\bib{Serre-Inv72}{article}{
   %author={Serre, J.-P.},
   author={Serre, Jean-Pierre},
   title={Propri\'et\'es galoisiennes des points d'ordre fini des courbes
   elliptiques},
%   language={French},
   journal={Invent. Math.},
   volume={15},
   date={1972},
   number={4},
   pages={259--331},
%   issn={0020-9910},
%   review={\MR{0387283 (52 \sharp 8126)}},
}

\bib{MR644559}{article}{
     author={Serre, Jean-Pierre},
       title={Quelques applications du th\'eor\`eme de densit\'e de
  {C}hebotarev},
        date={1981},
     journal={Inst. Hautes \'Etudes Sci. Publ. Math.},
      number={54},
       pages={323\ndash 401},
}

\bib{Serre-abelian}{book}{
    author={Serre, Jean-Pierre},
 %author={Serre, Jean-Pierre},
   title={Abelian $l$-adic representations and elliptic curves},
   series={Research Notes in Mathematics},
   volume={7},
   note={With the collaboration of Willem Kuyk and John Labute;
   Revised reprint of the 1968 original},
   publisher={A K Peters Ltd.},
   place={Wellesley, MA},
   date={1998},
   pages={199},
%   isbn={1-56881-077-6},
%   review={\MR{1484415 (98g:11066)}},
}

\bib{MR3482279}{article}{
   author={Sutherland, Andrew V.},
   title={Computing images of Galois representations attached to elliptic
   curves},
   journal={Forum Math. Sigma},
   volume={4},
   date={2016},
   pages={Paper No. e4, 79},
   review={\MR{3482279}},
   %doi={10.1017/fms.2015.33},
}

\bib{MR2721742}{article}{
   author={Zywina, David},
   title={Elliptic curves with maximal Galois action on their torsion
   points},
   journal={Bull. Lond. Math. Soc.},
   volume={42},
   date={2010},
   number={5},
   pages={811--826},
   issn={0024-6093},
%   review={\MR{2721742}},
%   doi={10.1112/blms/bdq039},
}

\bib{possibleindices}{article}{
	author={Zywina, David},
	title={Possible indices for the Galois image of elliptic curves over $\QQ$},
	date={2015},
	note={\href{https://arxiv.org/abs/1508.07663}{arXiv:1508.07663} [math.NT]},
}

\bib{surjectivityalgorithm}{article}{
	author={Zywina, David},
	title={On the surjectivity of mod $\ell$ representations associated to elliptic curves},
   journal={Bull. Lond. Math. Soc.},
   volume={54},
   date={2022},
   number={6},
   pages={2045--2584},
}

\bib{openimage}{article}{
	author={Zywina, David},
	title={Explicit open images for elliptic curves over $\QQ$},
	date={2022},
	note={\href{https://arxiv.org/abs/2206.14959}{arXiv:2206.14959} [math.NT]},
}

\bib{github}{misc}{
author={Zywina, David},
date={2022},
title={\emph{GitHub repository related to} Explicit open images for elliptic curves over $\QQ$},
note={\url{https://github.com/davidzywina/OpenImage}},
}

\bib{newgithub}{misc}{
author={Zywina, David},
date={2024},
title={\emph{GitHub repository related to} Open image computations for elliptic curves over number fields},
note={\url{https://github.com/davidzywina/AgreeableGroups}},
}

\end{biblist}
\end{bibdiv}

\end{document}